%
%

\documentclass[leqno,12pt,a4paper]{article}
\usepackage[dvips]{graphicx}
\usepackage{amssymb}
\usepackage{color}
\usepackage{theorem}
\newcommand{\pref}[1]{(\ref{#1})}
\newcommand{\be}{\begin{equation}}
\newcommand{\ee}{\end{equation}}
\newcommand{\qed}{{\unskip\nobreak\hfil\penalty50\quad\null\nobreak\hfil
	$\square$\parfillskip0pt\finalhyphendemerits0\par\medskip}}
\newcommand{\zume}{\!\!\!}

\newcommand{\vece}{\mbox{\boldmath $ e $}}

\newcommand{\veco}{\mbox{\boldmath $ o $}}

\newcommand{\vecu}{\mbox{\boldmath $ u $}}

\newcommand{\vecx}{\mbox{\boldmath $ x $}}

\newtheorem{thm}{Theorem}[section]
\newtheorem{prop}{Proposition}[section]
\newtheorem{lem}{Lemma}[section]
\newtheorem{cor}{Corollary}[section]

\newtheorem{fact}{Fact}[section]

\theorembodyfont{\rmfamily}
\newtheorem{rem}{Remark}[section]
\newtheorem{proof}{\normalfont\itshape Proof.}

\title{On the global existence of generalized rotational hypersurfaces with prescribed mean curvature in the Euclidean spaces,
II}
\author{Takeyuki Nagasawa}
\date{}

\begin{document}
\maketitle
\begin{abstract}
In the previous paper,
it has been proved that the generalized rotational hypersurfaces of $ O( n-1 ) $-type and $ O ( \ell + 1 ) \times O( m+1 ) $-type,
for which the mean curvature is any prescribed continuous function.
This paper is a sequel,
and a similar existence result is shown for any type.
\par\noindent
{\em Keywords:} mean curvature,
generalized rotational hypersurfaces.
\par\noindent
{\footnotesize {\em 2010 Mathematics Subject Classification} 53C42 (primary),
34B16 (secondary).}
\end{abstract}
\section{Introduction}
\par
{Let $ H $ be any continuous function on the real line.} 
The purpose of this work is to construct generalized rotational hypersurfaces with any prescribed continuous mean curvature $ H $.
A generalized rotational hypersurface $ M $ in  $ n $-dimensional Euclidean space $\mathbb{R}^{n}$, for $ n \geqq 3 $, is defined via a compact Lie group $ G $ and its representation to $\mathbb{R}^{n}$,
{\em i.e.},
$ M $ is invariant under an isometric transformation group $( G ,\mathbb{R}^n ) $ with codimension two principal orbit type.
Such transformation groups $ (G , \mathbb{R}^n )$ are well  known and they were classified by Hsiang \cite{hsjdg} into five types:
\begin{description}
\item[{\rm Type I:}]	$ ( G , \mathbb{R}^n ) = ( O( n-1 ) , \mathbb{R}^n ) $.
\item[{\rm Type II:}]	$ ( G , \mathbb{R}^n ) = ( O( \ell + 1 ) \times O( m + 1 ) , \mathbb{R}^{ \ell + m + 2 } ) $.
\item[{\rm Type III:}]	$ ( G , \mathbb{R}^n ) = ( SO(3) , \mathbb{R}^5 ) $,
$ ( SU(3) , \mathbb{R}^8 ) $,
$ ( Sp (3) , \mathbb{R}^{14} ) $,
$ ( F_4 , \mathbb{R}^{26} ) $.
\item[{\rm Type IV:}]	$ ( G , \mathbb{R}^n ) = ( SO(5), \mathbb{R}^{10} ) $,
$ ( U(5) , \mathbb{R}^{20} ) $,
$ ( U(1) \times Spin (10) , \mathbb{R}^{32} ) $,\\[1mm] 
$ ( SO (2) \times SO (m) , \mathbb{R}^{2m} ) $,
$ ( S ( U(2) \times  U(m) ) , \mathbb{R}^{4m} ) $,
$ ( Sp (2) \times Sp (m) , \mathbb{R}^{8m} ) ) $.
\item[{\rm Type V:}]	$ ( G , \mathbb{R}^n ) = ( SO (4) , \mathbb{R}^8 ) $,
$ ( G_2 , \mathbb{R}^{14} ) $.
\end{description}
\par
 Hypersurfaces of Type I were constructed by Kenmotsu \cite{K} in the case $ n = 3 $ when $ H $ is a continuous function,
and   by Dorfmeister-Kenmotsu \cite{DK} for $ n \geqq 4 $, in the case when $ H $ is an analytic function.
In the previous paper \cite{KenNaga} it is shown the general existence of hypersurfaces of Type I and Type II, with  any continuous mean curvature $ H $.
In this paper, we get a similar result in all of the five Types I--V.
\par
Let $ ( x(s) , y(s) ) $ be the generating curve with archlength parameter $ s $:
\be
	( x^\prime (s) )^2 + ( y^\prime (s) )^2 = 1 .
	\label{archlength}
\ee
For each Type, another equation determines the generalized rotational hypersurfaces with mean curvature $ H $; these can be found in \cite{hsjdg}. We shall describe the equations uniformly by introducing the following notation:
\[
	\vecx (s) = \, ^t \! ( x (s) , y (s) ) ,
	\quad
	\vecx^\prime (s) = \, ^t \! ( x^\prime (s) , y^\prime (s) ) ,
	\quad
	\vecx^{\prime\prime} (s)^\bot = \, ^t \! ( - y^{ \prime \prime } (s) , x^{ \prime \prime } (s) )
	,
\]
and
\[
	\vece ( \phi ) = \, ^t \! ( \cos \phi , \sin \phi ) ,
	\quad
	\vece ( \phi )^\bot = \, ^t \! ( - \sin \phi , \cos \phi ) ,
\]

\begin{fact}
For each type of generalized rotational hypersurface, there exist a finite set $ J $,
angles $ \phi_j \in ( - \frac \pi 2 , \frac \pi 2 ] $,
and natural numbers $ n_j \in \mathbb{N} $ for $ j \in J $, such that the equation of the generalized rotational hypersurface can be described by
\be
	\vecx^{ \prime \prime } (s)^\bot \cdot \vecx^\prime (s)
	+
	\sum_{ j \in J }
	\frac { n_j \vece ( \phi_j ) \cdot \vecx^\prime (s) }
	{ \vece ( \phi_j )^\bot \cdot \vecx (s) }
	=
	( n - 1 ) H (s)
	,
	\quad
	\| \vecx^\prime (s) \|^2 = 1 ,
	\label{equationx}
\ee
where
\[
	\sum_{ j \in J } n_j = n - 2
\]
is the number of principal curvatures, not including the curvature of the generating curve.
For each type above,
the set $ J $, the 
angles $ \phi_j $,
and the natural numbers $ n_j $ for $ j \in J$ are given as follows:
\begin{description}
\item[{\rm Type I:}]
	$ J = \{ 0 \} $,
	$ \phi_0 = 0 $,
	$ n_0 = n - 2 $.
\item[{\rm Type II:}]
	$ J = \{ 0 , 1 \} $,
	$ \phi_1 = \frac \pi 2 $,
	$ \phi_0 = 0 $,
	$ n_0 = m $,
	$ n_1 = \ell $.
\item[{\rm Type III:}]
	$ J = \{ - 1 , 0 , 1 \} $
	$ \phi_1 = \frac \pi 3 $,
	$ \phi_j = j \phi_1 $.
	And $ n_j \equiv 1 $,
	$ 2 $,
	$ 4 $,
	or $ 8 $ for $ ( SO(3) , \mathbb{R}^5 ) $,
$ ( SU(3) , \mathbb{R}^8 ) $,
$ ( Sp (3) , \mathbb{R}^{14} ) $,
or $ ( F_4 , \mathbb{R}^{26} ) $ respectively.
\item[{\rm Type IV:}]
	$ J = \{ - 1 , 0 , 1 , 2 \} $,
	$ \phi_1 = \frac \pi 4 $,
	$ \phi_j = j \phi_1 $.
	And $ n_{ \pm 1 } = \ell $,
	$ n_0 = n_2 = k $,
	where
	$ ( k, \ell ) = ( 2,2 ) $,
	$ ( 5,4 ) $,
	$ ( 9,6 ) $,
	$ ( m-2 ,1 ) $,
	$ ( 2m-3 , 2 ) $,
	$ ( 4m-5 , 4 ) $ for $ ( SO(5), \mathbb{R}^{10} ) $,
	$ ( U(5) , \mathbb{R}^{20} ) $,
	$ ( U(1) \times Sp (10) , \mathbb{R}^{32} ) $,
	$ ( SO (2) \times SO (m) , \mathbb{R}^{2m} ) $,
	$ ( S ( U(2) \times  U(m) ) , \mathbb{R}^{4m} ) $,
	$ ( Sp (2) \times Sp (m) , \mathbb{R}^{8m} ) ) $ respectively.
\item[{\rm Type V:}]
	$ J = \{ -2 , -1 , 0 , 1 , 2 , 3 \} $,
	$ \phi_1 = \frac \pi 6 $,
	$ \phi_j = j \phi_1 $.
	And $ n_j \equiv 1 $,
	or $ 2 $ for $ ( SO (4) , \mathbb{R}^8 ) $,
	or $ ( G_2 , \mathbb{R}^{14} ) $ respectively.
\end{description}
\end{fact}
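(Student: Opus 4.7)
The Fact is a uniform reformulation of the mean-curvature equations derived case by case by Hsiang in \cite{hsjdg}, so the plan is a case-by-case verification that each of those equations matches the form (\ref{equationx}) for the specified choice of $J$, $\phi_j$, and $n_j$.

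First I would interpret the terms of (\ref{equationx}) geometrically. From $\| \vecx'(s) \|^2 = 1$ one has $\vecx''(s) \perp \vecx'(s)$, so $\vecx''(s)^\bot \cdot \vecx'(s)$ is the signed curvature of the generating curve and represents the principal curvature of $M$ tangent to the profile. The remaining $n-2$ principal curvatures arise from the slice representation of $G$ at the principal orbit through $\vecx(s)$: each isotypic summand is associated to a direction $\vece(\phi_j)$ in the orbit plane, and a direct computation shows that the corresponding principal curvature of $M$ equals $\vece(\phi_j) \cdot \vecx'(s)/(\vece(\phi_j)^\bot \cdot \vecx(s))$, with multiplicity $n_j$ equal to the dimension of that summand. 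Summing all principal curvatures and setting the result equal to $(n-1)H$ yields (\ref{equationx}); the codimension-two condition on the principal orbit forces $\sum_j n_j = n - 2$.

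The data is then matched type by type. For Type I the principal orbit is a single $(n-2)$-sphere of radius $y(s)$ in the hyperplane perpendicular to the $x$-axis, yielding $\phi_0 = 0$ with multiplicity $n - 2$. For Type II the principal orbit is a product $S^\ell \times S^m$ of spheres of radii $x(s)$ and $y(s)$, producing $\phi_0 = 0$ and $\phi_1 = \pi/2$ with multiplicities $m$ and $\ell$; this case was treated explicitly in \cite{KenNaga}. For Types III--V the principal orbits restricted to a unit sphere are isoparametric hypersurfaces with $g = 3,4,6$ distinct principal curvatures, and the M\"unzner structure theorem forces consecutive focal directions in the orbit plane to be separated by an angle $\pi/g$. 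This gives $\phi_j = j\phi_1$ with $\phi_1 = \pi/3, \pi/4, \pi/6$ respectively, and the multiplicities $n_j$ for each of the fifteen exceptional examples are read off Hsiang's tabulation of the slice representations. The main difficulty is organizational rather than conceptual: for each exceptional example one must trace the Lie-theoretic weight data to the geometric angles and multiplicities in the orbit plane and verify $\sum_j n_j = n - 2$ against the ambient dimension, but since all of this is already implicit in \cite{hsjdg} the proof reduces to a table comparison.
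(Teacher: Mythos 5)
Your proposal is correct, but it travels a genuinely different road from the paper. The paper's own justification is purely algebraic: it takes Hsiang's explicit ODEs from \cite{hsjdg} and simply rewrites each term in the $\vece(\phi_j)$, $\vece(\phi_j)^\bot$ notation, carrying this out in detail only for Type II (quoting the Type II equation from \cite{KenNaga}, then matching $-\ell y'/x$ to $n_1 \vece(\phi_1)\cdot\vecx'/\vece(\phi_1)^\bot\cdot\vecx$ with $\phi_1 = \pi/2$, and so on). You instead give the geometric underpinning: you identify $\vecx''{}^\bot\cdot\vecx'$ as the profile curvature and the fraction $\vece(\phi_j)\cdot\vecx'/\vece(\phi_j)^\bot\cdot\vecx$ as the orbital principal curvature attached to the $j$-th isotypic summand with multiplicity $n_j$, and you derive the even spacing $\phi_{j+1}-\phi_j = \pi/g$ for Types III--V from M\"unzner's structure theorem for isoparametric hypersurfaces with $g = 3, 4, 6$. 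That explains \emph{why} a uniform form of the stated shape must exist, which the paper's term-matching leaves implicit, and it is the cleaner account of the $\phi_j$'s; the tradeoff is that you leave the crucial ``direct computation'' (principal curvature equals $\vece(\phi_j)\cdot\vecx'/\vece(\phi_j)^\bot\cdot\vecx$) as a claim rather than a verification, whereas the paper's mechanical rewriting is closer to self-contained for the one type it treats. Both routes ultimately terminate in the same place --- a comparison against Hsiang's tables of slice representations to read off the $n_j$ and verify $\sum n_j = n-2$ --- so for a full proof you would still need to supply the term-by-term calculation or at least the curvature identity in detail.
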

\par
We get these by direct calculations.
For example the equation for Type II,
see \cite{KenNaga}, 
is:
\[
	x^{\prime\prime} (s) y^\prime (s) - y^{\prime\prime} (s) x^\prime (s) 
	- \frac { \ell y^\prime (s) } { x (s) }
	+ \frac { m x^\prime (s) } { y (s) }
	=
	( n - 1 ) H (s) ,
\]
together with equation \pref{archlength}. 
Each term in the left-hand side of the above equation is
\[
	x^{\prime\prime} (s) y^\prime (s) - y^{\prime\prime} (s) x^\prime (s)
	= 
	\vecx^{ \prime \prime } (s)^\bot \cdot \vecx^\prime (s)
	,
\]
\[
	- \frac { \ell y^\prime (s) } { x (s) }
	=
	\frac { \ell \, ^t \! ( 0 , 1 ) \cdot \, ^t \! ( x^\prime (s) , y^\prime (s) ) } { \, ^t \! ( -1 , 0 ) \cdot \, ^t \! ( x (s) , y (s)  ) }
	=
	\frac { n_1 \vece ( \phi_1 ) \cdot \vecx^\prime (s) } { \vece ( \phi_1 )^\bot \cdot \vecx (s) }
	,
\]
\[
	\frac { m x^\prime (s) } { y (s) }
	=
	\frac { m \, ^t \! ( 1 , 0 ) \cdot \, ^t \! ( x^\prime (s) , y^\prime (s) ) } { \, ^t \! ( 0,1 ) \cdot \, ^t \! ( x (s) , y (s) ) }
	=
	\frac { n_0 \vece ( \phi_0 ) \cdot \vecx^\prime } { \vece ( \phi_0 )^\bot \cdot \vecx (s) }
	,
\]
and
\[
	\sum_{ j \in J } n_j
	=
	\ell + m
	=
	n - 2 .
\]
\par
Our main result is:
\begin{thm}
Let $ H $ be a continuous function on $ \mathbb{R} $.
Put
\[
	S = \{ \vecx \in \mathbb{R}^2 \, | \, \vece ( \phi_j )^\bot \cdot \vecx = 0 \mbox{ for some } j \in J \} .
\]
For any $ \vecx_0 \not \in S $ and $ s_0 \in \mathbb{R} $,
there exists a solution $ \vecx(s) $ to {\rm \pref{equationx}} on $ \mathbb{R} $ satisfying $ \vecx( s_0 ) = \vecx_0 $.
\label{thm1.1}
\end{thm}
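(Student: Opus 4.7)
The plan is to recast \pref{equationx} as a first-order ODE system for $(\vecx,\theta) \in \mathbb{R}^2\times\mathbb{R}$, where $\theta$ is the tangent angle of the generating curve; to obtain a local $C^1$ solution away from $S$ by Peano's theorem; and to extend the maximal solution past each contact with $S$ via a perpendicular-crossing analysis. Since the unit-speed constraint makes $\vecx$ globally $1$-Lipschitz, the only conceivable obstruction to global existence is that $\vecx(s)$ reach $S$ in finite arclength.

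Concretely, setting $\vecx'(s)=\vece(\theta(s))$, one computes $\vecx''(s)^\bot\cdot\vecx'(s)=-\theta'(s)$ and $\vece(\phi_j)\cdot\vecx'(s)=\cos(\theta(s)-\phi_j)$. Writing $\rho_j(s):=\vece(\phi_j)^\bot\cdot\vecx(s)$, equation \pref{equationx} is equivalent to
\[
\vecx'(s)=\vece(\theta(s)),\qquad
\theta'(s)=-(n-1)H(s)+\sum_{j\in J}\frac{n_j\cos(\theta(s)-\phi_j)}{\rho_j(s)}.
\]
The right-hand side is continuous in $(s,\vecx,\theta)$ on the open set $\mathbb{R}\times(\mathbb{R}^2\setminus S)\times\mathbb{R}$, so Peano's theorem furnishes a local $C^1$ solution through $(\vecx_0,\theta_0)$ for any choice of $\theta_0$. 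Extending to the maximal interval $(s_-,s_+)$, the $1$-Lipschitz property of $\vecx$ forces $\vecx(s)\to\vecx^*\in S$ as $s\to s_+^-$ whenever $s_+<\infty$ (and analogously at $s_-$).

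When $\vecx^*$ lies on a single line $L_j=\{\vecx:\rho_j(\vecx)=0\}$, set $\psi:=\theta-\phi_j$. The dominant balance near $L_j$ is $\rho_j'=\sin\psi$, $\psi'\sim n_j\cos\psi/\rho_j$, whose leading-order first integral $\rho_j^{n_j}|\cos\psi|$ stays bounded away from zero on each level curve of positive value. Consequently $\rho_j\to 0$ is possible only along the separatrix $\cos\psi=0$, and a perturbation argument absorbing $H(s)$ together with the other (bounded) summands forces $\psi(s)\to\pi/2\pmod\pi$: the generating curve meets $L_j$ perpendicularly. A Taylor expansion near $s_+$ then yields $\rho_j(s)\sim\kappa(s_+-s)$ and $\cos\psi(s)\sim c(s_+-s)$, so $\cos\psi/\rho_j$ admits a finite limit and $(\vecx(s),\theta(s))$ extends continuously to $(\vecx^*,\theta^*)$ with $\theta^*\equiv\phi_j+\pi/2\pmod\pi$. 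A shooting argument, or Peano's theorem applied just past $L_j$ on the opposite side, then extends the solution beyond $s_+$; iteration gives a solution on the whole line.

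The main obstacle is the degenerate case in Types III--V in which $\vecx^*$ is the common intersection $\veco$ of several lines $L_j$, so multiple denominators vanish simultaneously. The one-line analysis above must be replaced by a coupled multi-line normal-form analysis, or a blow-up at $\veco$, to show that the approach directions along each $L_j$ are mutually compatible and that the trajectory can be continued through $\veco$ (or to show that such trajectories do not arise from the given initial data). This is the principal technical novelty relative to \cite{KenNaga}, which treated only the single-line (Type I) and two-orthogonal-line (Type II) configurations.
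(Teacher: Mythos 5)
Your reduction to the first-order system for $(\vecx,\theta)$ is correct and your computation $\vecx''{}^\bot\cdot\vecx'=-\theta'$, $\vece(\phi_j)\cdot\vecx'=\cos(\theta-\phi_j)$, $\rho_j'=\sin(\theta-\phi_j)$ agrees with what the paper uses after the rotation in \S~2. Your informal "first integral'' $\rho_j^{n_j}\cos\psi$ is exactly the quantity $v^{n_i}u'$ that appears in the paper's Proposition~\ref{prop3.1}; the paper turns it into a proof by integrating $(v^{n_i}u')'=(\cdots)v^{n_i}v'$, showing $v'\neq 0$ near the singularity, and then applying L'Hospital. So for the single-line case (Case~(i)), you have correctly identified the structure but stopped at the heuristic level: the step "stays bounded away from zero on each level curve of positive value'' does not survive the perturbation by $H$ and by the other summands, and the paper's L'Hospital argument is what actually closes that gap.

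There are two more serious omissions. First, \emph{extension past} $S$ cannot be obtained by "Peano's theorem applied just past $L_j$'': the vector field $(s,\vecx,\theta)\mapsto\bigl(\vece(\theta),\,-(n-1)H+\sum_j n_j\cos(\theta-\phi_j)/\rho_j\bigr)$ is not continuous at any point of $S$, so Peano gives nothing there. One must solve a \emph{singular} initial-value problem with data $\vecx(0)\in S$, $\vecx'(0)=\vece(\phi_i+\pi/2)$; the paper does this in Proposition~\ref{prop3.2} by passing to $q=du/dv$, rewriting the problem as a Volterra-type integral equation $q(v)=v^{-n_i}\int_0^v\varphi(q)\,d\eta$, and running a contraction in the weighted space $X_{V,M}=\{f: \sup_v|f(v)/v|\leqq M\}$. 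Your "shooting argument'' does not substitute for this and would, at best, need a separate compactness/uniqueness argument.

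Second, and most importantly, you explicitly punt on the case $\vecx^*=\veco$, which you correctly identify as "the principal technical novelty relative to \cite{KenNaga}.'' That case is the entire content of \S~4, and it is where the paper's real work lies. The asymptotic direction is no longer perpendicular but the angle $\theta_i$ characterized by $\sum_j n_j\cot(\theta_i-\phi_j)=0$, and establishing $\lim_{s\to 0}\vecx'(s)=\vece(\theta_i)$ requires a chain of lemmas (monotonicity, a lim inf/lim sup squeeze via L'Hospital, a delicate "trapping'' argument in $(w,z)=(v/u,v'/u')$ coordinates). The converse solvability (Proposition~\ref{prop4.2}) is even harder: after writing the problem as a first-order system in $(r,\rho)$, the naive contraction fails whenever $\gamma=\#J-1\geqq 3$ (Types~IV--V), because the linear principal part $\bar\Psi$ has operator norm $\gamma(n-2)/(2(n-1))$ on $X_{V,M}$, which exceeds $1$. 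The paper's fix is to diagonalize the $2\times 2$ constant coefficient matrix with eigenvalues $\lambda_\pm=\frac{1}{2}\bigl(n-1\pm\sqrt{n^2-2(2\gamma+3)n+8\gamma+9}\bigr)$, both of positive real part, and then run the contraction in the diagonalized variables $(\hat r,\hat\rho)$; since $\lambda_\pm$ may be complex, one must in addition prove (Lemma~\ref{ImFk} together with Gronwall) that the fixed point is real-valued. None of this is present, even in outline, in your proposal, so as it stands the argument does not prove the theorem for Types~III--V.
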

\par
Our equation is singular on the set $ S $.
Since $ \vecx ( s_0 ) \not \in S $,
then it is simple to construct a solution near $ s = s_0 $,
and we can extend the solution as long as $ \vecx (s) \not \in S $.
To extend the solution,
a problem happens when a solution approaches to $ S $ as $ s \to s_\ast $ for some $ s_\ast \in \mathbb{R} $.
It is a non trivial  fact that the solution can be extended beyond $ s = s_\ast $.
We shall study the asymptotic behavior of $ \vecx^\prime ( s ) $ as $ s \to s_\ast $,
and in particular the existence of the limit $ \displaystyle{ \lim_{ s \to s_\ast } \vecx^\prime ( s ) } $,
say $ \vecx^\prime_\ast $.
Furthermore, we shall construct solutions beyond $ s_\ast $ with $ \vecx^\prime (s_\ast) = \vecx^\prime_\ast $.
\par
By a formal blow-up argument we can evaluate the limit $ \vecx_\ast^\prime $.
For simplicity we shall assume $ s_\ast = 0 $.
Let the generating curve be in the sector
\be
	S_i = \left\{
	\begin{array}{ll}
	\{ \vecx \in \mathbb{R}^2 \, | \,
	\vece ( 0 )^\bot \cdot \vecx > 0 \}
	= \\ \hfill 
	= \{ ( x,y ) \in \mathbb{R}^2 \, | \, y > 0 \}
	& \mbox{for Type I},
	\\[1mm]
	\{ \vecx \in \mathbb{R}^2 \, | \, \vece( \phi_{ i+1 } )^\bot \cdot \vecx < 0 < \vece( \phi_i )^\bot \cdot \vecx \}
	& \mbox{for Types II--V}
	\end{array}
	\right.
	\label{sector-i}
\ee
for $ i = 0 $ (Type I),
or for some $ i \in J \setminus \{ \max J \} $ (Types II--V).
There are three cases:
(i) $ \displaystyle{ \lim_{ s \to 0 } \vecx ( s ) = \vecx_\ast \ne \veco } $,
$ \vece ( \phi_i )^\bot \cdot \vecx_\ast = 0 $ (the condition $ \vecx_\ast \ne \veco $ can be removed for Type I by a translation);
(i)$^\prime $ $ \displaystyle{ \lim_{ s \to 0 } \vecx ( s ) = \vecx_\ast \ne \veco } $,
$ \vece ( \phi_{i+1} )^\bot \cdot \vecx_\ast = 0 $;
and (ii) $ \displaystyle{ \lim_{ s \to 0 } \vecx ( s ) = \veco } $ for Types II--V.
Since the argument for (i)$^\prime $ is similar to that for (i),
we shall consider cases (i) and (ii) only.
\par
For case (i),
we assume that there exists the limit $ \displaystyle{ \lim_{ s \to + 0 } \vecx^\prime ( s ) = \vece (\theta_\ast ) } $ and that $ \vecx^{ \prime \prime } (s) $ is bounded.
Put
\[
	\vecx_\lambda ( s ) = \lambda^{-1} ( \vecx ( \lambda s ) - \vecx_\ast )
\]
for $ \lambda > 0 $. 
Then it is easy to see that
\[
	\vecx_\lambda^{ \prime \prime } ( s )^\bot \cdot \vecx_\lambda^\prime (s)
	+
	\sum_{ j \in J } \frac
	{ \lambda n_j \vece ( \phi_j ) \cdot \vecx_\lambda^\prime ( s ) }
	{ \vece ( \phi_j )^\bot \cdot \left( \lambda \vecx_\lambda (s) - \vecx_\ast \right) }
	=
	( n -  1 ) \lambda H ( \lambda s ) .
\]
Since we have
\[
	\lim_{ \lambda \to + 0 } \vecx_\lambda (s)
	= s \vecx^\prime (0)
	= s \vece ( \theta_\ast )
	,
	\quad
	\lim_{ \lambda \to + 0 } \vecx_\lambda^\prime (s)
	= \vecx^\prime (0) = \vece ( \theta_\ast ) ,
	\quad
	\lim_{ \lambda \to + 0 } \vecx_\lambda^{ \prime \prime } (s)
	= \veco ,
\]
we get
\[
	\vece ( \phi_i ) \cdot \vece ( \theta_\ast ) = 0
\]
by letting $ \lambda \to + 0 $.
Consequently,
$ \theta_\ast = \phi_i + \frac \pi 2 $.
We can obtain a similar result letting $ s \to - 0 $.
Thus the generating curve touches perpendicularly the boundary of sector $ S_i $.
\par
For case (ii),
we assume the existence of $ \displaystyle{ \lim_{ s \to + 0 } \vecx^\prime ( s ) = \vece (\theta_\ast ) } $,
$ \theta_\ast \ne \phi_j $ for $ j \in J $,
and the boundedness of $ \vecx^{ \prime \prime } (s) $.
Putting
\[
	\vecx_\lambda (s) = \lambda^{-1} \vecx ( \lambda s ) ,
\]
we have
\[
	\vecx_\lambda^{ \prime \prime } ( s )^\bot \cdot \vecx_\lambda^\prime (s)
	+
	\sum_{ j \in J } \frac
	{ n_j \vece ( \phi_j ) \cdot \vecx_\lambda^\prime ( s ) }
	{ \vece ( \phi_j )^\bot \cdot \vecx_\lambda (s) }
	=
	( n -  1 ) \lambda H ( \lambda s ) ,
\]
and
\[
	\sum_{ j \in J } n_j \cot ( \theta_\ast - \phi_j ) = 0
\]
by $ \lambda \to + 0 $.
Put
\be
	A( \theta ) = \sum_{ j \in J } n_j \cot ( \theta - \phi_j ) .
	\label{functionA}
\ee
Since $ A ( \cdot ) $ is monotone decreasing on each interval $ ( \phi_i , \phi_{ i+1 } ) $,
and since
\[
	\lim_{ \theta \to \phi_i + 0 } A( \theta ) = \infty ,
	\quad
	\lim_{ \theta \to \phi_{ i+1 } - 0 } A( \theta ) = - \infty
\]
there exists a unique $ \theta_i $ on each interval $ ( \phi_i , \phi_{ i+1 } ) $ such that $ A( \theta_i ) = 0 $.
Thus the generating curve approaches to the origin with angle $ \theta_i $.
\par
In the above argument we assume the existence of $ \displaystyle{ \lim_{ s \to 0 } \vecx^\prime (s) } $,
the boundedness $ \vecx^{ \prime \prime } (s) $ and so on.
In the following sections,
we shall prove the asymptotic behavior as above without these assumptions,
and shall show the existence of solutions of \pref{equationx} with the initial value
\be
	\vece ( \phi_i )^\bot \cdot \vecx (0) = 0 ,
	\quad
	\vecx (0) \ne \veco ,
	\quad
	\vecx^\prime (0) = \vece \left( \phi_i + \frac \pi 2 \right)
	\label{initialcondition-i}
\ee
for all Types I--V,
or
\be
	\vecx (0) = 0 ,
	\quad
	\vecx^\prime (0) = \vece( \theta_i )
	\label{initialcondition-ii}
\ee
for Types II--V.
\begin{rem}
By direct calculation,
we get the explicit values of each $ \theta_i $:
\begin{description}
\item[{\rm Type II:}]
	$ \theta_0 = \arctan\sqrt{ \frac { n_0 } { n_1 } } $.
\item[{\rm Type III:}]
	$ \theta_i = \frac 12 ( \phi_i + \phi_{ i+1 } ) $.
\item[{\rm Type IV:}]
	$ \theta_{\pm 1 } = - \frac 12 \arctan \sqrt{ \frac k \ell } $,
	$ \theta_0 = \frac 12 \arctan \sqrt{ \frac k \ell } $.
\item[{\rm Type V:}]
	$ \theta_i = \frac 12 ( \phi_i + \phi_{ i+1 } ) $.
\end{description}
For the calculations, see Fact \ref{thetai} in the Appendix.
\end{rem}
\par
We shall discuss the following two cases in \S\S~3--4 respectively:
\begin{description}
\item[{\rm Case (i)}]
The asymptotic behavior of $ \vecx^\prime (s) $ when $ \displaystyle{ \lim_{ s \to 0 } \vecx (s) = \vecx_\ast \ne \veco } $,
and the solvability of the initial-value problem \pref{equationx} and \pref{initialcondition-i},
in Propositions \ref{prop3.1}--\ref{prop3.2}.
\item[{\rm Case (ii)}]
The asymptotic behavior of $ \vecx^\prime (s) $ when $ \displaystyle{ \lim_{ s \to 0 } \vecx (s) = \veco } $,
and the solvability of the initial-value problem \pref{equationx} and \pref{initialcondition-ii},
in Propositions \ref{prop4.1}--\ref{prop4.2}.
\end{description}
\par
Theorem \ref{thm1.1} follows then from these Propositions.
For all types in  Case (i) and for Types II--III in  Case (ii),
the derivations for setting the problem are more complicated but similar to those in \cite{KenNaga}.
Therefore, we shall present the setting in detail, but  some estimates  just briefly.
For the Types IV--V in the case (ii),
we need here one extra procedure in comparison to \cite{KenNaga}.
We can prove our results without the extra procedure for Types II--III as \cite{KenNaga},
but this extra procedure is applicable  for all types, and
in this sense the proof shall be universal.
\section{A transformation}
The following is a useful transformation.
We define the matrix $ R ( \psi ) $ and a vector $ \vecu = \, ^t ( u,v ) $ by
\par
\[
	R( \psi ) =
	\left(
	\begin{array}{rr}
	\cos \psi & - \sin \psi \\
	\sin \psi & \cos \psi
	\end{array}
	\right)
	,
	\quad
	\vecu = \, ^t \! ( u,v ) 
	= R( \psi ) \vecx .
\]
It is easy to see that when $ \vecx (s) $ satisfies \pref{equationx},
the new unknown vector function $ \vecu (s) = ( u(s) , v(s) ) $ satisfies
\[
	\vecu^{ \prime \prime } (s)^\bot \cdot \vecu^\prime (s)
	+
	\sum_{ j \in J }
	\frac { n_j \vece ( \phi_j + \psi ) \cdot \vecu^\prime (s) }
	{ \vece ( \phi_j + \psi )^\bot \cdot \vecu (s) }
	=
	( n - 1 ) H (s)
	,
	\quad
	\| \vecu^\prime (s) \|^2 = 1 .
\]

\par
Assume that the generating curve $ \vecx (s) $ is in the sector $ S_i $ defined by \pref{sector-i}.
We transform $ \vecx $ to $ \vecu $ with $ \psi = - \phi_i $,
then $ \vecu (s) $ is in the sector $ S_0 $ in $ uv $-plane.
\section{Case (i)}
\par
First we show
\begin{prop}
Let the generating curve $ \vecx (s) $ be in the sector $ S_i $,
and assume that
\[
	\lim_{ s \to 0 }
	\vece ( \phi_i )^\bot \cdot \vecx (s) = 0 ,
	\quad
	\lim_{ s \to 0 }
	\vecx (s) = \vecx_0 \ne 0 .
\]
Then there exists the limit of $ \vecx^\prime (s) $ as $ s \to 0 $ and
\[
	\lim_{ s \to 0 } \vece ( \phi_i ) \cdot \vecx^\prime (s) = 0 .
\]
\label{prop3.1}
\end{prop}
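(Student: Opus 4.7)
The plan is first to reduce to the normalized case via the transformation of Section~2 with $\psi=-\phi_i$; after relabeling we may assume $\phi_i=0$, the sector is $S_0=\{(x,y)\mid y>0\}$, and the hypotheses read $y(s)\to 0^+$, $x(s)\to x_0\ne 0$. Writing the unit tangent as $\vecx^\prime(s)=\vece(\theta(s))$ and using $\vecx^{\prime\prime}(s)^\bot\cdot\vecx^\prime(s)=-\theta^\prime(s)$, equation \pref{equationx} becomes
\[
\theta^\prime(s)=\frac{n_0\cos\theta(s)}{y(s)}+G(s),
\]
where $G$ collects all the remaining terms. Because $\vece(\phi_j)^\bot\cdot\vecx(s)\to -x_0\sin\phi_j\ne 0$ for $j\ne 0$ and $H$ is continuous, $G$ is bounded near $s=0$.

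Next I would establish the one-sided limit $L:=\lim_{s\to 0^-}y^\prime(s)$. Differentiating $y^\prime=\sin\theta$ and substituting the ODE gives
\[
y^{\prime\prime}(s)=\frac{n_0(x^\prime(s))^2}{y(s)}+x^\prime(s)G(s)\geq -\|G\|_\infty,
\]
so $y^\prime(s)+\|G\|_\infty s$ is nondecreasing and bounded; hence $L\in[-1,1]$ exists. Moreover $L\leq 0$: if $L>0$ then $y$ would be strictly increasing on some $(-\delta,0)$, contradicting $y>0$ together with $y(0)=0$.

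The main obstacle is to rule out $L=0$. Suppose $L=0$; then $(x^\prime(s))^2=1-(y^\prime(s))^2\to 1$, so by continuity $x^\prime$ has constant sign near $s=0^-$, say $x^\prime(s)\geq \tfrac{1}{2}$. The Lipschitz bound $y(s)\leq |s|$ (from $|y^\prime|\leq 1$ and $y(0)=0$) combined with the ODE gives $\theta^\prime(s)\geq n_0/(2|s|)-\|G\|_\infty$. Integrating on $[s_2,s]$ with $s_2<0$ fixed yields $\theta(s)\geq \theta(s_2)+\tfrac{n_0}{2}\log(|s_2|/|s|)+O(1)\to +\infty$ as $s\to 0^-$. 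Continuity of $\cos$ then prevents $\cos\theta(s)\to +1$, contradicting $x^\prime(s)\to +1$; the case $x^\prime(s)\to -1$ is symmetric. Hence $L<0$, and $y$ is strictly decreasing on some $(-\delta_0,0)$.

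Finally, direct differentiation combined with the ODE yields the identity
\[
(y^{n_0}x^\prime)^\prime(s)=-y(s)^{n_0}y^\prime(s)G(s).
\]
Integrating from $s$ up to $0$ and using $y(s)^{n_0}x^\prime(s)\to 0$, the monotonicity of $y$ permits the change of variable $u=y(\sigma)$:
\[
|y(s)^{n_0}x^\prime(s)|\leq \|G\|_\infty\int_0^{y(s)}u^{n_0}\,du=\frac{\|G\|_\infty}{n_0+1}\,y(s)^{n_0+1},
\]
so $|x^\prime(s)|\leq\tfrac{\|G\|_\infty}{n_0+1}\,y(s)\to 0$. Together with $L<0$ and the arclength relation, this forces $y^\prime(s)\to -1$, whence $\vecx^\prime(s)\to \vece(-\pi/2)$; undoing the rotation by $-\phi_i$ yields $\vecx^\prime(s)\to \vece(\phi_i-\pi/2)$, and in particular $\vece(\phi_i)\cdot\vecx^\prime(s)\to 0$. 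The one-sided limit from $s\to 0^+$ is handled identically with signs reversed.
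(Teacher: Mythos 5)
Your proof is correct, and while it shares the paper's overall architecture (rotate to put the sector in the normalized position, derive the exact identity $\left(v^{n_i}u'\right)' = \bigl\{(n-1)H - \sum_{j\ne i}\cdots\bigr\}v^{n_i}v'$, then exploit the monotone approach of $v$ to $0$), the two key technical steps are handled differently. To show that $v'$ does not vanish near $s=0$, the paper argues by contradiction at a hypothetical sequence $s_k$ of critical points of $v$, computing $v''(s_k)\to\infty$ and concluding these would all be strict local minima, incompatible with $v(s)\to 0^+$. You instead pass to the turning-angle parametrization $\vecu'=\vece(\theta)$, use the one-sided convexity bound $v''\geqq -\|G\|_\infty$ to conclude that $L=\lim v'(s)$ exists, and rule out $L=0$ by observing that $\theta'$ would then fail to be integrable near $s=0$, forcing $\theta\to\pm\infty$ and contradicting $u'=\cos\theta\to\pm 1$; this is slightly more work but produces the limit of $v'$ directly rather than only its nonvanishing. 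For the conclusion, the paper applies L'H\^opital's rule to $u'(s) = v(s)^{-n_i}\int_0^s\{\cdots\}v^{n_i}(t)v'(t)\,dt$, whereas you change variables $\eta=v(\sigma)$ in the integral (legitimate by the monotonicity of $v$ just established) to obtain the quantitative bound $|u'(s)|\leqq C\,v(s)$, which sidesteps L'H\^opital entirely and gives a rate. Both routes are valid; yours is a bit longer but more explicit and self-contained.
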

\begin{proof}
As stated in \S~2,
we transform $ \vecx (s) $ to $ \vecu (s) $ with $ \psi = - \phi_i $.
Then $ \vecu (s) = ( u(s) , v(s) ) $ satisfies
\be
	u^{\prime\prime} v^\prime - v^{\prime\prime} u^\prime
	+
	\frac { n_i u^\prime } v
	+
	\sum_{ j \ne i }
	\frac { n_j \vece ( \phi_j - \phi_i ) \cdot \vecu^\prime }
	{ \vece ( \phi_j - \phi_i )^\bot \cdot \vecu }
	=
	( n - 1 ) H
	,
	\quad
	\| \vecu^\prime \|^2 = 1
	.
	\label{equationu1}
\ee
The assumption on $ \displaystyle{ \lim_{ s \to 0 } \vecx (s) } $ is written as
\be
	\lim_{ s \to 0 } u(s) > 0 ,
	\quad
	\lim_{ s \to 0 } v(s) = + 0
	\label{assumptiononuv}
\ee
in terms of $ u(s) $ and $ v(s) $.
What we want to show is
\[
	\lim_{ s \to 0 } u^\prime (s) = 0 .
\]
\par
Multiplying both sides of the first equation of \pref{equationu1} by $ v^{ n_i } v^\prime $,
and using the second relation,
we have
\[
	\left( v^{ n_i } u^\prime \right)^\prime
	=
	\left\{
	( n - 1 ) H
	-
	\sum_{ j \ne i }
	\frac
	{ n_j \vece ( \phi_j - \phi_i ) \cdot \vecu^\prime }
	{ \vece ( \phi_j - \phi_i )^\bot \cdot \vecu }
	\right\}
	v^{ n_i } v^\prime
	.
\]
Taking \pref{assumptiononuv} into account,
we get
\[
	u^\prime (s)
	=
	\frac 1 { v^{ n_i } (s) }
	\int_0^s
	\left\{
	( n - 1 ) H(t)
	-
	\sum_{ j \ne i }
	\frac
	{ n_j \vece ( \phi_j - \phi_i ) \cdot \vecu^\prime (t) }
	{ \vece ( \phi_j - \phi_i )^\bot \cdot \vecu (t) }
	\right\}
	v^{ n_i } (t) v^\prime (t)
	\, dt
\]
by integration of the equation of $ u(s) $.
\par
Now we show $ v^\prime (s) \ne 0 $ near $ s = 0 $.
Assume that there exists a sequence $ \{ s_k \} $ such that
$ v^\prime ( s_k ) = 0 $,
$ \displaystyle{ \lim_{ k \to \infty } s_k = 0 } $.
Inserting $ s = s_k $ into $ \left( u^\prime \right)^2 + \left( v^\prime \right)^2 \equiv 1 $ and 
$ u^\prime u^{ \prime \prime } + v^\prime v^{ \prime \prime } \equiv 0 $,
we have
\[
	u^\prime ( s_k ) = \pm 1 ,
	\quad
	u^{ \prime \prime } ( s_k ) = 0
	.
\]
By evaluating the equation at $ s = s_k $, it follows that
\[
	v^{ \prime \prime } ( s_k )
	=
	\mp \left( 1 + \sum_{ j \in J } n_j \right) H( s_k )
	+ \frac { n_i } { v ( s_k ) }
	+
	\sum_{ j \ne i }
	\frac { n_j \cos ( \phi_j - \phi_i ) }
	{ \vece ( \phi_j - \phi_i )^\bot  \cdot \vecu ( s_k ) }
\]
It holds that
\[
	\lim_{ k \to \infty } \vece ( \phi_j - \phi_i )^\bot \cdot \vecu ( s_k )
	=
	- u(0) \sin ( \phi_j - \phi_i ) \ne 0
	\quad \mbox{for} \quad
	j \ne i .
\]
Therefore,
we have
\[
	\lim_{ k \to \infty } v^{ \prime \prime } ( s_k )
	=
	\lim_{ k \to \infty } \frac { n_i } { v( s_k ) }
	=
	\infty
	.
\]
Consequently,
$ v( s_k ) $'s are always local minimum values for large $ k $.
This contradicts the asumption that $ \displaystyle{ \lim_{ s \to 0 } v(s) = + 0 } $.
\par
Since $ v^\prime (s) \ne 0 $ near $ s = 0 $,
we can use L'Hospital's theorem to obtain
\[
	\begin{array}{rl}
	\displaystyle{
	\lim_{ s \to 0 } u^\prime (s)
	}
	= & \zume
	\displaystyle{
	\lim_{ s \to 0 }
	\frac
	{
	\displaystyle{
	\left\{
	\left( 1 + \sum_{ j \in J } n_j \right) H(s)
	-
	\sum_{ j \ne i }
	\frac
	{ n_j \vece ( \phi_j - \phi_i ) \cdot \vecu^\prime (s) }
	{ \vece ( \phi_j - \phi_i )^\bot \cdot \vecu (s) }
	\right\}
	v^{ n_i } (s) v^\prime (s)
	}}
	{ n_i v^{ n_i -1 } (s) v^\prime (s) }
	}
	\\
	= & \zume
	\displaystyle{
	\lim_{ s \to 0 }
	\left\{
	\left( 1 + \sum_{ j \in J } n_j \right) H(s)
	-
	\sum_{ j \ne i }
	\frac
	{ n_j \vece ( \phi_j - \phi_i ) \cdot \vecu^\prime (s) }
	{ \vece ( \phi_j - \phi_i )^\bot \cdot \vecu (s) }
	\right\}
	\frac { v (s) } { n_i }
	}
	\\
	= & \zume
	0
	.
	\end{array}
\]
Here we use
\[
	\lim_{ s \to 0 } \vece ( \phi_j - \phi_i )^\bot \cdot \vecu ( s )
	=
	- u(0) \sin ( \phi_j - \phi_i ) \ne 0
	\quad \mbox{for} \quad
	j \ne i .
\]
\qed
\end{proof}
\par
Next we prove the converse of Proposition \ref{prop3.1},
{\em i.e.},
the solvability of \pref{equationx} and \pref{initialcondition-i}.
The problem is equivalent to \pref{equationu1} and
\be
	u^\prime (0) = 0
	.
	\label{initialconditionu-i}
\ee
Since $ v^\prime (0)^2 = 1 $,
the map $ s \mapsto v $ is monotone near $ s = 0 $.
Therefore,
there exists the inverse function $ s = s(v) $.
Put
\[
	q = \frac { du } { dv } = \frac { u^\prime } { v^\prime }
\]
as a function of $ v $.
We divide both sides of the first equation of \pref{equationu1} by $ \left( v^\prime \right)^3 $.
Using $ \| \vecu^\prime (s) \| \equiv 1 $,
we get
\be
	\begin{array}{l}
	\displaystyle{
	\frac { dq } { dv } + \frac { n_i q  } v
	=
	- \frac { n_i q^3 } v
	}
	\\[3mm]
	\quad
	\displaystyle{
	+ \,
	\sum_{ j \ne i }
	\frac
	{
	n_j \left( q^2 + 1 \right)
	\left\{ q \cos ( \phi_j - \phi_i ) + \sin ( \phi_j - \phi_i ) \right\}
	}
	{ \displaystyle{
	\left( u(0) + \int_0^v q( \eta ) \, d \eta \right) \sin ( \phi_j - \phi_i )
	-
	v \cos ( \phi_j - \phi_i )
	}
	} }
	\\[4mm]
	\quad
	\displaystyle{
	+ \,
	( n - 1 )
	\left( q^2 + 1 \right)^{ \frac 32 } \tilde H
	}
	,
	\end{array}
	\label{diffeqq1}
\ee
where
\[
	\tilde H = \left( \mbox{\rm sgn} \ v^\prime \right) H .
\]
We multiply both sides by $ v^{ n_i } $ and integrate from $ 0 $ to $ v $.
Since
\[
	\lim_{ v \to 0 } q(v) = \lim_{ s \to 0 } \frac { u^\prime (s) } { v^\prime (s) } = 0
	,
\]
we obtain
\be
	q(v) = \frac 1 { v^{ n_i } } \int_0^v \varphi( q ) ( \eta ) \, d \eta
	,
	\label{inteqq1}
\ee
where
\[
	\varphi( q ) ( \eta )
	=
	\varphi_1 ( q ) ( \eta )
	+
	\varphi_2 ( q ) ( \eta )
	+
	\varphi_3 ( q ) ( \eta )
	,
\]
\[
	\varphi_1 ( q ) ( \eta )
	=
	- n_i q( \eta )^3 \eta^{ n_i - 1 }
	,
\]
\[
	\varphi_2 ( q ) ( \eta )
	=
	\sum_{ j \ne i }
	\frac
	{
	\left( q^2 + 1 \right)
	\left\{ q ( \eta ) \cos ( \phi_j - \phi_i ) + \sin ( \phi_j - \phi_i ) \right\}
	\eta^{ n_i }
	}
	{ \displaystyle{
	\left( u(0) + \int_0^\eta q( \zeta ) \, d \zeta \right) \sin ( \phi_j - \phi_i )
	-
	\eta \cos ( \phi_j - \phi_i )
	}
	}
	,
\]
\[
	\varphi_3 ( q ) ( \eta )
	=
	( n - 1 )
	\left( q( \eta )^2 + 1 \right)^{ \frac 32 }
	\tilde H( \eta ) \eta^{ n_i }
	.
\]
\par
Define the Banach space $ X_V $ and its bounded set $ X_{V,M} $ by
\[
	X_V
	=
	\left\{ f \in C( 0 , V ] \, | \,
	\| f \| < \infty \right\}
	,
	\quad
	\| f \| = \sup_{ v \in ( 0 , V ] } \left| \frac { f ( v ) } v \right|
	,
\]
\[
	X_{V,M} = \{ f \in X_V \, | \, \| f \| \leqq M \}
	.
\]
Using the boundedness of $ H $,
we can show that if $ M $ is large and if $ V $ is small,
the map
\[
	\Phi ( q ) ( v )
	=
	\frac 1 { v^{ n_i } } \int_0^v \varphi (q) ( \eta ) \, d \eta
\]
defined on $ X_{V,M} $ into itself and it is contraction.
Indeed we have
\[
	\left\| \frac 1 { v^{ n_i } } \int_0^v \phi_1 ( q ) ( \eta ) \, d \eta \right\|
	\leqq C M^3 V^2 ,
\]
\[
	\left\| \frac 1 { v^{ n_i } } \int_0^v \phi_2 ( q ) ( \eta ) \, d \eta \right\|
	\leqq C \left( 1 + M^3 V^2 \right) ,
\]
\[
	\left\| \frac 1 { v^{ n_i } } \int_0^v \phi_3 ( q ) ( \eta ) \, d \eta \right\|
	\leqq C \left( 1 + M^3 V^2 \right)
\]
for $ q \in X_{M,V} $;
\[
	\left\| \frac 1 { v^{ n_i } } \int_0^v
	\left( \phi_1 ( q_1 ) ( \eta ) - \phi_1 ( q_2 ) ( \eta ) \right)
	d \eta \right\|
	\leqq
	C M^2 V^2 \| q_1 - q_2 \|
	,
\]
\[
	\left\| \frac 1 { v^{ n_i } } \int_0^v
	\left( \phi_2 ( q_1 ) ( \eta ) - \phi_2 ( q_2 ) ( \eta ) \right)
	d \eta \right\|
	\leqq
	C \left( M^2 V^3 + V + M V^2 + V^2 \right) \| q_1 - q_2 \|
	,
\]
\[
	\left\| \frac 1 { v^{ n_i } } \int_0^v
	\left( \phi_2 ( q_1 ) ( \eta ) - \phi_2 ( q_2 ) ( \eta ) \right)
	d \eta \right\|
	\leqq
	C \left( M^2 V^3 + M V^2 \right) \| q_1 - q_2 \|
\]
for $ q_1 \in X_{M,V} $ and $ q_2 \in X_{M,V} $.
Since these estimates can be obtained in the same way as in \cite{KenNaga},
we omit details.
Hence,
there exists the unique fixed point of $ \Phi $ in $ X_{M,V} $,
which solves \pref{inteqq1}.
If $ H $ is continuous,
then it solves \pref{diffeqq1} satisfying $ q(0) = 0 $.
From this fact we get the solvability of the original problem, and the proof of:
\begin{prop}
Let $ H $ be continuous.
Then there exists a unique local solution $ \vecx $ to {\rm \pref{equationx}} and {\rm \pref{initialcondition-i}}.
\label{prop3.2}
\end{prop}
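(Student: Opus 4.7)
The plan is to convert the initial-value problem \pref{equationx}--\pref{initialcondition-i} into a fixed-point problem for a contraction on a weighted sup-norm space, exactly in the template already used in \cite{KenNaga} for Types I and II. First I would apply the rotation $\psi = -\phi_i$ of Section 2 to replace $\vecx$ by $\vecu = \, ^t \! ( u,v )$, turning the problem into \pref{equationu1} together with $u^\prime (0) = 0$ and $u(0) > 0$. Because $\| \vecu^\prime \|^2 \equiv 1$ forces $v^\prime (0) = \pm 1$, the map $s \mapsto v$ is strictly monotone in a one-sided neighborhood of $s = 0$, so I may regard $u$ and $s$ as functions of $v$ and introduce $q(v) = du/dv = u^\prime /v^\prime$ as a new unknown on a small interval $(0,V]$. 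Dividing the first equation of \pref{equationu1} by $(v^\prime )^3$ and using $\| \vecu^\prime \|^2 \equiv 1$ to eliminate $v^\prime$ yields the scalar ODE \pref{diffeqq1} for $q$, and the boundary condition $q(0) = 0$ comes from $u^\prime (0) = 0$ together with $v^\prime (0) = \pm 1$. Multiplying by the integrating factor $v^{n_i}$ and integrating from $0$ to $v$ produces the integral equation \pref{inteqq1}.

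Next I would introduce the Banach space $X_V$ with the weighted norm $\| f \| = \sup_{v \in (0,V]} |f(v)/v|$ and the closed ball $X_{V,M}$, and study the operator $\Phi (q)(v) = v^{-n_i} \int_0^v \varphi (q) (\eta)\, d\eta$. For $q \in X_{V,M}$ the pointwise bound $|q(v)| \le Mv$ lets me estimate $\varphi_1$, $\varphi_2$, $\varphi_3$ separately: the factor $\eta^{n_i}$ in each $\varphi_k$ cancels against the prefactor $v^{-n_i}$ after integration, the cubic $\varphi_1$ contributes $O(M^3V^2)$, and $\varphi_2$, $\varphi_3$ contribute $O(1 + M^3 V^2)$, using the continuity and local boundedness of $H$ and the fact that, for each $j \ne i$, the denominator in $\varphi_2$ stays bounded below by roughly $\frac{1}{2} u(0) |\sin (\phi_j - \phi_i)|$ once $V$ is small and $M V$ is small. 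Choosing $M$ sufficiently large to dominate the $O(1)$ contributions and then $V$ sufficiently small makes $\Phi$ map $X_{V,M}$ into itself.

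The main obstacle — and the step that actually pins down the smallness threshold on $V$ — is the Lipschitz estimate for $\Phi$, since both $\varphi_1$ and the numerator and denominator of $\varphi_2$ are nonlinear in $q$ and couple to the antiderivative $\int_0^\eta q(\zeta)\, d\zeta$. The three difference inequalities displayed in the excerpt show that the Lipschitz constant of $\Phi$ on $X_{V,M}$ is of order $V + M V^2 + M^2 V^3$, so after possibly shrinking $V$ further $\Phi$ becomes a strict contraction. The Banach fixed-point theorem then delivers a unique $q \in X_{V,M}$ solving \pref{inteqq1}. These estimates are obtained exactly as in \cite{KenNaga}; the only new ingredient is that the sum $\sum_{j \ne i}$ in $\varphi_2$ now ranges over up to five indices instead of one, but each summand is handled identically because we are bounded away from the zero set of its own denominator.

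Finally I would reverse the construction. Since $H$ is continuous, differentiating \pref{inteqq1} recovers \pref{diffeqq1} with $q(0) = 0$. Integrating $ds/dv = \pm (1 + q^2)^{-1/2}$ gives a monotone $v \mapsto s$, hence $v(s)$, and then $u(s) = u(0) + \int_0^{v(s)} q(\eta)\, d\eta$. The rotation $R(\phi_i)$ returns $\vecx (s) = \, ^t \! (x(s), y(s))$; by construction it satisfies \pref{equationx} on a one-sided neighborhood of $s=0$ with \pref{initialcondition-i}, and the same argument on the other side of $0$ yields a full local solution. Uniqueness in Proposition \ref{prop3.2} follows from uniqueness of the fixed point in $X_{V,M}$, since Proposition \ref{prop3.1} forces any solution of \pref{equationx}--\pref{initialcondition-i} to produce a $q$ with $q(0)=0$, and hence (after shrinking $V$ if necessary) a $q$ lying in $X_{V,M}$.
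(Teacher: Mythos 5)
Your proposal follows the paper's own proof of Proposition \ref{prop3.2} essentially step for step: rotate by $-\phi_i$ to pass to $\vecu$, change the independent variable to $v$ and introduce $q = u^\prime/v^\prime$, divide by $(v^\prime)^3$ to obtain \pref{diffeqq1}, use the integrating factor $v^{n_i}$ to pass to the integral equation \pref{inteqq1}, and run a Banach fixed-point argument on $X_{V,M}$ with the same weighted norm and the same family of estimates. The only additions are small amounts of detail (e.g.\ integrating $ds/dv$ to invert the change of variables, and a brief remark tying uniqueness of the fixed point to uniqueness of the solution via Proposition \ref{prop3.1}), none of which changes the method.
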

\section{Case (ii)}
Consider the equation for Types II--V.
\begin{prop}
Let the generating curve $ \vecx (s) $ be in the sector $ S_i $,
and assume that
\[
	\lim_{ s \to \pm 0 } \vecx (s) = \veco .
\]
Then there exists the limit of $ \vecx^\prime (s) $ as $ s \to \pm 0 $ and
\[
	\lim_{ s \to \pm 0 } \vecx^\prime (s) = \pm \vece ( \theta_i ) .
\]
Here $ \theta_i $ is the unique angle satisfying
\[
	\sum_{ j \in J } n_j \cot ( \theta_i - \phi_j ) = 0 ,
	\quad
	\phi_i < \theta_i < \phi_{ i+1 } .
\]
\label{prop4.1}
\end{prop}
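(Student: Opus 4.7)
The plan is to adapt the strategy of Proposition \ref{prop3.1} to the situation in which the generating curve limits to the origin. Because $\vecx(s) \to \veco$, polar coordinates are the natural framework: I would write $\vecx(s) = r(s)\vece(\alpha(s))$ with $r(s) \to +0$, and $\vecx'(s) = \vece(\theta(s))$. The archlength condition yields the identities $r' = \cos(\theta-\alpha)$ and $r\alpha' = \sin(\theta-\alpha)$, and since $\vecx^{\prime\prime}(s)^\bot \cdot \vecx'(s) = -\theta'(s)$, equation \pref{equationx} takes the form
\[
	-\theta'(s)\,r(s) + B(\alpha(s),\theta(s)) = (n-1) H(s)\,r(s),
	\qquad
	B(\alpha,\theta) := \sum_{j \in J}\frac{n_j \cos(\theta-\phi_j)}{\sin(\alpha-\phi_j)}.
\]
Since $B(\theta,\theta) = A(\theta)$, the formal blow-up in the introduction already predicts $\alpha(s),\theta(s) \to \theta_i$, the unique root of $A$ on $(\phi_i,\phi_{i+1})$.

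The argument would then unfold in three steps. Step~1: I would use a barrier argument, in the spirit of the proof that $v'(s) \ne 0$ in Proposition \ref{prop3.1}, to show that $\alpha(s)$ is trapped in a compact subinterval of $(\phi_i,\phi_{i+1})$ near $s = 0$; whenever $\alpha$ attempts to approach a boundary angle $\phi_k$ ($k = i$ or $i+1$), the singular term $n_k/(r\sin(\alpha-\phi_k))$ dominates the equation and repels the curve. Step~2: after applying the transformation of Section~2 with $\psi = -\phi_i$ to normalize $\phi_i = 0$, I would multiply the equation by an integrating factor of the form $r^{n-2}\prod_{j \in J}|\sin(\alpha-\phi_j)|^{n_j}$, whose total degree in $r$ equals $\sum_{j \in J} n_j = n-2$, chosen so that the leading singular combination becomes exact. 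Integrating from $0$ to $s$ and applying L'Hospital's rule in the spirit of the derivation of \pref{inteqq1} should yield $A(\alpha(s)) \to 0$, hence $\alpha(s) \to \theta_i$ by the strict monotonicity of $A$ on the sector. Step~3: the identity $r\alpha' = \sin(\theta-\alpha)$ together with $r(s) \to 0$ and the information from step~2 will force $\theta(s) - \alpha(s) \to 0$, so $\vecx'(s) = \vece(\theta(s)) \to \vece(\theta_i)$; the $s \to -0$ case follows by reversing the orientation.

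The main obstacle will be Step~2. In Case~(i) only the single term $j = i$ carries a $1/v$ singularity, so multiplication by $v^{n_i}v'$ cleanly isolates it. In Case~(ii) every term $j \in J$ contributes a singularity of the same order $1/r$, and the integrating factor must resolve them all simultaneously. For Types~II--III, where $|J| \le 3$, the resulting combinatorics is tractable by direct calculation as in \cite{KenNaga}; for Types~IV--V ($|J| = 4$ or $6$), the ``extra procedure'' alluded to in the introduction -- likely a preliminary angular shift centering $\alpha$ at $\theta_i$, or a refined symmetrization of the sum about $\theta_i$ -- should be needed to bring the integral representation into a form amenable to the L'Hospital-type limit calculation. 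This universal device is what I expect will make the proof work uniformly across all types.
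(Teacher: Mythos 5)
There is a genuine gap in Step~2, which is the heart of the proposition. An integrating factor $r^{n-2}\prod_{j\in J}|\sin(\alpha-\phi_j)|^{n_j}$ (i.e.\ $\prod_j|\vece(\phi_j)^\bot\cdot\vecx|^{n_j}$) has logarithmic derivative $\sum_j n_j\sin(\theta-\phi_j)/(r\sin(\alpha-\phi_j))$, whereas the singular sum in the curvature equation is $\sum_j n_j\cos(\theta-\phi_j)/(r\sin(\alpha-\phi_j))$; the two are not parallel, so no exact derivative is produced. More fundamentally, even with a usable integral representation, L'Hospital's rule cannot \emph{prove} that $A(\alpha(s))\to 0$: it can only \emph{identify} a limit that one already knows exists. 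In Case~(i) this is harmless because the terms with $j\ne i$ stay bounded, so the bracketed quantity has a limit; in Case~(ii) every term blows up like $1/\|\vecx\|$ and the bracket oscillates if $\vecx'$ oscillates. The whole difficulty is ruling out that oscillation, and your proposal leaves it unaddressed.

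The paper overcomes exactly this obstacle with a different technique. After establishing the barrier bounds (your Step~1, corresponding to the paper's Lemma~4.2 and its Corollary), it introduces $w=v/u$ and $z=v'/u'$, sets $L=\tan(\theta_i-\phi_i)$, and shows by a Lyapunov-type contradiction argument that $\frac{d}{ds}\bigl[(w-L)^2+(z-L)^2\bigr]\le -\delta/(2v(s))<0$ along any putative excursion away from $\underline L$, using the strict sign of $A$ at $\underline\theta<\theta_i$; this forces $\liminf w=\limsup w$. Then a second short lemma shows $z$ converges by evaluating the equation at critical points of $z$ and invoking $\partial A/\partial\alpha<0$. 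None of this is an integrating-factor or L'Hospital computation. Your Step~3 is likewise not forced by the kinematic identity alone: $r\alpha'=\sin(\theta-\alpha)$ with $r\to0$ does not by itself give $\theta-\alpha\to0$ pointwise; the paper derives $\lim v'/u'$ from the critical-point argument and only then transfers information between $w$ and $z$ via L'Hospital for $\limsup/\liminf$.

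Finally, the ``extra procedure'' you invoke for Types IV--V is misattributed. In the paper that device (the spectral diagonalization of the linear part via the matrix $P$, needed because $\gamma>2$ makes the naive contraction fail) appears only in the \emph{solvability} proof, Proposition~\ref{prop4.2}; the asymptotic analysis in Proposition~\ref{prop4.1} is carried out uniformly across all types with the same Lyapunov argument, and needs no such modification.
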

\par
This proposition is proved by a series of Lemmas.
In what follows,
$ \vecx (s) $ satisfies the assumption in Proposition \ref{prop4.1}.
For simplicity we consider only the case $ s \to + 0 $,
and assume that $ \vecx (s) $ is defined on $ ( 0 , s_0 ] $.
As in the previous section,
we transform $ \vecx $ to $ \vecu $ with $ \psi = - \phi_i $.
Then it holds that
\[
	\left( v^{ n_i } u^\prime \right)^\prime
	=
	\left\{
	( n - 1 ) H
	-
	\sum_{ j \ne i }
	\frac
	{ n_j \vece ( \phi_j - \phi_i ) \cdot \vecu^\prime }
	{ \vece ( \phi_j - \phi_i )^\bot \cdot \vecu }
	\right\}
	v^{ n_i } v^\prime
	.
\]
We integrate this from $ s_0 $ to $ s \in ( 0 , s_0 ) $,
and get
\[
	\begin{array}{rl}
	v^{ n_i } (s) u^\prime (s)
	= & \zume
	\displaystyle{
	\int_{ s_0 }^s
	\left\{
	( n - 1 ) H(t)
	-
	\sum_{ j \ne i }
	\frac
	{ n_j \vece ( \phi_j - \phi_i ) \cdot \vecu^\prime (t) }
	{ \vece ( \phi_j - \phi_i )^\bot \cdot \vecu (t) }
	\right\}
	v^{ n_i } (t) v^\prime (t) \,
	dt
	}
	\\
	& \qquad \qquad \qquad \qquad \qquad \qquad \qquad \qquad
	+ \,
	v^{ n_i } ( s_0 ) u^\prime ( s_0 )
	.
	\end{array}
\]
Since the left-hand side tends to $ 0 $ as $ s \to + 0 $,
so does the right-hand side.
Hence,
we get
\be
	\begin{array}{rl}
	u^\prime (s)
	= & \zume 
	\displaystyle{
	\frac 1 { v^{ n_i } (s) }
	\left[
	\int_{ s_0 }^s
	\left\{
	( n - 1 ) H(t)
	-
	\sum_{ j \ne i }
	\frac
	{ n_j \vece ( \phi_j - \phi_i ) \cdot \vecu^\prime (t) }
	{ \vece ( \phi_j - \phi_i )^\bot \cdot \vecu (t) }
	\right\}
	v^{ n_i } (t) v^\prime (t) \,
	dt
	\right.
	}
	\\
	& \qquad \qquad \qquad \qquad \qquad \qquad \qquad \qquad \qquad \qquad
	\displaystyle{
	\left.
	+ \,
	v^{ n_i } ( s_0 ) u^\prime ( s_0 )
	\vphantom{ \left\{\left( \sum_{ j \in J } \right)\right\} }
	\right]
	}
	.
	\end{array}
	\label{uprimeinlem2.1}
\ee
Next we shall apply L'Hospital's theorem to this.
\begin{lem}
If the limit $ \vecu^\prime (s) $ as $ s \to + 0 $ exists,
then it holds that
\[
	\lim_{ s \to + 0 } \vecu^\prime (s) = \vece ( \theta_i - \phi_i ) .
\]
\label{iflimitexists}
\end{lem}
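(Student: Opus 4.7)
The plan is to apply L'Hospital's theorem to the representation \pref{uprimeinlem2.1} of $u^\prime(s)$ and read off the limit from the resulting asymptotic equation. Write $\vece(\alpha) = \displaystyle\lim_{s\to+0}\vecu^\prime(s)$. Since $\vecu(s)\to\veco$ and $\vecu(s) = \int_0^s\vecu^\prime(t)\,dt$, the average of $\vecu^\prime$ on $(0,s)$ gives $\vecu(s)/s\to\vece(\alpha)$ as $s\to+0$, so in particular
\[
	\frac{\vece(\phi_j-\phi_i)^\bot\cdot\vecu(s)}{s}
	\to \sin(\alpha-(\phi_j-\phi_i)),
	\qquad
	\frac{v(s)}{s}\to\sin\alpha.
\]
Before invoking L'Hospital I would first check that $v^\prime(s)\ne 0$ for $s$ near $0$ by repeating the contradiction argument in Proposition~\ref{prop3.1}: if $v^\prime(s_k)=0$ along $s_k\to +0$, then $u^\prime(s_k)=\pm 1$, $u^{\prime\prime}(s_k)=0$, and evaluating the ODE at $s_k$ forces $v^{\prime\prime}(s_k)\sim n_i/v(s_k)\to+\infty$, so each $v(s_k)$ is a strict local minimum, which is incompatible with $v\to +0$.

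With $v^\prime\ne 0$ near $0$, both the bracket in \pref{uprimeinlem2.1} and $v^{n_i}(s)$ tend to $0$, and L'Hospital yields
\[
	\cos\alpha
	=
	\lim_{s\to+0}\frac{v(s)}{n_i}
	\Biggl\{
	(n-1)H(s)
	-
	\sum_{j\ne i}\frac{n_j\,\vece(\phi_j-\phi_i)\cdot\vecu^\prime(s)}{\vece(\phi_j-\phi_i)^\bot\cdot\vecu(s)}
	\Biggr\}.
\]
The $H$-term vanishes because $v(s)\to 0$ and $H$ is bounded near $0$. Substituting the asymptotics above (assuming for the moment $\sin\alpha\ne 0$ and $\sin(\alpha-(\phi_j-\phi_i))\ne 0$ for $j\ne i$) this becomes
\[
	\cos\alpha
	=
	-\frac{\sin\alpha}{n_i}\sum_{j\ne i} n_j\cot(\alpha-(\phi_j-\phi_i)),
\]
i.e.\ $\sum_{j\in J} n_j\cot((\alpha+\phi_i)-\phi_j)=0$ after multiplying by $n_i/\sin\alpha$ and adding back the $j=i$ term. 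Because $\vecu$ lies in the sector of angles in $[0,\phi_{i+1}-\phi_i]$, $\alpha+\phi_i\in[\phi_i,\phi_{i+1}]$, and the strict monotonicity of $A$ on $(\phi_i,\phi_{i+1})$ noted just after \pref{functionA} forces $\alpha+\phi_i=\theta_i$.

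The main obstacle, I expect, is ruling out the degenerate boundary cases $\alpha = 0$ and $\alpha = \phi_{i+1}-\phi_i$, where the convenient asymptotic cancellations above break down. For $\alpha=0$ one has $v(s)/s\to 0$ while each $\vece(\phi_j-\phi_i)^\bot\cdot\vecu(s)/s$ has a nonzero limit for $j\ne i$, so the right-hand side of the L'Hospital equation tends to $0$ while the left-hand side is $\cos 0 = 1$, a contradiction. For $\alpha=\phi_{i+1}-\phi_i$ the $j=i+1$ denominator $\vece(\phi_{i+1}-\phi_i)^\bot\cdot\vecu(s)$ vanishes to higher order than $s$ while its numerator does not, forcing the bracket to diverge, contradicting the finite value $\cos(\phi_{i+1}-\phi_i)$ on the left. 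Both degeneracies being excluded, the identification $\alpha=\theta_i-\phi_i$ is complete.
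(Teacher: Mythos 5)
Your argument follows the same overall route as the paper: apply L'Hospital to the integral representation \pref{uprimeinlem2.1}, rule out the boundary angles $0$ and $\phi_{i+1}-\phi_i$, and identify the interior angle via the strict monotonicity of $A$ on $(\phi_i,\phi_{i+1})$. There is, however, a gap in your preliminary step of establishing $v'(s)\ne 0$ near $s=0$ by ``repeating the contradiction argument in Proposition~\ref{prop3.1}.'' That argument relied on the denominators $\vece(\phi_j-\phi_i)^\bot\cdot\vecu(s_k)$ for $j\ne i$ converging to the nonzero value $-u(0)\sin(\phi_j-\phi_i)$, so that the relation $v''(s_k)\sim n_i/v(s_k)$ was immediate. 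In Case (ii) one has $\vecu(s_k)\to\veco$, so those denominators vanish as well, and the dominance of $n_i/v(s_k)$ does not follow verbatim. A short repair uses the hypothesis of the lemma: if $v'(s_k)=0$ along $s_k\to+0$, then $\vecu'(s_k)\to\pm\vece(0)$, so the assumed limit must be $\vece(0)$; then $v(s_k)/s_k\to 0$ by the mean-value theorem, while $\vece(\phi_j-\phi_i)^\bot\cdot\vecu(s_k)\sim -s_k\sin(\phi_j-\phi_i)$ is of order $s_k$ for $j\ne i$, so $n_i/v(s_k)$ does dominate after all. (The paper's proof of this lemma simply invokes L'Hospital without addressing $v'\ne 0$; that fact is dealt with only in the subsequent lemma.) Your handling of the boundary case $\alpha=\phi_{i+1}-\phi_i$ by tracking the divergence of the $j=i+1$ term is a valid alternative to the paper's device of applying the rotation $R(-\phi_{i+1})$ and reusing the $\alpha=0$ argument by symmetry; otherwise the two proofs coincide.
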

\begin{proof}
Since $ \vecu^\prime (s) $ is a unit vector,
so is its limit,
say $ \vece ( \psi_\ast )$.
Under the assumption we can apply L'Hospital's theorem to \pref{uprimeinlem2.1},
and get
\be
	\begin{array}{rl}
	\cos \psi_\ast
	= & \zume
	\displaystyle{
	\frac 1 { n_i }
	\lim_{ s \to +0 }
	\left\{
	( n - 1 ) H(s)
	-
	\sum_{ j \ne i }
	\frac
	{ n_j \vece ( \phi_j - \phi_i ) \cdot \vecu^\prime (s) }
	{ \vece ( \phi_j - \phi_i )^\bot \cdot \vecu (s) }
	\right\}
	v(s)
	}
	\\
	= & \zume
	\displaystyle{
	- \frac 1 { n_i }
	\lim_{ s \to +0 }
	\sum_{ j \ne i }
	\frac
	{ n_j \vece ( \phi_j - \phi_i ) \cdot \vece ( \psi_\ast ) v(s) }
	{ \vece ( \phi_j - \phi_i )^\bot \cdot \vecu (s) }
	}
	\\
	= & \zume
	\displaystyle{
	- \frac 1 { n_i }
	\lim_{ s \to +0 }
	\sum_{ j \ne i }
	\frac
	{ n_j \cos ( \phi_j - \phi_i - \psi_\ast ) v(s) }
	{ \vece ( \phi_j - \phi_i )^\bot \cdot \vecu (s) }
	}
	.
	\end{array}
	\label{thetaastinlem2.1}
\ee
Since $ \vecu \in S_0 $ in $ uv $-plane,
it holds that $ \psi_\ast \in [ 0 , \phi_{ i+1 } - \phi_i ] $.
\par
We will show $ \psi_\ast \in ( 0 , \phi_{ i+1 } - \phi_i ) $.
Assume $ \psi_\ast = 0 $,
and then
\[
	\vece ( \phi_j - \phi_i )^\bot \cdot \vece ( \psi_\ast )
	= \sin ( \phi_j - \phi_i ) \ne 0
\]
for $ j \ne i $.
By use of L'Hopital's theorem again we have
\[
	\lim_{ s \to +0 }
	\frac { v(s) } { \vece ( \phi_j - \phi_i )^\bot \cdot \vecu (s) }
	=
	\lim_{ s \to +0 }
	\frac { v^\prime (s) } { \vece ( \phi_j - \phi_i )^\bot \cdot \vecu^\prime (s) }
	=
	\frac { \sin \psi_\ast } { \sin ( \phi_j - \phi_i ) }
	= 0
	.
\]
Hence,
from \pref{thetaastinlem2.1} it follows that
\[
	\cos \psi_\ast
	=
	- \frac 1 { n_i } \times 0 = 0
	.
\]
This contradicts  the asumption $ \psi_\ast = 0 $.
We can show $ \psi_\ast \ne \phi_{ i+1 } - \phi_i $ in a similar argument to $ R ( - \phi_{ i+1 } ) \vecx $.
\par
We have already known $ \psi_\ast \in ( 0 , \phi_{ i+1 } - \phi_i ) $,
and therefore
\[
	\vece ( \phi_j - \phi_i )^\bot \cdot \vece ( \psi_\ast )
	=
	\sin ( \phi_j - \phi_i - \psi_\ast )
	\ne 0
\]
for $ j \ne i $.
We obtain
\[
	\begin{array}{rl}
	\cos \psi_\ast
	= & \zume
	\displaystyle{
	- \frac 1 { n_i }
	\lim_{ s \to +0 }
	\sum_{ j \ne i }
	\frac
	{ n_j \cos ( \phi_j - \phi_i - \psi_\ast ) v(s) }
	{ \vece ( \phi_j - \phi_i )^\bot \cdot \vecu (s) }
	}
	\\
	= & \zume
	\displaystyle{
	- \frac 1 { n_i }
	\lim_{ s \to +0 }
	\sum_{ j \ne i }
	\frac
	{ n_j \cos ( \phi_j - \phi_i - \psi_\ast ) v^\prime (s) }
	{ \vece ( \phi_j - \phi_i )^\bot \cdot \vecu^\prime (s) }
	}
	\\
	= & \zume
	\displaystyle{
	- \frac { \sin \psi_\ast } { n_i }
	\sum_{ j \ne i }
	\frac
	{ n_j \cos ( \phi_j - \phi_i - \psi_\ast) }
	{ \vece ( \phi_j - \phi_i )^\bot \cdot \vece ( \psi_\ast ) }
	}
	\\
	= & \zume
	\displaystyle{
	- \frac { \sin \psi_\ast } { n_i }
	\sum_{ j \ne i }
	\frac
	{ n_j \cos ( \phi_j - \phi_i - \psi_\ast ) }
	{ - \sin ( \phi_j - \phi_i - \psi_\ast ) }
	}
	\\
	= & \zume
	\displaystyle{
	-
	\frac { \sin \psi_\ast } { n_i }
	\sum_{ j \ne i }
	n_j \cot ( \psi_\ast + \phi_i - \phi_j )
	}
	\end{array}
\]
by L'Hospital's theorem.
This shows
\[
	\sum_{ j \in J }
	n_j \cot ( \psi_\ast + \phi_i - \phi_j )
	= 0
	,
\]
and therefore $ \psi_\ast = \theta_i - \phi_i $.
\qed
\end{proof}
\begin{lem}
On a neighborhood of $ s \to + 0 $,
we have
\[
	u^\prime (s) > 0 ,
	\quad
	v^\prime (s) > 0 ,
	\quad
	0 < \frac { v^\prime (s) } { u^\prime (s) } < \tan ( \phi_{ i+1 } - \phi_i )
	.
\]
\end{lem}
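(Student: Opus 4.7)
The three inequalities together are equivalent to saying that, writing $\vecu'(s) = \vece(\beta(s))$ with $\beta$ continuous (possible since $\|\vecu'\| \equiv 1$), one has $\beta(s) \in (0, \phi_{i+1} - \phi_i)$ on some $(0, s_1]$. The plan is to rule out $\beta$ touching either boundary value along a sequence $s_k \to +0$, mimicking the contradiction argument for $v' \ne 0$ in the proof of Proposition~\ref{prop3.1}, but using additional structure forced by $\vecu(s) \to \veco$.

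First I would prove $v'(s) > 0$ near $s = +0$. Suppose to the contrary that $v'(s_k) = 0$ for some $s_k \to +0$. The archlength relation and its derivative give $u'(s_k) = \epsilon_k \in \{\pm 1\}$ and $u''(s_k) = 0$. Substituting into \pref{equationu1} and writing $\vecu(s_k) = r(s_k)\vece(\alpha(s_k))$ with $\alpha(s_k) \in (0, \phi_{i+1} - \phi_i)$, the trigonometric identity
\[
\frac{\cos(\phi_j - \phi_i)}{\sin(\alpha - \phi_j + \phi_i)}
= \cos\alpha \, \cot(\alpha - \phi_j + \phi_i) + \sin\alpha
\]
collapses the otherwise singular sum into
\[
v''(s_k) = \frac{\cos\alpha(s_k)\,A(\alpha(s_k) + \phi_i) + (n-2)\sin\alpha(s_k)}{r(s_k)} - \epsilon_k (n-1) H(s_k),
\]
with $A$ the function from \pref{functionA}. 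Since $A$ is strictly decreasing on $(\phi_i, \phi_{i+1})$ with $A(\phi_i^+) = +\infty$ and $A(\theta_i) = 0$, the bracket is strictly positive and bounded away from zero whenever $\alpha(s_k) + \phi_i$ stays in $(\phi_i, \theta_i]$; combined with $r(s_k) \to 0$ this forces $v''(s_k) \to +\infty$, making each such $s_k$ a strict local minimum of $v$. A Rolle-type alternation then produces a sequence of local maxima $\tilde s_k \to +0$ with $v'(\tilde s_k) = 0$ and $v''(\tilde s_k) \le 0$, to which the same formula applies and (passing to a suitable sub-subsequence) yields $v''(\tilde s_k) \to +\infty$, a contradiction. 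The complementary regime $\alpha(s_k) + \phi_i \in (\theta_i, \phi_{i+1})$, where the bracket may flip sign, is handled by the symmetric transformation (Section~2 with $\psi = -\phi_{i+1}$ together with a reflection swapping the two boundary rays), which sends that half of the sector to the role just analyzed.

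Once $v' > 0$ is in force, the bound $v'/u' < \tan(\phi_{i+1} - \phi_i)$ follows by precisely the same argument carried out in the transformed frame that identifies the upper boundary of the sector with the horizontal axis; in those coordinates the condition becomes the already-established positivity of $v'$. The strict inequality $u' > 0$ then follows from $0 < v' < u' \tan(\phi_{i+1} - \phi_i)$ when $\phi_{i+1} - \phi_i < \pi/2$ (Types III--V); for Type II, where $\phi_{i+1} - \phi_i = \pi/2$ makes the slope bound vacuous, the symmetric transformation itself yields $u' > 0$ directly. The principal obstacle is the sign control of the bracket $\cos\alpha \, A(\alpha + \phi_i) + (n-2)\sin\alpha$: since $A$ vanishes at the interior angle $\theta_i$, no single sector coordinate frame supplies the needed sign throughout the sector, and running the ODE contradiction on both boundary frames in parallel, then patching the two regimes, is what ultimately closes the argument uniformly across all Types II--V.
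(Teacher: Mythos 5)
Your overall structure --- establish a sign for $v'$, repeat in the frame rotated by $-\phi_{i+1}$, and then combine the two linear inequalities on $\vecu'$ to get all three conclusions --- is exactly the route the paper takes. The paper's proof is, however, much shorter: it simply records that $v(s)\to 0^+$, $v(s)>0$ and $v'(s)\ne 0$ near $s=0$ force $v'(s)>0$, does the same for $R(-\phi_{i+1})\vecx$, and then derives $u'>0$ and the slope bound algebraically. What you have tried to supply, and what the paper leaves implicit, is an actual proof that $v'(s)\ne 0$ near $s=0^+$. That is a genuine gap to try to fill, so the ambition is right, but the mechanism you propose does not close it.

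The obstacle is precisely where you flag it, and the fix you offer is not a fix. When $v'(s_k)=0$ and $\theta(s_k)=\alpha(s_k)+\phi_i\in(\theta_i,\phi_{i+1})$, the bracket $\cos\alpha\,A(\alpha+\phi_i)+(n-2)\sin\alpha$ is not merely ``may flip sign''---it does go to $-\infty$ as $\theta\to\phi_{i+1}^-$ whenever $\cos(\phi_{i+1}-\phi_i)>0$ (Types III--V), so $v''(s_k)\to-\infty$ and those points are strict local \emph{maxima}; you get no contradiction with $v\to 0^+$ from that side. Passing to the frame $\psi=-\phi_{i+1}$ (with or without a reflection) does not rescue this, because it changes the quantity being controlled: at a zero of $v'$ one has $\vecu'(s_k)=\pm\vece(0)$, hence in the rotated frame $\tilde v'(s_k)=\mp\sin(\phi_{i+1}-\phi_i)\ne 0$, so the rotated-frame contradiction never triggers on the offending sequence. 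The two boundary-frame arguments thus live on disjoint events ($v'=0$ versus $\tilde v'=0$) and cannot be ``patched'' to exclude oscillation of the velocity direction across the sector. To make this line of attack work you would need a single argument controlling the velocity angle $\beta(s)$ uniformly in $(\phi_i,\phi_{i+1})$---for instance a monotonicity or Lyapunov-type quantity that rules out $\vecu'$ crossing both rays infinitely often---rather than two separate boundary-frame blowups. The last two steps of your proposal (the slope bound from the rotated frame, then $u'>0$ with the Type~II case handled separately) match the paper and are fine.
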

\begin{proof}
Since $ \displaystyle{ \lim_{ s \to + 0 } v(s) = 0 } $,
and since $ v(s) > 0 $,
$ v^\prime (s) \ne 0 $ for small $ s > 0 $,
we have
\[
	v^\prime (s) = \vece (0)^\bot \cdot \vecu^\prime (s) > 0
	.
\]
Applying a similar argument to $ R ( - \phi_{ i+1 } ) \vecx $,
we have
\[
	- \vece ( \phi_{ i+1 } - \phi_i )^\bot \cdot \vecu^\prime (s) > 0
	,
\]
{\em i.e.},
\[
	u^\prime (s) \sin ( \phi_{ i+1 } - \phi_i )
	-
	v^\prime (s) \cos ( \phi_{ i+1 } - \phi_i )
	> 0
	.
\]
Because of $ 0 < \phi_{ i+1 } - \phi_i \leqq \frac \pi 2 $,
we have $ \sin ( \phi_{ i+1 } - \phi_i ) > 0 $,
$ \cos ( \phi_{ i+1 } - \phi_i ) \geqq 0 $.
Therefore,
we obtain
\[
	u^\prime (s)
	>
	\frac { v^\prime (s) \cos ( \phi_{ i+1 } - \phi_i ) }
	{ \sin ( \phi_{ i+1 } - \phi_i ) }
	\geqq 0
	,
	\quad
	0 < \frac { v^\prime (s) } { u^\prime (s) }
	< \tan ( \phi_{ i+1 } - \phi_i )
\]
\qed
\end{proof}
\begin{cor}
It holds that
\[
	0 \leqq
	\liminf_{ s \to + 0 } \frac { v^\prime (s) } { u^\prime (s) }
	\leqq
	\liminf_{ s \to + 0 } \frac { v (s) } { u (s) }
	\leqq
	\limsup_{ s \to + 0 } \frac { v (s) } { u (s) }
	\leqq
	\limsup_{ s \to + 0 } \frac { v^\prime (s) } { u^\prime (s) }
	\leqq
	\tan ( \phi_{ i+1 } - \phi_i )
	.
\]
\label{boundsofliminfsup}
\end{cor}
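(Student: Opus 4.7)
The plan is to derive the chain of inequalities by combining the pointwise bounds from the preceding Lemma with a Stolz--Ces\`aro-type argument based on Cauchy's mean value theorem. The two outermost inequalities, namely $0 \leqq \liminf_{s\to +0} v'(s)/u'(s)$ and $\limsup_{s\to +0} v'(s)/u'(s) \leqq \tan(\phi_{i+1}-\phi_i)$, are immediate consequences of the pointwise bound $0 < v'(s)/u'(s) < \tan(\phi_{i+1}-\phi_i)$ proved in the preceding Lemma, while the central inequality $\liminf (v/u) \leqq \limsup (v/u)$ is tautological. The substantive content therefore lies in the two comparisons relating $\liminf$ (resp.\ $\limsup$) of $v'/u'$ to that of $v/u$.

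For the $\limsup$ inequality, I would fix $\varepsilon > 0$ and choose $\delta > 0$ so that $v'(s)/u'(s) < L + \varepsilon$ for every $s \in (0,\delta)$, where $L := \limsup_{s \to +0} v'(s)/u'(s)$. For any $0 < s_1 < s_2 < \delta$, Cauchy's mean value theorem applied to $v$ and $u$ on $[s_1,s_2]$---whose hypotheses hold because $u'(s) > 0$ there by the preceding Lemma---yields some $\xi \in (s_1,s_2)$ with $v(s_2) - v(s_1) = (v'(\xi)/u'(\xi))(u(s_2) - u(s_1))$. Since $u' > 0$ makes $u$ strictly increasing near $0$, and the hypothesis $\vecx(s) \to \veco$ combined with the continuity of the rotation $\vecu = R(-\phi_i)\vecx$ gives $u(s_1), v(s_1) \to 0$ as $s_1 \to +0$, I can let $s_1 \to +0$ in the mean-value identity to deduce $v(s_2) \leqq (L+\varepsilon)\, u(s_2)$ for every $s_2 \in (0,\delta)$. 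Passing $s_2 \to +0$ and then $\varepsilon \to 0$ yields $\limsup (v/u) \leqq L$, as desired. The liminf inequality $\liminf v'/u' \leqq \liminf v/u$ follows by the entirely symmetric argument.

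The proof presents no real obstacle: after invoking the preceding Lemma, every step reduces to a routine verification of the hypotheses of Cauchy's mean value theorem and a standard $\varepsilon$-passage to the limit. The only delicate point worth noting is ensuring that $u(s)$ and $v(s)$ tend to $0$ from above as $s \to +0$, so that the quotient $v/u$ is well-defined on a right-neighborhood of $0$; this is guaranteed because $u'(s) > 0$ near $s = +0$ forces $u(s) > 0$ once $u(s) \to 0$, and the argument for $v$ is analogous.
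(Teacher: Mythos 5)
Your proposal is correct and follows essentially the same route as the paper: the paper cites the liminf/limsup version of L'Hospital's theorem together with the preceding lemma, while you unfold the standard proof of that L'Hospital variant via Cauchy's mean value theorem. The verification of the hypotheses ($u' > 0$ near $0$, $u, v \to 0$) is handled correctly, so nothing is missing.
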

\begin{proof}
It is by virtue of previous lemma and L'Hospital's theorem for limit superior and limit inferior.
\qed
\end{proof}
\begin{lem}
There exists the limit of $ \displaystyle{ \frac { \vecu (s) } { \| \vecu (s) \| } } $ as $ s \to + 0 $.
\end{lem}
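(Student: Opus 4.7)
The plan is to show that the polar angle $ \omega (s) = \arctan ( v(s) / u(s) ) $ of $ \vecu (s) $ has a limit as $ s \to + 0 $, which is equivalent to the claim. Setting $ W(s) = \| \vecu (s) \| $ and defining the tangent angle $ \psi (s) $ by $ \vecu^\prime (s) = \vece ( \psi (s) ) $, the polar decomposition $ \vecu = W \vece ( \omega ) $ together with $ \| \vecu^\prime \| \equiv 1 $ yields the key identities
\[
	W^\prime (s) = \cos ( \psi (s) - \omega (s) ) ,
	\quad
	W(s) \omega^\prime (s) = \sin ( \psi (s) - \omega (s) ) .
\]
By the previous lemma and the assumption $ \vecu (s) \in S_0 $ in the $ uv $-plane, both $ \omega (s), \psi (s) \in ( 0 , \phi_{ i+1 } - \phi_i ) $ for $ s $ small, and $ \phi_{ i+1 } - \phi_i \leqq \frac \pi 2 $ gives $ | \psi - \omega | < \frac \pi 2 $; in particular the critical points of $ \omega $ are exactly the points where $ \psi = \omega $.

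I would split the argument into two cases. If $ \omega^\prime $ has constant sign on some $ ( 0, \delta ] $, then $ \omega $ is bounded and monotone, hence converges. Otherwise, choose a sequence of critical points $ s_k \downarrow 0 $ of $ \omega $ and set $ \eta_k = \omega ( s_k ) = \psi ( s_k ) $. Differentiating $ W \omega^\prime = \sin ( \psi - \omega ) $ at $ s = s_k $ yields $ \omega^{ \prime \prime } ( s_k ) = \psi^\prime ( s_k ) / W ( s_k ) $. Substituting $ \omega = \psi = \eta_k $ into \pref{equationu1} and applying, for each $ j \in J $, the identity
\[
	\frac { \vece ( \phi_j - \phi_i ) \cdot \vecu^\prime ( s_k ) }
	{ \vece ( \phi_j - \phi_i )^\bot \cdot \vecu ( s_k ) }
	=
	\frac { \cot ( \eta_k - ( \phi_j - \phi_i ) ) } { W ( s_k ) }
\]
reduces the equation at $ s_k $ to
\[
	- \psi^\prime ( s_k )
	+ \frac { A ( \eta_k + \phi_i ) } { W ( s_k ) }
	= ( n - 1 ) H ( s_k ) ,
\]
with $ A $ as in \pref{functionA}. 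The second-derivative test gives $ \psi^\prime ( s_k ) \leqq 0 $ at local maxima of $ \omega $ and $ \psi^\prime ( s_k ) \geqq 0 $ at local minima, so $ A ( \eta_k + \phi_i ) \leqq C W ( s_k ) $ at maxima and $ A ( \eta_k + \phi_i ) \geqq - C W ( s_k ) $ at minima, for a constant $ C $ depending on $ \| H \|_{ L^\infty } $ on a neighbourhood of $ 0 $.

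Since $ W ( s_k ) \to 0 $ and $ A $ is strictly decreasing on $ ( \phi_i , \phi_{ i+1 } ) $ with $ A ( \theta_i ) = 0 $, these inequalities, combined with the alternation of local maxima and minima and the speed bound $ | W \omega^\prime | \leqq 1 $ (which controls the oscillation amplitude on each inter-extremum interval), should force $ \eta_k \to \theta_i - \phi_i $ along both extremum subsequences. Monotonicity of $ \omega $ between consecutive critical points then confines $ \omega (s) $ on each such interval to the range with endpoints the two extremum values, so $ \omega (s) \to \theta_i - \phi_i $ as $ s \to + 0 $ and hence $ \vecu (s) / \| \vecu (s) \| \to \vece ( \theta_i - \phi_i ) $. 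The main obstacle in carrying this out is precisely the upgrade from the one-sided bounds on $ A ( \eta_k + \phi_i ) $ at maxima and minima—which on their own yield only $ \liminf \eta_k^{ \max } \geqq \theta_i - \phi_i $ and $ \limsup \eta_k^{ \min } \leqq \theta_i - \phi_i $—to the two-sided convergence of both subsequences to $ \theta_i - \phi_i $.
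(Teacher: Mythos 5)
The framework here is sound as far as it goes: the polar decomposition $\vecu = W\vece(\omega)$, $\vecu'=\vece(\psi)$, the identities $W'=\cos(\psi-\omega)$ and $W\omega'=\sin(\psi-\omega)$, and the reduction of \pref{equationu1} at a critical point of $\omega$ (where $\psi=\omega=\eta_k$) to
\[
-\psi'(s_k)+\frac{A(\eta_k+\phi_i)}{W(s_k)}=(n-1)H(s_k)
\]
are all correct. But the step you flag at the end is a genuine gap, not a routine upgrade, and the two ingredients you invoke to close it do not do so. The second-derivative test gives $A(\eta_k+\phi_i)\leqq C W(s_k)$ at local maxima and $A(\eta_k+\phi_i)\geqq -CW(s_k)$ at local minima; since $A$ is strictly decreasing through $0$ at $\theta_i$, a maximum value \emph{above} $\theta_i-\phi_i$ already makes $A<0$, so the inequality at a maximum is vacuous there, and symmetrically at minima. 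The only non-vacuous content is $\liminf\eta_k^{\max}\geqq\theta_i-\phi_i$ and $\limsup\eta_k^{\min}\leqq\theta_i-\phi_i$; nothing prevents the maxima from staying bounded away above $\theta_i-\phi_i$ and the minima from staying bounded away below, i.e.\ a persistent oscillation. The ``speed bound'' $|W\omega'|\leqq 1$ is of no help either: it reads $|\omega'|\leqq 1/W$ with $W\to 0$, so it permits arbitrarily fast angular motion near the origin rather than controlling the excursion between consecutive extrema. Alternation of extrema adds no further constraint. So the extremum analysis alone does not yield convergence, and a different mechanism is needed.

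The paper closes exactly this gap by a different route. It works with $w=v/u$ and $z=v'/u'$ simultaneously, computes $\frac{d}{ds}(w-L)^2$ and $\frac{d}{ds}(z-L)^2$ along the flow (with $L=\tan(\theta_i-\phi_i)$), and, on intervals where $(w,z)$ stays in a small disk around $(\underline L,\underline L)$, shows that the singular term forces
\[
\frac12\frac{d}{ds}\left\{(w-L)^2+(z-L)^2\right\}\leqq -\frac{\delta}{2v(s)}<0
\]
once $v$ is small. This monotonicity of a Lyapunov-type quantity on the exit interval from the disk produces a point of $\partial B_\epsilon$ with $w<\underline L$ or $z<\underline L$, contradicting the definition of $\underline L$; a symmetric argument rules out $\bar L\ne\underline L$ with $\bar L>L$. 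In short, the paper estimates the sign of a derivative along an entire sub-interval where $(w,z)$ is trapped near the putative $\liminf$, not just at isolated critical points, and that is precisely the extra quantitative information your extremum-only argument is missing. (A side remark: the ideas you use here are closer in spirit to the paper's \emph{next} lemma, on the limit of $\vecu'(s)$, which analyzes extrema of $v'/u'$ and uses the strict monotonicity of $A(\alpha,\beta)$ in $\alpha$; that argument, however, already presupposes the conclusion of the present lemma.)
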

\begin{proof}
Put
\[
	w(s) = \frac { v(s) } { u(s) },
	\quad
	z(s) = \frac { v^\prime (s) } { u^\prime (s) } ,
	\quad
	\liminf_{ s \to b - 0 } w(s) = \underline L ,
	\quad
	\limsup_{ s \to b - 0 } w(s) = \bar L ,
\]
and
\[
	L = \tan ( \theta_i - \phi_i ) .
\]
\par
Assume $ \underline L \ne \bar L $ and $ \underline L < L $.
And,
then,
taking into consideration of the shape of the geretating curve,
there exist sequences $ \{ s_j \} $ and $ \{ \tilde s_j \} $ such that
\[
	s_j > \tilde s_j > s_{ j+1 } > \tilde s_{ j+1 } ,
	\quad
	\lim_{ j \to \infty } s_j = + 0 ,
	\quad
	\lim_{ j \to \infty } \tilde s_j = + 0 ,
\]
\[
	\lim_{ j \to \infty } w( s_j ) = \underline L ,
	\quad
	\lim_{ j \to \infty } w ( \tilde s_j ) = \bar L ,
\]
\[
	\mbox{the generating curve is tangent to the line $ v = L_j u $ at $ s = s_j $,
	and } \lim_{ j \to \infty } L_j = \underline L .
\]
The last property implies
\[
	z ( s_j ) = L_j \to \underline L \quad \mbox{as} \quad j \to \infty .
\]
Put
\[
	B_\epsilon = \{ ( w,z ) \in \mathbb{R}^2 \, | \, ( w - \underline L )^2 + ( z - \underline L )^2 < \epsilon^2 \} .
\]
If $ \epsilon > 0 $ is sufficiently small,
then we may assume that
\[
	( w( s_j ) , z( s_j ) ) \in B_\epsilon ,
	\quad
	( w( \tilde s_j ) , z( \tilde s_j ) ) \in B_\epsilon^c .
\]
Hence,
there exists $ \{ \hat s_j \} $ such that
\[
	s_j > \hat s_j > \tilde s_j ,
	\quad
	( w(s) , z(s) ) \in \bar B_\epsilon \quad \mbox{for} \quad s \in ( \hat s_j , s_j ] ,
	\quad
	( w( \hat s_j ) , z ( \hat s_j ) ) \in \partial B_\epsilon .
\]
Now we consider the behavior of $ ( w(s) , z(s) ) $ on the interval $ I_j = [ \hat s_j , s_j ] $.
Then
\[
	\begin{array}{rl}
	\displaystyle{
	\frac 12 \frac d { ds } ( w(s) - L )^2
	}
	= & \!\!\!
	\displaystyle{
	( w(s) -  L ) w^\prime (s)
	}
	\\
	= & \!\!\!
	\displaystyle{
	( w(s) - L ) 
	\frac { v^\prime (s) u(s) - v(s) u^\prime (s) } { u(s)^2 }
	}
	\\
	= & \!\!\!
	\displaystyle{
	\frac { u^\prime (s) } { u(s) }
	( w(s) - L ) 
	( z(s) - w(s) ) .
	}
	\end{array}
\]
When $ s \in I_j $,
\[
	| w(s) - L | \leqq C
	,
\]
\[
	| z(s) - w(s) |
	=
	| z(s) - \underline L - ( w(s) - \underline L ) |
	\leqq
	2 \epsilon .
\]
Therefore,
\[
	\left|
	\frac { u^\prime (s) } { u(s) }
	-
	\frac { v^\prime (s) } { v(s) }
	\right|
	=
	| w(s) - z(s) | \left| \frac { u^\prime (s) } { v(s) } \right|
	\leqq
	2 \epsilon \left| \frac { u^\prime (s) } { v(s) } \right| ,
\]
which implies
\[
	\frac { u^\prime (s) } { u(s) }
	=
	\frac { v^\prime (s) + O ( \epsilon ) u^\prime (s) } { v(s) }
	.
\]
Consequently,
\[
	\left|
	\frac 12 \frac d { ds } ( w(s) - L )^2
	\right|
	=
	\left|
	\frac { v^\prime (s) + O ( \epsilon ) y^\prime (s) } { v(s) }
	\right|
	O ( \epsilon )
	=
	\frac { O( \epsilon ) } { v(s) } .
\]
Here we use $ | u^\prime (s) | \leqq 1 $,
$ | v^\prime (s) | \leqq 1 $.
On the other hand
\[
	\begin{array}{rl}
	\displaystyle{
	\frac 12 \frac d { ds } ( z(s) - L )^2
	}
	= & \!\!\!
	\displaystyle{
	( z(s) - L ) z^\prime (s)
	}
	\\
	= & \!\!\!
	\displaystyle{
	( z(s) - L )
	\frac { v^{\prime\prime} (s) u^\prime (s) - v^\prime (s) u^{ \prime \prime } (s) } { ( u^\prime (s) )^2 }
	}
	\\
	= & \!\!\!
	\displaystyle{
	- \frac { z(s) - L } { ( u^\prime (s) )^2 }
	\vecu^{ \prime \prime } (s)^\bot \cdot \vecu^\prime (s)
	}
	\\
	= & \!\!\!
	\displaystyle{
	- \frac { z(s) - L } { ( u^\prime (s) )^2 }
	\left\{
	- \sum_{ j \in J }
	\frac { n_j \vece ( \phi_j - \phi_i ) \cdot \vecu^\prime (s) }
	{ \vece ( \phi_j - \phi_i )^\bot \cdot \vecu (s) }
	+
	( n - 1 ) H(s)
	\right\}
	}
	.
	\end{array}
\]
Define $ \hat \theta $ and $ \check \theta $ by $ w(s) = \tan ( \hat \theta (s) - \phi_i ) $,
and $ z(s) = \tan ( \check \theta (s) - \phi_i ) $.
Then
\[
	\vecu (s) = \frac { u(s) } { \cos ( \hat \theta (s) - \phi_i ) } \vece ( \hat \theta (s) - \phi_i ) ,
	\quad
	\vecu^\prime (s) = \frac { u^\prime (s) } { \cos ( \check \theta (s) - \phi_i ) } \vece ( \hat \theta (s) - \phi_i ) ,
\]
\[
	\frac { \vece ( \phi_j - \phi_i ) \cdot \vecu^\prime (s) }
	{ \vece ( \phi_j - \phi_i )^\bot \cdot \vecu (s) }
	=
	\frac { u^\prime (s) w(s) \cos ( \check \theta (s) - \phi_i ) \cos ( \check \theta (s) - \phi_j ) }
	{ v(s) \cos ( \hat \theta (s) - \phi_i ) \sin ( \hat \theta (s) - \phi_j ) }
	.
\]
Define $ \underline \theta $ by $ \underline L = \tan ( \underline \theta - \phi_i ) $.
When $ ( w(s) , z(s) ) \in B_\epsilon $,
we have
\[
	\hat \theta (s) = \underline \theta + O( \epsilon ),
	\quad
	\check \theta (s) = \underline \theta + O( \epsilon ) .
\]
Hence,
\[
	\begin{array}{rl}
	\displaystyle{
	\sum_{ j \in J }
	\frac { n_j \vece ( \phi_j - \phi_i ) \cdot \vecu^\prime (s) }
	{ \vece ( \phi_j - \phi_i )^\bot \cdot \vecu (s) }
	}
	= & \zume
	\displaystyle{
	\frac { u^\prime (s) \cos ( \check \theta (s) - \phi_i ) }
	{ u(s) \cos ( \hat \theta (s) - \phi_i ) }
	\sum_{ j \in J }
	n_j \left( \cot ( \underline \theta - \phi_j )
	+ O( \epsilon )
	\right)
	}
	\\
	= & \zume
	\displaystyle{
	\frac { u^\prime (s) \cos ( \check \theta (s) - \phi_i ) }
	{ u(s) \cos ( \hat \theta (s) - \phi_i ) }
	A( \underline \theta )
	+ O ( \epsilon )
	}
	.
	\end{array}
\]
Our assumption $ \underline L < L $ implies $ \underline \theta < \theta_i $,
and therefore $ A( \underline \theta ) > A( \theta_i ) = 0 $.
There exists $ \lambda \in [ 0 , 1 ) $ such that
\[
	0 < w(s) < \frac { ( 1 + \lambda ) L } 2 ,
	\quad
	0 < z(s) < \frac { ( 1 + \lambda ) L } 2 ,
	\quad
	0 < u^\prime (s) \leqq 0 ,
	\quad
	v(s) > 0
	,
\]
\[
	\cos ( \hat \theta (s) - \phi_i ) )
	=
	\cos ( \underline \theta - \phi_i ) + O ( \epsilon ) ,
	\quad
	\cos ( \check \theta (s) - \phi_i ) )
	=
	\cos ( \underline \theta - \phi_i ) + O ( \epsilon )
\]
hold on $ I_j $ for large $ j $.
Hence,
there exists $ \delta > 0 $ independent of $ \epsilon $ such that
\[
	\frac { z(s) - L } { ( u^\prime (s) )^2 }
	\sum_{ j \in J }
	\frac { n_j \vece ( \phi_j - \phi_i ) \cdot \vecu^\prime (s) }
	{ \vece ( \phi_j - \phi_i )^\bot \cdot \vecu (s) }
	\leqq
	- \frac \delta { v(s) } .
\]
On the interval $ I_j $,
\[
	\frac 1 { ( x^\prime (s) )^2 }
	=
	\frac { \underline L^2 ( 1 + o(1) ) } { ( y^\prime (s) )^2 }
	=
	\frac { \underline L^2 ( 1 + o(1) ) } { 1 - ( x^\prime (s) )^2 } .
\]
If there exists a sequence $ \displaystyle{ \{ \bar s_k \} \subset \bigcup_j I_j } $ such that
\[
	\lim_{ k \to \infty } \bar s_k = + 0 ,
	\quad
	\lim_{ k \to \infty } u^\prime ( \bar s_k ) = 0,
\]
then as $ k \to \infty $
\[
	\infty \leftarrow \frac 1 { ( u^\prime ( \bar s_k ) )^2 }
	=
	\frac { \underline L^2 ( 1 + o(1) ) } { 1 - ( u^\prime ( \bar s_k ) )^2 }
	\to \underline L^2 < L^2 .
\]
This is a contradiction, 
 therefore we may assume
\[
	\inf \left\{ ( u^\prime (s) )^2 \, \left| \,
	s \in \bigcup_j I_j \right. \right\} > 0 .
\]
Hence,
\[
	\left| \frac { ( n - 1 ) H(s) ( z(s) - L ) } { ( u^\prime (s) )^2 }
	\right|
	\leqq C .
\]
Consequently,
\[
	\frac 12 \frac d { ds }
	\left\{
	( z(s) - L )^2
	+
	( w(s) - L )^2
	\right\}
	\leqq
	- \frac 1 { v(s) }
	\left( \delta + O( \epsilon ) \right)
	+ C
\]
on $ I_j $.
If $ j $ is sufficiently large,
then $ v(s) > 0 $ is sufficiently small.
Taking $ \epsilon $ small,
we have
\[
	\frac 12 \frac d { ds }
	\left\{
	( z(s) - L )^2
	+
	( w(s) - L )^2
	\right\}
	\leqq
	- \frac \delta { 2 v(s) } < 0
\]
on $ I_j $ for large $ j $.
Hence,
\[
	\begin{array}{rl}
	\displaystyle{
	( w( \hat s_j ) - L )^2
	+
	( z( \hat s_j ) - L )^2
	}
	\geqq & \!\!\!
	\displaystyle{
	( z( s_j ) - L )^2
	+
	( w( s_j ) - L )^2
	}
	\\
	= & \!\!\!
	\displaystyle{
	2 ( L_j - L )^2 .
	}
	\end{array}
\]
Taking a suitable subsequence,
we have $ ( w( \hat s_j ) , z( \hat s_j ) ) \to ( \hat w , \hat z ) $,
where
\[
	( \hat w , \hat z )
	\in
	\partial B_\epsilon
	\cap
	\left\{
	( w , z ) \in \mathbb{R}^2 \, \left| \,
	( w - L )^2
	+
	( z - L )^2
	\geqq
	2 ( \underline L - L )^2 \right. \right\} .
\]
This shows that
\[
	\hat w < \underline L
	\quad \mbox{or} \quad
	\hat z < \underline L .
\]
This leads to a contradiction.
Indeed,
if $ \hat w < \underline L $,
then
\[
	\liminf_{ s \to + 0 } w(s) = \underline L
	> \hat w
	= \lim_{ j \to \infty } w( \hat s_j )
	\geqq
	\liminf_{ s \to + 0 } w(s) .
\]
If $ \hat z < \underline L $,
then
\[
	\liminf_{ s \to + 0 } z(s) = \underline L
	> \hat z
	= \lim_{ j \to \infty } z( \hat s_j )
	\geqq
	\liminf_{ s \to + 0 } z(s) .
\]
\par
Now we go back to the 5th line of the proof,
and $ \underline L = \bar L $ or $ L \leqq \underline L $ has been proved.
\par
Similarly we have $ \underline L = \bar L $ or $ \bar L \leqq L $.
\par
Combining these,
we finally get $ \underline L = \bar L $.
\qed
\end{proof}
\par
Put
\[
	A( \alpha , \beta )
	=
	\sum_{ j \in J } \frac { n_j \cos ( \alpha - \phi_j ) } { \sin ( \beta - \phi_j ) }
	.
\]
Here we assume $ \beta \ne \phi_j $ for all $ j \in J $.
It follows that
\[
	\frac { \partial A } { \partial \alpha }
	=
	- \sum_{ j \in J }
	\frac { n_j \sin ( \alpha - \phi_j ) } { \sin ( \beta - \phi_j ) }
	.
\]
When $ \phi_i < \alpha < \phi_{ i+1 } $ and $ \phi_i < \beta < \phi_{ i+1 } $,
it happens that
\[
	\mbox{\rm sgn} \sin ( \alpha - \phi_j )
	=
	\mbox{\rm sgn} \sin ( \beta - \phi_j )
	.
\]
Therefore,
we have
\[
	\frac { \partial A } { \partial \alpha } < 0
	.
\]
\begin{lem}
There exists the limit of $ \vecu^\prime (s) $ as $ s \to + 0 $ and
\[
	\lim_{ s \to + 0 } \vecu^\prime (s)
	=
	\lim_{ s \to + 0 } \frac { \vecu (s) } { \| \vecu (s) \| }
	=
	\vece ( \theta_i - \phi_i )
	.
\]
\end{lem}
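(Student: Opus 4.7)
The plan is to reduce the proof to showing that the angle $\psi(s)$ of $\vecu'(s) = \vece(\psi(s))$ has a limit as $s \to +0$, and then to combine with Lemma~\ref{iflimitexists} and a direct integration. Indeed, the preceding lemma furnishes $\vece(\varphi_*) := \lim_{s \to +0}\vecu(s)/\|\vecu(s)\|$; once $\lim_{s\to+0}\psi(s)$ is shown to exist, Lemma~\ref{iflimitexists} forces this limit to be $\theta_i - \phi_i$, and integrating $\vecu(s) = \int_0^s \vecu'(t)\,dt$ with $\|\vecu'\| \equiv 1$ yields $\vecu(s)/s \to \vece(\theta_i-\phi_i)$ and $\|\vecu(s)\|/s \to 1$, so the normalized limit $\vece(\varphi_*)$ must also equal $\vece(\theta_i - \phi_i)$. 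Both identities of the lemma then follow.

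To show $\lim_{s\to+0}\psi(s)$ exists, I would adapt the oscillation/Lyapunov scheme of the preceding lemma. Assume for contradiction that $\underline{M} := \liminf_{s\to+0}\psi(s) < \limsup_{s\to+0}\psi(s) =: \overline{M}$. Corollary~\ref{boundsofliminfsup} gives $\underline{M} \le \varphi_* \le \overline{M}$, so either $\underline{M} < \varphi_*$ or $\overline{M} > \varphi_*$; by the reflection $\vecx \mapsto R(-\phi_{i+1})\vecx$ exploited in Lemma~\ref{iflimitexists}, it suffices to treat $\underline{M} < \varphi_*$. Following the preceding lemma, choose sequences $s_j > \tilde s_j > s_{j+1} \searrow 0$ with the generating curve tangent to $v = L_j u$ at $s = s_j$ so that $z(s_j) = L_j \to \tan\underline{M}$ and with $\psi(\tilde s_j) \to \overline{M}$; take a small ball $B_\epsilon$ in the $(w,z)$-plane centered at $(\tan\varphi_*, \tan\underline{M})$; and define $\hat s_j \in (\tilde s_j, s_j)$ as the last exit time, so that $(w(s),z(s)) \in \overline{B}_\epsilon$ on $I_j = [\hat s_j, s_j]$ with $(w(\hat s_j), z(\hat s_j)) \in \partial B_\epsilon$.

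I would then compute $\frac{1}{2}\frac{d}{ds}\{(z-L)^2 + (w-L)^2\}$ on $I_j$, with $L = \tan(\theta_i-\phi_i)$, mimicking the preceding lemma. With $w = \tan(\hat\theta-\phi_i)$ and $z = \tan(\check\theta-\phi_i)$, the nonlocal sum reduces to $u'\cos(\check\theta-\phi_i)/(u\cos(\hat\theta-\phi_i)) \cdot A(\check\theta,\hat\theta) + O(\epsilon)$, where $\hat\theta \to \varphi_*+\phi_i$ and $\check\theta \to \underline{M} + \phi_i$ on $I_j$. Invoking the monotonicity $\partial A/\partial\alpha < 0$ displayed just before the lemma, I expect a decay bound of the form $\frac{1}{2}\frac{d}{ds}\{(z-L)^2 + (w-L)^2\} \le -\delta/(2v(s))$ for some $\delta > 0$ on $I_j$, yielding the same contradiction as in the preceding lemma (since $v \to +0$ makes $\int_{I_j} 1/v(s)\,ds$ diverge while the Lyapunov functional stays bounded on $\overline{B}_\epsilon$). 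The principal obstacle is the sign of $A(\underline{M} + \phi_i, \varphi_* + \phi_i)$: in the preceding lemma both arguments of $A$ converged to the common value $\underline\theta$, so one could appeal directly to $A(\underline\theta) > A(\theta_i) = 0$, whereas here $\hat\theta$ and $\check\theta$ tend to distinct limits, and a short case split on whether $\varphi_* \le \theta_i - \phi_i$ or $\varphi_* > \theta_i - \phi_i$ (the latter reducing to the former via the same reflection) will be needed to ensure $A(\underline{M} + \phi_i, \varphi_* + \phi_i)$ has the sign that cooperates with the sign of $z - L$ to drive the Lyapunov functional down on $I_j$.
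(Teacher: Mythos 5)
Your proposal follows a genuinely different route from the paper's, and I believe it has a real gap. The paper's own argument is much lighter: it does not run a Lyapunov estimate at all. Writing $z = v'/u'$, the paper observes that at a local minimum $s_k$ of $z$ one has $z'(s_k)=0$, which is equivalent to $\vecu''(s_k)^\bot\cdot\vecu'(s_k)=0$. Plugging this into the equation kills the curvature term outright, so the scaled nonlocal sum equals $\|\vecu(s_k)\|(n-1)H(s_k)$, which tends to $0$. Passing to the limit along these minima gives $A(\underline\theta',\bar\theta)=0$, and the same device at local maxima gives $A(\bar\theta',\bar\theta)=0$; since $\partial A/\partial\alpha < 0$, this forces $\underline\theta'=\bar\theta'$. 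The strict monotonicity of $A$ in its \emph{first} argument is invoked once, against a \emph{fixed} second argument $\bar\theta$, and no sign analysis of $A$ is ever needed.

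Your proposal instead tries to transplant the Lyapunov computation from the preceding lemma. Two things break. First, in that lemma the $(w-L)^2$ part of the functional contributes only $O(\epsilon)/v$ because on $I_j$ one has $z-w=O(\epsilon)$; in your setting $w\to\tan\varphi_*$ while $z$ is near $\tan\underline M$ with $\underline M<\varphi_*$, so $z-w=O(1)$, and $\frac12\frac{d}{ds}(w-L)^2=(w-L)\frac{u'w}{v}(z-w)$ is a full $O(1)/v$ term whose sign need not cooperate with the $(z-L)^2$ part. Second, and more seriously, the sign of $A(\underline M+\phi_i,\varphi_*+\phi_i)$ is not determined by what is known at this stage. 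You only know $\underline M\leqq\varphi_*\leqq\overline M$; the position of $\varphi_*$ relative to $\theta_i-\phi_i$ is precisely what the lemma is trying to pin down, so it cannot be assumed. Your proposed case split plus reflection does not close this: the reflection $\vecx\mapsto R(-\phi_{i+1})\vecx$ (composed with a sign flip to stay in the canonical sector) takes the configuration ``$\underline M<\varphi_*$ with $\varphi_*>\theta_i-\phi_i$'' to ``$\overline M>\varphi_*$ with $\varphi_*<\theta_i-\phi_i$'', i.e.\ one problematic configuration to the other, and it requires a symmetry of the data $\{(\phi_j,n_j)\}$ that holds for Types~III and V but has to be checked separately for Types~II and IV. In particular the sub-case $\underline M<\varphi_*=\overline M$ with $\varphi_*>\theta_i-\phi_i$ (or its mirror) falls outside your good regime no matter which side of the reflection you are on, so the contradiction does not materialize. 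I would recommend abandoning the Lyapunov scheme here and using the ``evaluate at critical points of $z$'' argument, which is both shorter and insensitive to these sign issues.
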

\begin{proof}
It is enough to show
\[
	\lim_{ s \to + 0 } \frac { v^\prime (s) } { u^\prime (s) }
	=
	\lim_{ s \to + 0 } \frac { v (s) } { u (s) }
	=
	\tan ( \theta_i - \phi_i )
	.
\]
If $ \displaystyle{ \frac { v^\prime (s) } { u^\prime (s) } } $ is monotone near $ s = + 0 $,
then there exists $ \displaystyle{ \lim_{ s \to + 0 } \frac { v^\prime (s) } { u^\prime (s) } } $.
\par
Otherwise, we put
\[
	\liminf_{ s \to + 0 } \frac { v^\prime (s) } { u^\prime (s) }
	=
	\tan ( \underline \theta^\prime - \phi_i ) ,
	\quad
	\limsup_{ s \to + 0 } \frac { v^\prime (s) } { u^\prime (s) }
	=
	\tan ( \bar \theta^\prime - \phi_i )
	.
\]
There exists a sequence $ \{ s_k \} $ such that $ \displaystyle{ v^\prime (s) } { u^\prime (s) } $ takes a minimum value at $ s = s_k $ and
\[
	s_k \to 0 ,
	\quad
	\frac { v^\prime ( s_k ) } { u^\prime ( s_k ) } \to \tan ( \underline \theta^\prime - \phi_i ) ,
	\quad
	\left( \frac { v^\prime } { u^\prime } \right)^\prime ( s_k ) = 0
\]
as $ k \to \infty $.
From the third relation it follows that $ \vecu^{ \prime \prime }( s_k )^\bot \cdot \vecu^\prime ( s_k ) = 0 $.
By using the equation we have
\[
	\| \vecu ( s_k ) \| ( n - 1 ) H( s_k )
	=
	\sum_{ j \in J }
	\frac { n_j \vece ( \phi_j - \phi_i ) \cdot \vecu^\prime ( u_k ) }
	{ \vece ( \phi_j - \phi_i )^\bot \cdot \frac { \vecu ( s_k ) } { \| \vecu ( s_k ) \| } }
	\to A ( \underline \theta^\prime , \bar \theta )
\]
as $ k \to \infty $.
Because of the boundedness of $ H $,
it is clear that
\[
	\| \vecu ( s_k ) \| ( n - 1 ) H( s_k ) \to 0
	.
\]
Hence,
we have $ A( \underline \theta^\prime , \bar \theta ) = 0 $.
Using a sequence of $ s $ where $ \displaystyle{ v^\prime (s) } { u^\prime (s) } $ takes a maximum value,
we get $ A( \bar \theta^\prime , \bar \theta ) = 0  $.
Combining these,
we know $ A ( \underline \theta^\prime , \bar \theta ) = A( \bar \theta^\prime , \bar \theta ) $.
Since $ \displaystyle{ \frac { \partial A } { \partial \alpha } < 0 } $,
we obtain $ \underline \theta^\prime = \bar \theta^\prime $.
\par
Consequently,
in any of these cases,
$ \displaystyle{ \lim_{ s \to + 0 } \frac { v^\prime (s) } { u^\prime (s) } } $ exists.
From Lemma \ref{iflimitexists} it follows that the limit value is $ \tan ( \theta_i - \phi_i ) $.
Finally we know $ \displaystyle{ \lim_{ s \to + 0 } \frac { v(s) } { u(s) } = \tan ( \theta_i - \phi_i ) } $ by Corollary \ref{boundsofliminfsup}.
\qed
\end{proof}
\par
Thus we have completed the proof of Proposition \ref{prop4.1}.
\par
Next we prove the converse of Proposition \ref{prop4.1},
{\em i.e.},
the solvability of \pref{equationx} and \pref{initialcondition-ii}.
We define the function $ q $ as before.
For case (ii),
\[
	\lim_{ v \to 0 } q(v) = \cot ( \theta_i - \phi_i )
	.
\]
Hence,
we introduce new unknown functions $ r $ and $ \rho $ by
\[
	r(v) = q(v) - \cot ( \theta_i - \phi_i ),
	\quad
	\rho (v) = \frac 1 v \int_0^v r ( \eta ) \, d \eta
	.
\]
Then our problem is equivalent to
\be
	\left\{
	\begin{array}{l}
	\displaystyle{
	v \frac { dr } { dv } (v)
	+ ( n - 2 ) r(v)
	=
	- \gamma ( n - 2 ) \rho (v)
	+
	\sum_{ k=1 }^5
	F_k ( r, \rho ) (v)
	}
	,
	\\
	r(0) = \rho (0) = 0
	,
	\end{array}
	\right.
	\label{equationrrho1}
\ee
where
\[
	\gamma = \sharp J - 1
	,
\]
\[
	F_1 ( r, \rho )
	= F_1 (r)
	=
	- ( n - 2 ) r^2
	\left\{ r + 2 \cot ( \theta_i - \phi_i ) \right\}
	\sin^2 ( \theta_i - \phi_i )
	,
\]
\[
	F_2 ( r, \rho )
	=
	- \gamma ( n - 2 ) r \rho \left\{ r + 2 \cot ( \theta_i - \phi_i ) \right\}
	\sin^2 ( \theta_i - \phi_i )
	,
\]
\[
	\begin{array}{l}
	F_3 ( r, \rho )
	=
	\displaystyle{
	-
	r \rho \left\{
	r^2 + 2 r \cot ( \theta_i - \phi_i ) + \mathrm{cosec}^2 ( \theta_i - \phi_i )
	\right\}
	}
	\\
	\qquad
	\displaystyle{
	\times
	\sum_{ j \in J }
	\frac
	{ n_j \sin^2 ( \theta_i - \phi_i ) \cos ( \phi_j - \phi_i ) \sin ( \phi_j - \phi_i ) }
	{ \displaystyle{
	\sin ( \theta_i - \phi_j )
	\left\{
	\sin ( \theta_i - \phi_j ) - \rho \sin ( \phi_j - \phi_i ) \sin ( \theta_i - \phi_i )
	\right\}
	} }
	}
	\end{array}
\]
\[
	\begin{array}{l}
	F_4 ( r, \rho )
	=
	\displaystyle{
	-
	\rho^2
	\left\{
	r^2 + 2 r \cot ( \theta_i - \phi_i ) + \mathrm{cosec}^2 ( \theta_i - \phi_i )
	\right\}
	}
	\\
	\qquad
	\displaystyle{
	\times
	\sum_{ j \in J }
	\frac
	{ n_j \sin^2 ( \theta_i - \phi_i ) \sin^2 ( \phi_j - \phi_i ) \cos ( \theta_i - \phi_j ) }
	{ \displaystyle{
	\sin^2 ( \theta_i - \phi_j )
	\left\{
	\sin ( \theta_i - \phi_j ) - \rho \sin ( \phi_j - \phi_i ) \sin ( \theta_i - \phi_i )
	\right\}
	} }
	}
	,
	\end{array}
\]
and
\[
	F_5 ( r, \rho ) (v)
	=
	F_5 (r) (v)
	=
	( n - 1 )
	\left[ \left\{ \cot ( \theta_i - \phi_i ) + r(v) \right\}^2 + 1 \right]^{ \frac 32 } 
	\tilde H(v) v
	.
\]
Our derivation of \pref{equationrrho1} is elementary but it needs lengthy calculations,
so we present it in the Appendix.
\par
Multiplying both sides of the first equation in \pref{equationrrho1} by $ v^{ n - 3 } $,
and integrating from $ 0 $ to $ v $,
we have
\[
	r(v)
	=
	- \frac { \gamma ( n - 2 ) } { v^{ n - 2 } }
	\int_0^v \rho ( \eta ) \eta^{ n - 3 } d \eta
	+
	\frac 1 { v^{ n - 2 } } \int_0^v \sum_{ k=1 }^5 \psi_k ( r, \rho ) ( \eta ) \, d \eta
	,
\]
where
\[
	\psi_k ( r, \rho ) ( \eta ) = F_k ( r, \rho ) ( \eta ) \eta^{ n - 3 }
	.
\]
\par
Since the function $ \rho $ defined by $ r $,
we can define the map $ \Psi $ by
\[
	\Psi (r) (v)
	=
	- \frac { \gamma ( n - 2 ) } { v^{ n - 2 } }
	\int_0^v \rho ( \eta ) \eta^{ n - 3 } d \eta
	+
	\frac 1 { v^{ n - 2 } } \int_0^v \sum_{ k=1 }^5 \psi_k ( r, \rho ) ( \eta ) \, d \eta
	.
\]
Taking $ M $ large,
and $ V $ small,
we can show this is a contraction map from $ X_{V,M} $ into itself for Types II--III.
This fact can be proved in the same way as \cite{KenNaga}.
Indeed,
the principal part of $ \Psi $ is the map
\be
	\bar \Psi \, : \, r \mapsto 
	- \frac { \gamma ( n -2 ) } { v^{ n - 2 } }
	\int_0^v \rho ( \eta ) \eta^{ n - 3 } d \eta .
	\label{principalpartofPsi}
\ee
Because
\[
	\begin{array}{rl}
	\displaystyle{
	\left|
	- \frac { \gamma ( n - 2 ) } { v^{ n-1 } }
	\int_0^v ( \rho_1 ( \eta ) - \rho_2 ( \eta ) ) \eta^{ n-3 } d \eta
	\right|
	}
	\leqq & \zume
	\displaystyle{
	\frac { \gamma ( n - 2 ) \| \rho_1 - \rho_2 \| } { v^{ n-1 } }
	\int_0^v \eta^{ n-2 } d \eta
	}
	\\
	\leqq & \zume
	\displaystyle{
	\frac { \gamma ( n - 2 ) } { 2 ( n - 1 ) } \| r_1 - r_2 \|
	}
	,
	\end{array}
\]
The map $ \bar \Psi $ is contractive when $ \gamma \leqq 2 $,
which holds for Types II--III.
Since $ \Psi $ is a small perturbation of $ \bar \Psi $,
it is also contractive for Types II--III.
Testing with  the linear functions $ r_i = c_i v $,
we find that the map $ \bar \Psi $ is expansive for Types IV--V.
This suggests that $ \Psi $ may not be contractive for these types,
and therefore we must deal with our problem more carefully.
\par
Since
\[
	\frac { d \rho } { dv }
	=
	- \frac 1 { v^2 } \int_0^v r \, d \eta + \frac rv
	=
	- \frac \rho v + \frac rv
	,
\]
we have
\[
	v \frac d { dv }
	\left(
	\begin{array}{c}
	r \\ \rho
	\end{array}
	\right)
	+
	\left(
	\begin{array}{cc}
	n - 2  & \gamma ( n - 2 ) \\
	-1 & 1
	\end{array}
	\right)
	\left(
	\begin{array}{c}
	r \\ \rho
	\end{array}
	\right)
	=
	\left(
	\begin{array}{c}
	\displaystyle{
	\sum_{ k=1 }^5 F_k ( r, \rho )
	}
	\\
	0
	\end{array}
	\right)
	.
\]
Eigenvalues of the matrix in the left-hand side are
\[
	\lambda_{\pm}
	=
	\frac { n - 1 \pm \sqrt{ n^2 - 2 ( 2 \gamma + 3 ) n + 8 \gamma + 9 } } 2
	.
\]
Since
\[
	( 2 \gamma + 3 )^2 -(8 \gamma + 9 ) = 4 \gamma ( \gamma + 1 ) > 0
	,
\]
We know $ \lambda_+ \ne \lambda_- $.
Therefore,
there exists a non-singular matrix $ P $ such that
\[
	P^{-1} \left( \begin{array}{cc}
	n - 2 & \gamma ( n - 2 ) \\
	-1 & 1
	\end{array} \right) P
	=
	\left( \begin{array}{cc}
	\lambda_+ & 0 \\
	0 & \lambda_-
	\end{array} \right)
	.
\]
Put
$ P = ( p_{ij} ) $,
$ P^{-1} = ( p^{ij} ) $.
These are matrices with constant entries.
Define $ \hat r $ and $ \hat \rho $ by
\[
	\left( \begin{array}{c}
	\hat r \\ \hat \rho
	\end{array} \right)
	=
	P^{-1} \left( \begin{array}{c}
	r \\ \rho
	\end{array} \right)
	.
\]
Then the equation can be rewritten as
\[
	v \frac d { dv }
	\left( \begin{array}{c}
	\hat r \\ \hat \rho
	\end{array} \right)
	+
	\left( \begin{array}{cc}
	\lambda_+ & 0 \\
	0 & \lambda_-
	\end{array} \right)
	\left( \begin{array}{c}
	\hat r \\ \hat \rho
	\end{array} \right)
	=
	\left( \begin{array}{c}
	\displaystyle{
	p^{11} \sum_{ k=1 }^5 \hat F_k ( \hat r , \hat \rho )
	}
	\\
	\displaystyle{
	p^{21} \sum_{ k=1 }^5 \hat F_k ( \hat r , \hat \rho )
	}
	\end{array}
	\right)
	,
	\quad
	\hat r (0) = \hat \rho (0) = 0
	,
\]
where
\[
	\hat F_k ( \hat r , \hat \rho )
	=
	F_k ( p_{11} \hat r + p_{12} \hat \rho , p_{21} \hat r + p_{22} \hat \rho )
	( = F_k ( r , \rho ) )
	.
\]
Hence,
we have
\[
	\hat r (v)
	=
	\frac { p^{11} } { v^{ \lambda_+ } } \int_0^v
	\sum_{ k=1 }^5 \hat F_k ( \hat r , \hat \rho ) ( \eta )
	\eta^{ \lambda_+ - 1 } d \eta ,
	\quad
	\hat \rho (v)
	=
	\frac { p^{21} } { v^{ \lambda_- } } \int_0^v
	\sum_{ k=1 }^5 \hat F_k ( \hat r , \hat \rho ) ( \eta )
	\eta^{ \lambda_- - 1 } d \eta
	.
\]
Considering the Banach spaces $ X_V $ defined earlier in this paper,  let $ X_V \times X_V $ be the Banach space with norm
\[
	\| ( \hat r , \hat \rho ) \|_{ X_V \times X_V }
	=
	\| \hat r \|_{ X_V } + \| \hat \rho \|_{ X_V }
	.
\]
Define the map $ \hat \Psi $ by
\[
	\hat \Psi ( \hat r , \hat \rho ) (v)
	=
	\left(
	\frac { p^{11} } { v^{ \lambda_+ } } \int_0^v
	\sum_{ k=1 }^5 \hat F_k ( \hat r , \hat \rho ) ( \eta )
	\eta^{ \lambda_+ - 1 } d \eta ,
	\frac { p^{21} } { v^{ \lambda_- } } \int_0^v
	\sum_{ k=1 }^5 \hat F_k ( \hat r , \hat \rho ) ( \eta )
	\eta^{ \lambda_- - 1 } d \eta
	\right)
	.
\]
We will show that if $ M $ is large,
and of $ V $ is small,
then $ \hat \Psi $ is a contraction map from $ X_{V,M} \times X_{V,M} $ into itself.
\par
When $ n^2 - 2 ( 2 \gamma + 3 ) n + 8 \gamma + 9 < 0 $,
\[
	\Re \lambda_\pm = \frac { n - 1 } 2 > 0 .
\]
If $ n^2 - 2 ( 2 \gamma + 3 ) n + 8 \gamma + 9 \geqq 0 $,
then
\[
	0 \leqq n^2 - 2 ( 2 \gamma + 3 ) n + 8 \gamma + 9
	= ( n - 1 )^2 - 4 ( \gamma + 1 ) ( n - 2 )
	< ( n - 1 )^2 ,
\]
and hence
\[
	\Re \lambda_\pm = \frac { n - 1 \pm \sqrt{ n^2 - 2 ( 2 \gamma + 3 ) n + 8 \gamma + 9 } } 2 > 0 .
\]
Therefore,
in any of the cases,
the integral $ \displaystyle{ \int_0^v \eta^{ \Re \lambda_\pm + p } d \eta } $ converges for $ p > -1 $,
and
\[
	\int_0^v \eta^{ \Re \lambda_\pm + p } d \eta = \frac { v^{ \Re \lambda_\pm + p + 1 } } { \Re \lambda_\pm + p + 1 } .
\]
\par
Since both $ P $ and $ P^{-1} $ are constant matrices,
it holds that
\[
	\| ( r , \rho ) \| \leqq C \| ( \hat r , \hat \rho ) \| ,
	\quad
	\| ( \hat r , \hat \rho ) \| \leqq C \| ( r , \rho ) \| .
\]
Let $ ( \hat r , \hat \rho ) \in X_{ V,M } \times X_{ V,M } $,
and then we have
\[
	\begin{array}{rl}
	| \hat F_1 ( \hat r , \hat \rho ) ( \eta )|
	= & \zume
	| F_1 ( r , \rho ) ( \eta ) |
	\leqq
	C \| r \|^2 \eta^2 \left( \| r \| \eta + 1 \right)
	\\
	\leqq &\zume
	C \| ( \hat r , \hat \rho ) \|^2 \eta^2 \left( \| ( \hat r , \hat \rho ) \| \eta + 1 \right)
	\leqq
	C M^2 \eta^2 \left( M \eta + 1 \right) .
	\end{array}
\]
Therefore,
we get
\[
	\begin{array}{rl}
	\displaystyle{
	\left|
	\frac 1 { v^{ \lambda_+ + 1 } } \int_0^v
	\hat F_1 ( \hat r , \hat \rho ) ( \eta ) \eta^{ \lambda_+ - 1 } d \eta
	\right|
	}
	\leqq & \zume
	\displaystyle{
	\frac C { v^{ \Re \lambda_+ + 1 } } \int_0^v
	M^2 \eta^2 \left( M \eta + 1 \right)
	\eta^{ \Re \lambda_+ - 1 } d \eta
	}
	\\
	\leqq & \zume
	\displaystyle{
	\frac C { v^{ \Re \lambda_+ + 1 } }
	\left( M^3 v^{ \Re \lambda_+ 3 } + M^2 v^{ \Re \lambda_+ 2 } \right)
	}
	\\
	\leqq & \zume
	\displaystyle{
	C \left( M^3 V^2 + M^2 V \right)
	}
	.
	\end{array}
\]
We can estimate $ \displaystyle{ \frac 1 { v^{ \lambda_+ + 1 } } \int_0^v \hat F_k ( \hat r , \hat \rho ) ( \eta ) \eta^{ \Re \lambda_+ - 1 } d \eta } $ for $ k = 2 $,
$ 3 $,
$ 4 $,
$ 5 $ in a similar manner,
and we  obtain
\[
	\left|
	\frac { p^{11} } { v^{ \Re \lambda_+ + 1 } }
	\int_0^v \sum_{ k=1 }^5 \hat F_k ( \hat r , \hat \rho ) ( \eta ) \eta^{ \lambda_+ - 1 } d \eta
	\right|
	\leqq
	C \left( M^3 V^2 + M^2 V + M^4 V^3 + M^3 V^3 + 1 \right) .
\]
We can also derive
\[
	\left|
	\frac { p^{21} } { v^{ \Re \lambda_- + 1 } }
	\int_0^v \sum_{ k=1 }^5 \hat F_k ( \hat r , \hat \rho ) ( \eta ) \eta^{ \lambda_- - 1 } d \eta
	\right|
	\leqq
	C \left( M^3 V^2 + M^2 V + M^4 V^3 + M^3 V^3 + 1 \right)
\]
in the same manner.
From these it follows that
\[
	\| \hat \Psi ( \hat r , \hat \rho ) \|
	\leqq
	C \left( M^3 V^2 + M^2 V + M^4 V^3 + M^3 V^3 + 1 \right) .
\]
Consequently,
$ \hat \Psi $ is a map from $ X_{V,M} \times X_{V,M} $ into itself provided $ M $ is large and $ V $ is small.
\par
Using
\[
	\| ( r_1 , \rho_1 ) - ( r_2 , \rho_2 ) \|
	\leqq
	C \| ( \hat r_1 , \hat \rho_1 ) - ( \hat r_2 , \hat \rho_2 ) \| ,
\]
\[
	\| ( \hat r_1 , \hat \rho_1 ) - ( \hat r_2 , \hat \rho_2 ) \|
	\leqq
	C \| ( r_1 , \rho_1 ) - ( r_2 , \rho_2 ) \|
	,
\]
we can get
\[
	\begin{array}{l}
	\| \hat \Psi ( \hat r_1 , \hat \rho_1 ) - \hat \Psi ( \hat r_2 , \rho_2 ) \|
	\\
	\quad
	\leqq
	C \left( M^2 V^2 + MV + M^4 V^4 + M^2 V^3 + V \right)
	\| ( \hat r_1 , \hat \rho_1 ) - ( \hat r_2 , \hat \rho_2 ) \|
	.
	\end{array}
\]
Indeed,
from
\[
	\begin{array}{l}
	\left| \hat F_1 ( \hat r_1 , \hat \rho_1 ) - \hat F_1 ( \hat r_2 , \hat \rho_2 ) \right|
	=
	\left| F_1 ( r_1 ) - F_1 ( r_2 ) \right|
	\\
	\quad
	=
	\displaystyle{
	\left|
	- ( n - 2 ) ( r_1 - r_2 )
	\left\{ r_1^2 + r_1 r_2 + r_2
	+ 2 ( r_1 + r_2 ) \cot ( \theta_i - \phi_i )
	\right\}
	\sin^2 ( \theta_i - \phi_i )
	\right|
	}
	\\
	\quad
	\leqq
	\displaystyle{
	C \| r_1 - r_2 \| \eta \left( M^2 \eta^2 + M \eta \right)
	}
	\\
	\quad
	\leqq
	\displaystyle{
	C \| \hat r_1 - \hat r_2 \| \eta \left( M^2 \eta^2 + M \eta \right)
	}
	\end{array}
\]
it follows that
\[
	\begin{array}{l}
	\displaystyle{
	\left|
	\frac { p^{11} } { v^{ \lambda_+ +1 } }
	\int_0^v
	\left(
	\hat F_1 ( r_1 , \rho_1 ) ( \eta ) - \hat F_1 ( r_2 , \rho_2 ) ( \eta )
	\right)
	\eta^{ \lambda_+ -1 } d \eta
	\right|
	}
	\\
	\quad
	\leqq
	\displaystyle{
	\frac { C \| \hat r_1 - \hat r_2 \| } { v^{ \Re \lambda_+ +1 } }
	\int_0^v \left( M^2 \eta^3 + M \eta^2 \right) \eta^{ \Re \lambda_+ -1 } d \eta
	}
	\\
	\quad
	\leqq
	\displaystyle{
	\frac { C \| \hat r_1 - \hat r_2 \| } { v^{ \Re \lambda_+ +1 } }
	\left( M^2 v^{ \Re \lambda_+ +3 } + M v^{ \Re \lambda_+ +2 } \right)
	}
	\\
	\quad
	\leqq
	\displaystyle{
	C \left( M^2 V^2 + MV \right) \| \hat r_1 - \hat r_2 \|
	}
	.
	\end{array}
\]
Similarly we have
\[
	\begin{array}{l}
	\displaystyle{
	\left|
	\frac { p^{11} } { v^{ \lambda_+ +1 } }
	\int_0^v
	\left(
	\hat F_2 ( r_1 , \rho_1 ) ( \eta ) - \hat F_2 ( r_2 , \rho_2 ) ( \eta )
	\right)
	\eta^{ \lambda_+ -1 } d \eta
	\right|
	}
	\\
	\quad
	\leqq
	\displaystyle{
	C \left( M^2 V^2 + MV \right) \| \hat r_1 - \hat r_2 \|
	}
	,
	\end{array}
\]
\[
	\begin{array}{l}
	\displaystyle{
	\left|
	\frac { p^{11} } { v^{ \lambda_+ +1 } }
	\int_0^v
	\left(
	\hat F_k ( r_1 , \rho_1 ) ( \eta ) - \hat F_k ( r_2 , \rho_2 ) ( \eta )
	\right)
	\eta^{ \lambda_+ -1 } d \eta
	\right|
	}
	\\
	\quad
	\leqq
	\displaystyle{
	C \left( M^4 V^4 + MV \right)
	\left( \| \hat r_1 - \hat r_2 \| + \| \hat \rho_1 - \hat \rho_2 \| \right)
	\quad
	\mbox{for} \quad  k = 3 , \, 4
	}
	,
	\end{array}
\]
\[
	\begin{array}{l}
	\displaystyle{
	\left|
	\frac { p^{11} } { v^{ \lambda_+ +1 } }
	\int_0^v
	\left(
	\hat F_5 ( r_1 , \rho_1 ) ( \eta ) - \hat F_5 ( r_2 , \rho_2 ) ( \eta )
	\right)
	\eta^{ \lambda_+ -1 } d \eta
	\right|
	}
	\\
	\quad
	\leqq
	\displaystyle{
	C \left( M^2 V^3 + V \right) \| \hat r_1 - \hat r_2 \|
	}
	.
	\end{array}
\]
Therefore,
it holds that
\[
	\begin{array}{l}
	\displaystyle{
	\left\|
	\frac { p^{11} } { v^{ \lambda_+ } }
	\int_0^v \sum_{ k=1 }^5
	\left( \hat F_k ( \hat r_1 , \hat \rho_1 ) ( \eta ) - \hat F_k ( \hat r_2 , \hat \rho_2 ) ( \eta ) \right)
	\eta^{ \lambda_+ - 1 } d \eta
	\right\|_{ X_V }
	}
	\\
	\quad
	\leqq
	\displaystyle{
	C \left( M^2 V^2 + MV + M^4 V^4 + M^2 V^3 + V \right)
	\| ( \hat r_1 , \hat \rho_1 ) - ( \hat r_2 , \hat \rho_2 ) \|
	} .
	\end{array}
\]
We can also derive 
\[
	\begin{array}{l}
	\displaystyle{
	\left\|
	\frac { p^{21} } { v^{ \lambda_- } }
	\int_0^v \sum_{ k=1 }^5
	\left( \hat F_k ( \hat r_1 , \hat \rho_1 ) ( \eta ) - \hat F_k ( \hat r_2 , \hat \rho_2 ) ( \eta ) \right)
	\eta^{ \lambda_- - 1 } d \eta
	\right\|_{ X_V }
	}
	\\
	\quad
	\leqq
	\displaystyle{
	C \left( M^2 V^2 + MV + M^4 V^4 + M^2 V^3 + V \right)
	\| ( \hat r_1 , \hat \rho_1 ) - ( \hat r_2 , \hat \rho_2 ) \|
	}
	\end{array}
\]
in the same manner.
\par
Consequently,
the map $ \hat \Psi $ is contraction if $ V $ is sufficiently small.
The unique fixed point is a local solution to \pref{equationrrho1}.
\par
Because eigenvalues $ \lambda_\pm $ and matrix $ P $ are not necessarily real,
our solution might not be real-valued.
Therefore,
we must conform that our $ r $ and $ \rho $ are real-valued.
Putting $ r_{{}_I} = \Im r $ and $ \rho_{{}_I} = \Im \rho $,
we want to show $ r_{{}_I} = \rho_{{}_I} \equiv 0 $.
It is easy to see that $ r_{{}_I} $ satisfies
\[
	v \frac d { dv } r_{{}_I }
	+ ( n - 2 ) r_{{}_I }
	+ \gamma ( n - 2 ) \rho_{{}_I }
	=
	\sum_{ k=1 }^5 \Im F_k ( r , \rho ) .
\]
Multiplying both sides by $ 2 r_{{}_I } $,
we have
\[
	\frac d { dv } \left[ v \left\{ r_{{}_I }^2 + \gamma ( n - 2 ) \rho_{{}_I}^2 \right\} \right]
	+ ( 2n - 5 ) r_{{}_I}^2 + \gamma ( n - 2 ) \rho_{{}_I}^2
	=
	2 r_{{}_I } \sum_{ k=1 }^5 \Im \hat F_k ( \hat r , \hat \rho ) .
\]
We shall use the following lemma first and give it's proof  inmediately after.
\begin{lem}
Assume that $ V $ is sufficiently small.
There exists a positive constant $ C $ depending on $ M $ and $ V $ such that
\[
	| \Im \hat F_k ( \hat r , \hat \rho ) |
	\leqq
	C v \left( | r_{{}_I } | + | \rho_{{}_I } | \right)
\]
\label{ImFk}
\end{lem}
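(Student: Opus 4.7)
The plan is to exploit the fact that each $F_k(r,\rho)$ is a real-analytic function of $(r,\rho)$ in a neighborhood of the origin in $\mathbb{C}^2$, in the sense that its power-series expansion has real coefficients. This is immediate for $F_1$, $F_2$, $F_5$, which are polynomials in $r,\rho$ with real trigonometric constants, or (in the case of $F_5$) a composition with the real-analytic function $z\mapsto (z^2+1)^{3/2}$. For $F_3$ and $F_4$, the same property follows once one verifies that the denominator
\[
\sin(\theta_i-\phi_j)\{\sin(\theta_i-\phi_j)-\rho\sin(\phi_j-\phi_i)\sin(\theta_i-\phi_i)\}
\]
stays bounded away from zero in $\mathbb{C}$ when $|\rho|\le CMV$; this follows from the triangle inequality, since the real part is within $O(MV)$ of $\sin^2(\theta_i-\phi_j)\ne 0$ and $V$ is small.

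Granted this, $\Im F_k(r,\rho)=0$ whenever $r_{{}_I}=\rho_{{}_I}=0$, so I Taylor-expand in the imaginary directions, obtaining
\[
\Im F_k(r_R+ir_{{}_I},\rho_R+i\rho_{{}_I})=r_{{}_I}(\partial_r F_k)(r_R,\rho_R)+\rho_{{}_I}(\partial_\rho F_k)(r_R,\rho_R)+R_k,
\]
where $R_k$ collects terms quadratic or higher in $(r_{{}_I},\rho_{{}_I})$. The remaining task is to extract the factor $v$ from the structure of each $F_k$. The polynomial $F_1$ vanishes to second order at $r=0$, so $(\partial_r F_1)(r_R,0)=O(|r_R|)$; the expressions $F_2$, $F_3$, $F_4$ carry an explicit factor $r\rho$ or $\rho^2$, so both of their partials at the origin vanish and are $O(|r_R|+|\rho_R|)$; and $F_5$ has the factor $v$ written out. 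Together with the a priori estimate $|r_R(\eta)|,|\rho_R(\eta)|\le CM\eta$ coming from $(\hat r,\hat\rho)\in X_{V,M}\times X_{V,M}$, this controls the linear term by $C\eta(|r_{{}_I}|+|\rho_{{}_I}|)$ in every case.

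For the remainder $R_k$ I would observe that it contains at least two factors drawn from $\{r_{{}_I},\rho_{{}_I}\}$: one is absorbed into $|r_{{}_I}|+|\rho_{{}_I}|$, a second is bounded by $CMV$, and the vanishing order of $F_k$ at the origin (or the explicit $v$ factor in $F_5$) supplies the required power of $\eta$. This produces $|R_k|\le C\eta(|r_{{}_I}|+|\rho_{{}_I}|)$, and summation over $k$ then yields the claim.

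The main technical obstacle I expect is the bookkeeping for $F_3$ and $F_4$, whose denominators depend on $\rho$ and therefore contribute their own imaginary parts when $\rho$ is complex. One has to keep track of terms of the form $\Im\{1/(\sin(\theta_i-\phi_j)-\rho\sin(\phi_j-\phi_i)\sin(\theta_i-\phi_i))\}$, which are of order $|\rho_{{}_I}|$ divided by a quantity bounded away from zero by the first paragraph; because this is multiplied by the already-small numerator factor $r\rho$ (respectively $\rho^2$), it still fits into the framework above. Apart from this denominator analysis the remaining work is a routine but lengthy expansion of real and imaginary parts, which I would carry out term by term in a manner parallel to the Lipschitz estimates already given for $\hat \Psi$ earlier in this section.
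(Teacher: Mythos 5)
Your proof is essentially correct, but it takes a more unified route than the paper. The paper treats $F_1,\dots,F_4$ by brute-force expansion of imaginary parts, using identities such as $|\Im(r^3)|\le |r_{{}_R}^2 r_{{}_I}|+|r_{{}_I}^3|$ and, for $F_3,F_4$, the product rule $\Im(ab)=\Re(a)\Im(b)+\Im(a)\Re(b)$ together with the observation that the denominators stay bounded away from zero; the real-coefficient power-series argument (Lemma \ref{Imf}) is invoked \emph{only} for $F_5$, where the non-polynomial function $z\mapsto (z^2+1)^{3/2}$ prevents a direct expansion. You instead apply the Taylor/power-series idea uniformly to all five $F_k$, splitting off the linear part in $(r_{{}_I},\rho_{{}_I})$ and controlling each piece by the vanishing order of $F_k$ at the origin (or the explicit $v$-factor in $F_5$). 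This is a legitimate and arguably cleaner organization: it isolates the structural reason the lemma is true --- each $F_k$ is a real-analytic function of $(r,\rho)$ whose gradient is $O(|r|+|\rho|)$, plus an extra $v$ in $F_5$ --- rather than repeating a similar computation five times. What it costs is that, for the non-polynomial $F_5$ and the rational $F_3,F_4$, you must still verify that the higher-order Taylor remainder is uniformly controlled on the relevant complex disc; this is exactly the content of Lemma \ref{Imf}, which your argument implicitly re-derives. Two small points worth tightening if you wrote this up: the partials should be evaluated at $(r_{{}_R},\rho_{{}_R})$, not $(r_{{}_R},0)$ (immaterial for $F_1$, which depends only on $r$, but not for $F_2,\dots,F_4$); and for the quadratic-or-higher remainder you do not in fact need the vanishing order of $F_k$ --- having two factors from $\{r_{{}_I},\rho_{{}_I}\}$ already supplies the needed factor of $\eta$, since $|r_{{}_I}|,|\rho_{{}_I}|\le M\eta$.
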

\par
Using the lemma,
$ n - 2 > 0 $,
and $ \gamma > 0 $,
we have
\[
	v \left\{ r_{{}_I }^2 + \gamma ( n - 2 ) \rho_{{}_I}^2 \right\}
	\leqq
	C \int_0^v \eta \left\{ r_{{}_I }^2 + \gamma ( n - 2 ) \rho_{{}_I}^2 \right\} d \eta .
\]
From Gronwall's lemma it follows that
\[
	r_{{}_I } \equiv 0 ,
	\quad
	\rho_{{}_I} \equiv 0 .
\]
\par\noindent
{\em Proof of Lemme \ref{ImFk} for $ \Im \hat F_1 $,
$ \cdots $,
$ \Im \hat F_4 $}.
Put $ \Re r = r_{{}_R} $,
$ \Re \rho = \rho_{{}_R} $.
Since $ r_{{}_R } $,
$ r_{{}_I } $,
$ \rho_{{}_R} $,
$ \rho_{{}_I} \in X_{V,M} $,
these moduli are dominated by $ C v $.
Therefore,
we have
\[
	\begin{array}{rl}
	| \Im \hat F_1 ( \hat r , \hat \rho ) |
	= & \zume
	| \Im F_1 (r) |
	\leqq
	C \left\{
	\left| \Im \left( r^3 \right) \right|
	+
	\left| \Im \left( r^2 \right) \right|
	\right\}
	\\
	\leqq & \zume
	C \left(
	\left| r_{{}_R}^2 r_{{}_I} \right|
	+
	\left| r_{{}_I}^3 \right|
	+
	| r_{{}_R} r_{{}_I} |
	\right)
	\\
	\leqq & \zume
	C \left( v^2 + v \right) | r_{{}_I} |
	\\
	\leqq & \zume
	C v | r_{{}_I} |
	,
	\end{array}
\]
\[
	\begin{array}{rl}
	| \Im \hat F_2 ( \hat r , \hat \rho ) |
	= & \zume
	| \Im F_2 ( r , \rho ) |
	\leqq
	C \left\{
	\left| \Im \left( r^2 \rho \right) \right|
	+
	\left| \Im \left( r \rho \right) \right|
	\right\}
	\\
	\leqq & \zume
	C \left(
	r^2 | r_{{}_I} |
	+
	| r_{{}_R } \rho_{{}_R} | | r_{{}_I} |
	+
	| \rho_{{}_R} | | r_{{}_I} |
	+
	| r_{{}_R} | | \rho_{{}_I} |
	\right)
	\\
	\leqq & \zume
	C \left( v^2 + v \right) \left( | r_{{}_I} | + | \rho_{{}_I } | \right)
	\\
	\leqq & \zume
	C v \left( | r_{{}_I} | + | \rho_{{}_I } | \right)
	.
	\end{array}
\]
$ \Im \hat F_3 $ is estimated as follows:
\[
	\begin{array}{rl}
	| \Im \hat F_3 ( \hat r , \hat \rho ) |
	= & \zume
	| \Im F_3 ( r , \rho ) |
	\\
	\leqq & \zume
	\displaystyle{
	\left| \Im \left[
	r \rho \left\{
	r^2 + 2 r \cot ( \theta_i - \phi_i ) + \mathrm{cosec}^2 ( \theta_i - \phi_i )
	\right\}
	\right]
	\right|
	}
	\\
	& \qquad
	\displaystyle{
	\times
	\left|
	\Re \left[
	\sum_{ j \in J }
	\frac
	{ n_j \sin^2 ( \theta_i - \phi_i ) \cos ( \phi_j - \phi_i ) \sin ( \phi_j - \phi_i ) }
	{ \displaystyle{
	\sin ( \theta_i - \phi_j )
	\left\{
	\sin ( \theta_i - \phi_j ) - \rho \sin ( \phi_j - \phi_i ) \sin ( \theta_i - \phi_i )
	\right\}
	} }
	\right]
	\right|
	}
	\\
	& \quad
	\displaystyle{
	+ \,
	\left| \Re \left[
	r \rho \left\{
	r^2 + 2 r \cot ( \theta_i - \phi_i ) + \mathrm{cosec}^2 ( \theta_i - \phi_i )
	\right\}
	\right]
	\right|
	}
	\\
	& \qquad
	\displaystyle{
	\times
	\left|
	\Im \left[
	\sum_{ j \in J }
	\frac
	{ n_j \sin^2 ( \theta_i - \phi_i ) \cos ( \phi_j - \phi_i ) \sin ( \phi_j - \phi_i ) }
	{ \displaystyle{
	\sin ( \theta_i - \phi_j )
	\left\{
	\sin ( \theta_i - \phi_j ) - \rho \sin ( \phi_j - \phi_i ) \sin ( \theta_i - \phi_i )
	\right\}
	} }
	\right]
	\right|
	}
	.
	\end{array}
\]
We can estimate each term as follows:
\[
	\begin{array}{l}
	\displaystyle{
	\left|
	\Im
	\left[
	r \rho \left\{
	r^2 + 2 r \cot ( \theta_i - \phi_i ) + \mathrm{cosec}^2 ( \theta_i - \phi_i )
	\right\}
	\right]
	\right|
	}
	\\
	\quad
	\leqq
	\displaystyle{
	C \left(
	\left| \Im \left( r^3 \rho \right) \right|
	+
	\left| \Im \left( r^2 \rho \right) \right|
	+
	\left| \Im ( r \rho ) \right|
	\right)
	}
	\\
	\quad
	\leqq
	\displaystyle{
	C v \left( | r_{{}_I } | + | \rho_{{}_I } | \right)
	}
	,
	\end{array}
\]
\[
	\left|
	\Re \left[
	\sum_{ j \in J }
	\frac
	{ n_j \sin^2 ( \theta_i - \phi_i ) \cos ( \phi_j - \phi_i ) \sin ( \phi_j - \phi_i ) }
	{ \displaystyle{
	\sin ( \theta_i - \phi_j )
	\left\{
	\sin ( \theta_i - \phi_j ) - \rho \sin ( \phi_j - \phi_i ) \sin ( \theta_i - \phi_i )
	\right\}
	} }
	\right]
	\right|
	\leqq C ,
\]
\[
	\left|
	\Im
	\left[
	r \rho \left\{
	r^2 + 2 r \cot ( \theta_i - \phi_i ) + \mathrm{cosec}^2 ( \theta_i - \phi_i )
	\right\}
	\right]
	\right|
	\leqq C v^2 \leqq C v ,
\]
\[
	\begin{array}{l}
	\displaystyle{
	\left|
	\Im \left[
	\sum_{ j \in J }
	\frac
	{ n_j \sin^2 ( \theta_i - \phi_i ) \cos ( \phi_j - \phi_i ) \sin ( \phi_j - \phi_i ) }
	{ \displaystyle{
	\sin ( \theta_i - \phi_j )
	\left\{
	\sin ( \theta_i - \phi_j ) - \rho \sin ( \phi_j - \phi_i ) \sin ( \theta_i - \phi_i )
	\right\}
	} }
	\right]
	\right|
	}
	\\
	\quad
	\leqq
	\displaystyle{
	C \sum_{ j \in J }
	\frac
	{ \left|
	\Im \left\{
	\sin ( \theta_i - \phi_j ) - \rho \sin ( \phi_j - \phi_i ) \sin ( \theta_i - \phi_i )
	\right\}
	\right|
	}
	{ \displaystyle{
	\left|
	\sin ( \theta_i - \phi_j )
	\left\{
	\sin ( \theta_i - \phi_j ) - \rho \sin ( \phi_j - \phi_i ) \sin ( \theta_i - \phi_i )
	\right\}
	\right|^2
	} }
	}
	\\
	\quad
	\leqq
	C | \rho_{{}_I } |
	.
	\end{array}
\]
Similarly it holds for $ \Im \hat F_4 $ that
\[
	\begin{array}{rl}
	| \Im \hat F_4 ( \hat r , \hat \rho ) |
	= & \zume
	| \Im F_4 ( r , \rho ) |
	\\
	\leqq & \zume
	\displaystyle{
	\left| \Im \left[
	\rho^2
	\left\{
	r^2 + 2 r \cot ( \theta_i - \phi_i ) + \mathrm{cosec}^2 ( \theta_i - \phi_i )
	\right\}
	\right]
	\right|
	}
	\\
	& \quad
	\displaystyle{
	\times
	\left|
	\Re \left[
	\sum_{ j \in J }
	\frac
	{ n_j \sin^2 ( \theta_i - \phi_i ) \sin^2 ( \phi_j - \phi_i ) \cos ( \theta_i - \phi_j ) }
	{ \displaystyle{
	\sin^2 ( \theta_i - \phi_j )
	\left\{
	\sin ( \theta_i - \phi_j ) - \rho \sin ( \phi_j - \phi_i ) \sin ( \theta_i - \phi_i )
	\right\}
	} }
	\right]
	\right|
	}
	\\
	& \quad
	\displaystyle{
	+ \,
	\left| \Re \left[
	\rho^2
	\left\{
	r^2 + 2 r \cot ( \theta_i - \phi_i ) + \mathrm{cosec}^2 ( \theta_i - \phi_i )
	\right\}
	\right]
	\right|
	}
	\\
	& \qquad
	\displaystyle{
	\times
	\left|
	\Im \left[
	\sum_{ j \in J }
	\frac
	{ n_j \sin^2 ( \theta_i - \phi_i ) \sin^2 ( \phi_j - \phi_i ) \cos ( \theta_i - \phi_j ) }
	{ \displaystyle{
	\sin^2 ( \theta_i - \phi_j )
	\left\{
	\sin ( \theta_i - \phi_j ) - \rho \sin ( \phi_j - \phi_i ) \sin ( \theta_i - \phi_i )
	\right\}
	} }
	\right]
	\right|
	}
	\\
	\leqq &\zume
	\displaystyle{
	C \left(
	\left| \Im \left( \rho^2 r^2 \right) \right|
	+
	\left| \Im \left( r \rho^2 \right) \right|
	+
	\left| \Im \left( \rho^2 \right) \right|
	\right)
	+
	C v^2 | \rho_{{}_I } |
	}
	\\
	\leqq & \zume
	\displaystyle{
	C v \left( | r_{{}_I } | + | \rho_{{}_I } | \right)
	}
	.
	\end{array}
\]
\qed
\par
We need the following lemma for the estimate of $ \Im \hat F_5 $.
\begin{lem}
Let $ B_R $ be the closed disc in $ \mathbb{C} $ with center $ O $ and with radius $ R $.
Assume that the function $ f $ is analytic on $ B_R $,
and that $ f|_{ B_R \cap \mathbb{R} } $ is real-valued.
Then there exists $ C > 0 $ such that
\[
	| \Im f(r) | \leqq C | r_{{}_I } |
\]
holds for $ r \in B_{ R/2 } $.
\label{Imf}
\end{lem}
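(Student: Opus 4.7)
The plan is to combine the Schwarz reflection principle with Cauchy's estimate for the derivative. First I would note that because $f$ is analytic on $B_R$ and takes real values on the real diameter $B_R\cap\mathbb{R}$, the Schwarz reflection principle applies and gives the identity $f(\bar z)=\overline{f(z)}$ throughout $B_R$. Writing $r=r_R+ir_I$ with $r_R,r_I\in\mathbb{R}$, this produces
\[
\Im f(r)=\frac{f(r)-\overline{f(r)}}{2i}=\frac{f(r)-f(\bar r)}{2i}.
\]

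Next I would control the numerator by integrating $f'$ along the vertical segment from $\bar r$ to $r$, which has length $2|r_I|$ and, for $r\in B_{R/2}$, stays inside $B_{R/2}$. This yields the bound
\[
|\Im f(r)|\leq \frac{1}{2}\cdot 2|r_I|\cdot\sup_{z\in B_{R/2}}|f'(z)|.
\]
It remains to bound $|f'|$ uniformly on $B_{R/2}$. Since $f$ is analytic on the compact disc $B_R$, $|f|$ is bounded there by some constant $M$, and Cauchy's integral formula (applied on circles of radius $R/2$ around each $z\in B_{R/2}$) produces $|f'(z)|\leq 2M/R$. Taking $C=2M/R$ then completes the estimate.

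I do not anticipate any serious obstacle here: the argument is a two-line application of classical complex analysis. The only small points to verify are that Schwarz reflection genuinely applies (using the real-valuedness hypothesis on $B_R\cap\mathbb{R}$) and that the segment $[\bar r,r]$ lies in $B_{R/2}$ whenever $r\in B_{R/2}$, both of which are immediate. The constant $C$ is structural and depends only on $R$ and $\sup_{B_R}|f|$, exactly as required for the subsequent application to bounding $|\Im\hat F_5|$ in the proof of Lemma~\ref{ImFk}.
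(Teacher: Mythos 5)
Your proof is correct, but it follows a genuinely different route from the paper's.
The paper expands $f$ in its Taylor series $f(r)=\sum_k c_k r^k$, observes that the hypothesis forces all $c_k$ to be real, and then bounds each $|\Im(r^k)|$ termwise via the binomial theorem by (roughly) $2^k|r|^{k-1}|r_I|$, summing the resulting geometric-type series using $\sum|c_k|R^k<\infty$ and $2|r|\leqq R$.
Your version encodes the same structural fact --- that $f$ commutes with complex conjugation --- via Schwarz reflection $f(\bar z)=\overline{f(z)}$, and then converts $\Im f(r)=\tfrac{1}{2i}\bigl(f(r)-f(\bar r)\bigr)$ into an integral of $f'$ along the vertical segment $[\bar r,r]\subset B_{R/2}$, finishing with Cauchy's estimate $|f'|\leqq 2M/R$ on $B_{R/2}$.
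Your argument is a bit slicker: it avoids any series manipulation (and sidesteps a small typographical infelicity in the paper's chain of inequalities for $|\Im(r^k)|$), and it makes the final constant $C=2M/R$ entirely explicit in terms of $R$ and $M=\sup_{B_R}|f|$.
The paper's version, by contrast, needs only the absolute convergence of the Taylor series on $\overline{B_R}$ rather than a bound on $f'$ over a slightly enlarged disc; in practice the two sets of hypotheses coincide here since $f$ is assumed analytic on the closed disc $B_R$.
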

\begin{proof}
We have
\[
	f(r) = \sum_{ k=0 }^\infty c_k r^k
	\quad \mbox{for} \quad r \in B_R ,
	\quad \mbox{and} \quad
	\sum_{ k=0 } | c_k | R^k < \infty .
\]
Since $ f|_{ B_R \cap \mathbb{R} } $ is real-valued,
so are $ c_k $'s.
Therefore,
we get
\[
	\Im f(r) = \sum_{ k=1 }^\infty c_k \Im \left( r^k \right) .
\]
Furthermore, we have
\[
	\left| \Im \left( r^k \right) \right|
	=
	\left| \Im \left( r_{{}_R} + \sqrt{-1} r_{{}_I } \right)^k \right|
	\leqq
	\sum_{ \ell=1 }^k {}_k C_\ell \left| r_{{}_R}^{ k - \ell } r_{{}_I}^\ell \right|
	\leqq
	\sum_{ \ell=0 }^k {}_k C_\ell | r |^{ k - 1 } | r_{{}_I} |
	=
	\frac { ( 2| r | )^k } R | r_{{}_I } | .
\]
Consequently,
\[
	\left|
	\sum_{k=1}^\infty c_k \Im \left( r^k \right)
	\right|
	\leqq
	\frac { | r_{{}_I } | } R \sum_{k=1}^\infty c_k ( 2 | r | )^k ,
\]
and the right-hand side converges for $ r \in B_{R/2} $.
\qed
\end{proof}
{\em Proof of Lemme \ref{ImFk} for $ \Im \hat F_5 $}.
\par\noindent
\par
The function $ f(r) = \left[ \left\{ \cot ( \theta_i - \phi_i ) + r \right\}^2 + 1 \right]^{ \frac 32 } $ satisfies the assumption of Lemma \ref{Imf}.
Taking $ V $ small,
we may assume $ \displaystyle{ | r_{{}_I} (v) | \leqq \frac R2 } $.
Hence,
we have
\[
	\left| \Im
	\left[ \left\{ \cot ( \theta_i - \phi_i ) + r(v) \right\}^2 + 1 \right]^{ \frac 32 }
	\right|
	\leqq C | r_{{}_I } | .
\]
Consequently,
it holds that
\[
	| \Im \hat F_5 ( \hat r , \hat \rho ) |
	=
	| \Im F_5 ( r ) |
	\leqq
	C v
	\left| \Im
	\left[ \left\{ \cot ( \theta_i - \phi_i ) + r(v) \right\}^2 + 1 \right]^{ \frac 32 }
	\right|
	\leqq
	C v | r_{{}_I } | .
\]
\qed
\par
We have now completed the proof of existence of a real-valued local solution to \pref{equationrrho1},
which also proves the following proposition.
\begin{prop}
Let $ H $ be continuous.
Then there exists a unique local solution $ \vecx $ to {\rm \pref{equationx}} and {\rm \pref{initialcondition-ii}}.
\label{prop4.2}
\end{prop}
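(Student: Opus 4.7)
The plan is to reduce the initial value problem to a fixed point problem for a pair of scalar functions and then to overcome a spectral obstruction that appears for Types IV--V by diagonalizing the linear part of the resulting system. First I would apply the rotation $\psi = -\phi_i$ from Section 2, reducing to $\vecu = \, ^t\! (u,v)$ with $\vecu (0) = \veco$ and $\vecu^\prime (0) = \vece ( \theta_i - \phi_i )$, which lies in the open sector $S_0$. Since $\theta_i - \phi_i \in ( 0 , \phi_{i+1} - \phi_i ) \subseteq ( 0 , \frac\pi 2 ]$, both $u^\prime (0)$ and $v^\prime (0)$ are positive, so near $s = 0$ one may invert $s \mapsto v$ and study $q(v) = du/dv = u^\prime / v^\prime$, with the velocity condition becoming $\lim_{v\to 0} q(v) = \cot ( \theta_i - \phi_i ) \ne 0$.

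Because this limit is nonzero, I would introduce the centered variable $r(v) = q(v) - \cot ( \theta_i - \phi_i )$ and the averaged variable $\rho (v) = v^{-1} \int_0^v r ( \eta ) \, d\eta$. A direct but lengthy calculation (carried out in the Appendix) rewrites the ODE for $q$ as the first order system \pref{equationrrho1} with $r(0) = \rho (0) = 0$. Integrating the $r$-equation against the weight $v^{n-3}$ produces an integral equation for $r$, and the natural map $\Psi$ built from it is the candidate for a contraction argument in the Banach space $X_V$ with norm $\| f \| = \sup_{v \in ( 0 , V ]} | f(v)/v |$.

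The main obstacle is that the principal linear part of $\Psi$, namely the map \pref{principalpartofPsi}, has operator norm $\gamma ( n - 2 )/(2 ( n - 1 ))$, which exceeds $1$ for Types IV--V where $\gamma = \sharp J - 1 \in \{ 3 , 5 \}$. To circumvent this I would diagonalize the $2 \times 2$ linear coefficient matrix with rows $( n-2 , \gamma ( n-2 ) )$ and $( -1 , 1 )$; its eigenvalues $\lambda_\pm = \frac 12 ( n - 1 \pm \sqrt{ n^2 - 2 ( 2 \gamma + 3 ) n + 8 \gamma + 9 })$ are distinct and have positive real part. Writing $( \hat r , \hat \rho ) = P^{-1} ( r , \rho )$ decouples the linear part, and the two integral equations obtained with weights $\eta^{ \lambda_\pm - 1 }$ give a map $\hat \Psi$ on $X_{V,M} \times X_{V,M}$ whose nonlinear terms are controlled by powers of $M$ and $V$. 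Since $\Re \lambda_\pm > 0$ the weighted integrals converge, and standard estimates then show $\hat \Psi$ is a self-map and a contraction for $M$ large and $V$ small, regardless of whether the eigenvalues are real.

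A remaining subtlety is that when the discriminant is negative, $\lambda_\pm$ and $P$ are genuinely complex, so the unique fixed point of $\hat \Psi$ lives \emph{a priori} in a complex function space. To show $r$ and $\rho$ are in fact real-valued I would use Lemma \ref{ImFk} (bounding $| \Im \hat F_k | \leqq C v ( | r_{{}_I} | + | \rho_{{}_I} | )$, with $\hat F_5$ handled through the analyticity Lemma \ref{Imf}) together with the Gronwall-type identity obtained by multiplying the equation for $r_{{}_I} = \Im r$ by $2 r_{{}_I}$; this forces $r_{{}_I} \equiv \rho_{{}_I} \equiv 0$. Undoing the change of variables and the rotation then produces the required real local solution $\vecx$ to \pref{equationx} and \pref{initialcondition-ii}, and uniqueness descends from uniqueness of the fixed point of $\hat \Psi$.
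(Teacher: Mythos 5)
Your proposal is correct and follows essentially the same route as the paper: the rotation to $\vecu$, the substitution $q = u'/v'$, the centered/averaged variables $r,\rho$, the diagonalization of the $2\times 2$ matrix to handle the spectral obstruction $\gamma(n-2)/(2(n-1)) > 1$ for Types IV--V, the contraction in $X_{V,M}\times X_{V,M}$, and the Gronwall argument with Lemmas \ref{ImFk} and \ref{Imf} to show the fixed point is real-valued. No meaningful departures from the paper's proof.
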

\section{Appendix}
\subsection{The values of $ \theta_i $}
\begin{fact}
If $ A ( \theta_i ) = 0 $,
$ \phi_i < \theta_i < \phi_{ i+1 } $,
then
\begin{description}
\item[{\rm Type II:}]
	$ \theta_0 = \arctan\sqrt{ \frac { n_0 } { n_1 } } $,
\item[{\rm Type III:}]
	$ \theta_i = \frac 12 ( \phi_i + \phi_{ i+1 } ) $,
\item[{\rm Type IV:}]
	$ \theta_{\pm 1 } = - \frac 12 \arctan \sqrt{ \frac k \ell } $,
	$ \theta_0 = \frac 12 \arctan \sqrt{ \frac k \ell } $,
\item[{\rm Type V:}]
	$ \theta_i = \frac 12 ( \phi_i + \phi_{ i+1 } ) $.
\end{description}
\label{thetai}
\end{fact}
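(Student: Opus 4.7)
The strategy is to reduce $A(\theta)=\sum_{j\in J} n_j\cot(\theta-\phi_j)$ to a single trigonometric function in each type by exploiting the very regular arithmetic progression structure of the $\phi_j$'s and the partial symmetry of the weights $n_j$, then locate its zero inside $(\phi_i,\phi_{i+1})$ using the monotonicity $A'<0$ already established in the proof of Proposition~\ref{prop4.1}. The main tool is the classical cotangent summation identity
\[
	\sum_{k=0}^{N-1}\cot\!\Bigl(x+\frac{k\pi}{N}\Bigr)=N\cot(Nx),
\]
together with its immediate consequence $\cot x-\tan x=2\cot(2x)$ (the $N=2$ case after shifting).

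For Type II ($J=\{0,1\}$ with distinct weights) no identity is needed; I would just compute $A(\theta)=n_0\cot\theta-n_1\tan\theta$, set this to zero to obtain $\tan^2\theta=n_0/n_1$, and pick the root in $(0,\pi/2)$. For Type III the six values $\phi_j=j\pi/3$ for $j\in\{-1,0,1\}$ and the common weight $n_j\equiv n$ allow a direct application of the identity: after the shift $y=\theta-\pi/3$ the three terms become $\cot(y)+\cot(y+\pi/3)+\cot(y+2\pi/3)=3\cot(3y)$, so $A(\theta)=3n\cot(3\theta)$, whose zeros $\theta=\pi/6+k\pi/3$ are precisely the midpoints $\tfrac12(\phi_i+\phi_{i+1})$. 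For Type V the same maneuver with $N=6$ and $y=\theta+\pi/3$ collapses the whole sum to $A(\theta)=6n\cot(6\theta)$, and the zeros are again the midpoints $\pi/12+k\pi/6$.

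Type IV is the only case in which two different weights $k,\ell$ coexist, so the global identity cannot be invoked in one shot. The plan is to pair indices by the $\pi/2$-translation symmetry that their common weight respects: $\{j=0,\,j=2\}$ contributes $k[\cot\theta+\cot(\theta-\pi/2)]=2k\cot(2\theta)$, while $\{j=-1,\,j=1\}$ contributes $\ell[\cot(\theta+\pi/4)+\cot(\theta-\pi/4)]$, which via the $N=2$ identity (after shifting by $\pi/4$) equals $2\ell\cot(2\theta-\pi/2)=-2\ell\tan(2\theta)$. Thus $A(\theta)=2k\cot(2\theta)-2\ell\tan(2\theta)$, and $A(\theta)=0$ gives $\tan^2(2\theta)=k/\ell$; choosing the root in the prescribed interval yields $\theta_i=\pm\tfrac12\arctan\sqrt{k/\ell}$ according to the sign of the interval.

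The computations are almost mechanical once the right pairings are made, so there is no real obstacle; the only point requiring a little care is verifying in each case that the particular root landing in $(\phi_i,\phi_{i+1})$ is indeed the one written in the statement, which follows immediately from the monotonicity of $\cot$ and $\tan$ on the relevant subintervals together with the uniqueness of $\theta_i$ in $(\phi_i,\phi_{i+1})$ proved in Section~1. This uniqueness also absolves us from worrying about spurious extraneous roots introduced by squaring in Types II and IV.
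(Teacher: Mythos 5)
Your proposal is correct and takes a genuinely different route from the paper. The paper only uses the pairing identity
\[
	\cot(\theta-\phi_{-j})+\cot(\theta-\phi_j)
	=\frac{\sin 2\theta}{\sin^2\theta\cos^2\phi_j-\cos^2\theta\sin^2\phi_j}
\]
together with $\cot\theta+\cot(\theta-\tfrac\pi2)=2\cot 2\theta$, then collects the resulting rational expressions over a common denominator and reads off the zeros from an explicit factorization of the numerator (e.g.\ for Type~III the numerator becomes $3n_0\cos\theta(2\sin\theta-1)(2\sin\theta+1)$, for Type~V it becomes $6n_0\cos 2\theta(1-2\sin 2\theta)(1+2\sin 2\theta)$). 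You instead invoke the full multi-term identity $\sum_{k=0}^{N-1}\cot(x+k\pi/N)=N\cot(Nx)$, which for the symmetric Types~III and~V collapses the whole sum in one stroke to $A(\theta)=3n_0\cot 3\theta$ and $A(\theta)=6n_0\cot 6\theta$, making the midpoint formula $\theta_i=\tfrac12(\phi_i+\phi_{i+1})$ transparent; for Type~IV the two-index pairing gives $A(\theta)=2k\cot 2\theta-2\ell\tan 2\theta$, the same final form as the paper. Your route is shorter and makes the structural reason for the answer (equal weights plus arithmetic progression of the $\phi_j$) visible, at the cost of relying on a slightly less elementary (though entirely classical) identity; the paper's route is more hands-on and self-contained, and its intermediate factorizations are what it reuses later to verify $\sum_j n_j\cot^2(\theta_i-\phi_j)=\gamma(n-2)$ in the derivation of~\pref{equationrrho1}, so the extra computation is not wasted there. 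One small caveat: in Type~IV the root in $(\phi_1,\phi_2)=(\tfrac\pi4,\tfrac\pi2)$ is $\tfrac\pi2-\tfrac12\arctan\sqrt{k/\ell}$, not literally $-\tfrac12\arctan\sqrt{k/\ell}$; the paper's statement $\theta_{\pm1}=-\tfrac12\arctan\sqrt{k/\ell}$ is best read modulo the $\tfrac\pi2$-periodicity of $2k\cot 2\theta-2\ell\tan 2\theta$, and your derivation handles this correctly by appealing to uniqueness of the zero in each subinterval.
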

\begin{proof}
For Type II:
Since $ J = \{ 0 , 1 \} $,
$ \phi_j = \frac j2 \pi $,
we have
\[
	0
	=
	\sum_{ j \in J } n_j \cot ( \theta_0 - \phi_j )
	=
	n_0 \cot \theta_0 + n_1 \cot \left( \theta_0 - \frac \pi 2 \right)
	=
	n_0 \cot \theta_0 - n_1 \tan \theta_0 .
\]
Combining this with $ \theta_0 \in ( 0 , \frac \pi 2 ) $,
we get the assertion.
\par
Next we deal with Types III--V.
When $ \pm j \in J $,
we have $ \phi_{ -j } = - \phi_j $,
$ n_{ -j } = n_j $.
Therefore,
it holds that
\[
	\cot ( \theta_i - \phi_{-j} ) + \cot ( \theta_i - \phi_j )
	=
	\frac
	{ \sin 2 \theta_i }
	{ \sin^2 \theta_i \cos^2 \phi_j - \cos^2 \theta_i \sin^2 \phi_j }
	.
\]
Furthermore $ - \max J \not \in J $,
$ n_0 = n_{\max J} $,
$ \phi_0 = 0 $ and $ \phi_{\max J} = \frac \pi 2 $ for Types IV and V.
For these cases
\[
	\cot ( \theta_i - \phi_0 ) + \cot ( \theta_i - \phi_{\max J} )
	=
	2 \cot 2 \theta_i .
\]
\par
Using these relations we have
\[
	\begin{array}{rl}
	0
	= & \zume
	\displaystyle{
	\sum_{ j \in J } n_j \cot ( \theta_i - \phi_j )
	=
	n_0 \sum_{ j = -1 }^1 \cot \left( \theta_i - \frac j3 \pi \right)
	}
	\\
	= & \zume
	\displaystyle{
	n_0
	\left[
	\left\{
	\cot \left( \theta_i + \frac \pi 3 \right)
	+
	\cot \left( \theta_i - \frac \pi 3 \right)
	\right\}
	+
	\cot \theta_i
	\right]
	}
	\\
	= & \zume
	\displaystyle{
	\frac
	{ 3 n_0 \cos \theta_i \left( 2 \sin \theta_i - 1 \right) \left( 2 \sin \theta_i + 1 \right) }
	{ \sin \theta_i \left( \sin^2 \theta_i - 3 \cos^2 \theta_i \right) }
	}
	\end{array}
\]
for Type III;
\[
	\begin{array}{rl}
	0 = & \zume
	\displaystyle{
	\sum_{ j \in J } n_j \cot ( \theta_i - \phi_j )
	}
	\\
	= & \zume
	\displaystyle{
	\ell
	\left\{
	\cot \left( \theta_i + \frac \pi 4 \right) + \cot \left( \theta_i - \frac \pi 4 \right)
	\right\}
	+
	k \left( \cot \theta_i - \tan \theta_i \right)
	}
	\\
	= & \zume
	- 2 \ell \tan 2 \theta_i + 2 k \cot 2 \theta_i
	\end{array}
\]
for Type IV;
\[
	\begin{array}{rl}
	0 = & \zume
	\displaystyle{
	\sum_{ j \in J } n_j \cot ( \theta_i - \phi_j )
	=
	n_0 \sum_{ j=-2 }^3 \cot \left( \theta_i - \frac j6 \pi \right)
	}
	\\
	= & \zume
	\displaystyle{
	n_0
	\left[
	\left\{
	\cot \left( \theta_i + \frac \pi 3 \right)
	+
	\cot \left( \theta_i - \frac \pi 3 \right)
	\right\}
	+
	\left\{
	\cot \left( \theta_i + \frac \pi 6 \right)
	+
	\cot \left( \theta_i - \frac \pi 6 \right)
	\right\}
	\right.
	}
	\\
	& \qquad
	\displaystyle{
	\left.
	+ \,
	\left\{
	\cot \theta_i
	+
	\cot \left( \theta_i - \frac \pi 2 \right)
	\right\}
	\right]
	}
	\\
	= & \zume
	\displaystyle{
	\frac { 6 n_0 \cos 2 \theta_i \left( 1 - 2 \sin 2 \theta_i \right) \left( 1 + 2 \sin 2 \theta_i \right) }
	{ \sin 2 \theta_i \left( \sin^2 \theta_i - 3 \cos^2 \theta_i \right)
	\left( 3 \sin^2 \theta_i - \cos^2 \theta_i \right) }
	}
	\end{array}
\]
for Type V.
Taking $ \theta_i \in ( \phi_i , \phi_{ i+1 } ) $ into consideration,
we have the assertion.
\qed
\end{proof}
\begin{rem}
The result $ \theta_i = \frac 12 ( \phi_i + \phi_{ i+1 } ) $ for Types III and V is by virtue of the symmetry $ n_j \equiv n_0 $.
\end{rem}
\subsection{The derivation of \pref{equationrrho1}}
\par
We insert $ r = q - \cot ( \theta_i - \phi_i ) $ into \pref{diffeqq1} with $ u(0) = 0 $.
It is trivial that
\[
	\frac { dq } { dv }
	= \frac { dr } { dv } ,
\]
\[
	( n - 1 )
	\left( q^2 + 1 \right)^{ \frac 32 } \tilde H
	=
	( n - 1 )
	\left[ \left\{ \cot ( \theta_i - \phi_i ) + r \right\}^2 + 1 \right]^{ \frac 32 } 
	\tilde H
	=
	\frac { F_5 (r) } v .
\]
The summation of remainder terms is
\[
	\begin{array}{l}
	\displaystyle{
	- \frac { n_i \left( q^2 + 1 \right) q } v
	+
	\sum_{ j \ne i }
	\frac
	{ n_j \left( q^2 + 1 \right)
	\left\{ q \cos ( \phi_j - \phi_i ) + \sin ( \phi_j - \phi_i ) \right\}
	}
	{ \displaystyle{
	\sin ( \phi_j - \phi_i ) \int_0^v q( \eta ) \, d \eta
	- v \cos ( \phi_j - \phi_i )
	} }
	}
	\\
	\quad
	=
	\displaystyle{
	\left( q^2 + 1 \right)
	\sum_{ j \in J }
	\frac
	{ n_j
	\left\{ q \cos ( \phi_j - \phi_i ) + \sin ( \phi_j - \phi_i ) \right\}
	}
	{ \displaystyle{
	\sin ( \phi_j - \phi_i ) \int_0^v q( \eta ) \, d \eta
	- v \cos ( \phi_j - \phi_i )
	} }
	}
	\\
	\quad
	=
	\displaystyle{
	\left[
	\left\{ \cot ( \theta_i - \phi_i ) + r \right\}^2 + 1
	\right]
	}
	\\
	\quad \qquad
	\displaystyle{
	\times
	\sum_{ j \in J }
	\frac
	{ n_j \left[
	\left\{ \cot ( \theta_i - \phi_i ) + r \right\} \cos ( \phi_j - \phi_i )
	+ \sin ( \phi_j - \phi_i )
	\right] }
	{ \displaystyle{
	\sin ( \phi_j - \phi_i )
	\left\{ v \cot ( \theta_i - \phi_i ) + \int_0^v r( \eta ) \, d \eta \right\}
	- v \cos ( \phi_j - \phi_i )
	} }
	}
	\\
	\quad
	=
	\displaystyle{
	-
	\left\{
	\cot^2 ( \theta_i - \phi_i )
	+ 2 r \cot ( \theta_i - \phi_i )
	+ r^2 + 1
	\right\}
	}
	\\
	\quad \qquad
	\displaystyle{
	\times
	\sum_{ j \in J }
	\frac
	{ n_j \left\{
	\cos ( \theta_i - \phi_j ) + r \cos ( \phi_j - \phi_i ) \sin ( \theta_i - \phi_i )
	\right\} }
	{ \displaystyle{
	v \left\{
	\sin ( \theta_i - \phi_j )
	- \rho \sin ( \phi_j - \phi_i ) \sin ( \theta_i - \phi_i )
	\right\}
	} }
	}
	.
	\end{array}
\]
Using $ \displaystyle{ \sum_{ j \in J } n_j \cot ( \theta_i - \phi_j ) = 0 } $,
we have
\[
	\begin{array}{l}
	\displaystyle{
	-
	\left\{
	\cot^2 ( \theta_i - \phi_i )
	+ 2 r \cot ( \theta_i - \phi_i )
	+ r^2 + 1
	\right\}
	}
	\\
	\quad \qquad
	\displaystyle{
	\times
	\sum_{ j \in J }
	\frac
	{ n_j \left\{
	\cos ( \theta_i - \phi_j ) + r \cos ( \phi_j - \phi_i ) \sin ( \theta_i - \phi_i )
	\right\} }
	{ \displaystyle{
	v \left\{
	\sin ( \theta_i - \phi_j )
	- \rho \sin ( \phi_j - \phi_i ) \sin ( \theta_i - \phi_i )
	\right\}
	} }
	}
	\\
	\quad
	=
	\displaystyle{
	-
	\left\{
	r^2 + 2 r \cot ( \theta_i - \phi_i ) + \mathrm{cosec}^2 ( \theta_i - \phi_i )
	\right\}
	}
	\\
	\quad \qquad
	\displaystyle{
	\times
	\sum_{ j \in J }
	\frac { n_j } v
	\left\{
	\frac
	{ \cos ( \theta_i - \phi_j ) + r \cos ( \phi_j - \phi_i ) \sin ( \theta_i - \phi_i ) }
	{ \displaystyle{
	\sin ( \theta_i - \phi_j )
	- \rho \sin ( \phi_j - \phi_i ) \sin ( \theta_i - \phi_i )
	} }
	-
	\frac { \cos ( \theta_i - \phi_j ) } { \sin ( \theta_i - \phi_j ) }
	\right\}
	}
	\\
	\quad
	=
	\displaystyle{
	-
	\left\{
	r^2 + 2 r \cot ( \theta_i - \phi_i ) + \mathrm{cosec}^2 ( \theta_i - \phi_i )
	\right\}
	}
	\\
	\quad \qquad
	\displaystyle{
	\times
	\sum_{ j \in J }
	\frac
	{ \displaystyle{
	n_j \sin ( \theta_i - \phi_i )
	\left\{
	r \cos ( \phi_j - \phi_i ) \sin ( \theta_i - \phi_j )
	+
	\rho \sin ( \phi_j - \phi_i ) \cos ( \theta_i - \phi_j )
	\right\}
	} }
	{ \displaystyle{
	v \sin ( \theta_i - \phi_j )
	\left\{
	\sin ( \theta_i - \phi_j ) - \rho \sin ( \phi_j - \phi_i ) \sin ( \theta_i - \phi_i )
	\right\}
	} }
	}
	.
	\end{array}
\]
Extracting the linear parts with respect to $ r $ and $ \rho $,
we get
\[
	\begin{array}{l}
	\displaystyle{
	-
	\left\{
	r^2 + 2 r \cot ( \theta_i - \phi_i ) + \mathrm{cosec}^2 ( \theta_i - \phi_i )
	\right\}
	}
	\\
	\quad \qquad
	\displaystyle{
	\times
	\sum_{ j \in J }
	\frac
	{ \displaystyle{
	n_j \sin ( \theta_i - \phi_i )
	\left\{
	r \cos ( \phi_j - \phi_i ) \sin ( \theta_i - \phi_j )
	+
	\rho \sin ( \phi_j - \phi_i ) \cos ( \theta_i - \phi_j )
	\right\}
	} }
	{ \displaystyle{
	v \sin ( \theta_i - \phi_j )
	\left\{
	\sin ( \theta_i - \phi_j ) - \rho \sin ( \phi_j - \phi_i ) \sin ( \theta_i - \phi_i )
	\right\}
	} }
	}
	\\
	\quad
	=
	\displaystyle{
	-
	\left\{
	r^2 + 2 r \cot ( \theta_i - \phi_i ) + \mathrm{cosec}^2 ( \theta_i - \phi_i )
	\right\}
	}
	\\
	\quad \qquad
	\displaystyle{
	\times
	\sum_{ j \in J }
	\frac { n_j \sin ( \theta_i - \phi_i ) } v
	\left[
	\frac
	{ r \cos ( \phi_j - \phi_i ) }
	{ \sin ( \theta_i - \phi_j ) }
	+
	\frac { \rho \sin ( \phi_j - \phi_i) \cos ( \theta_i - \phi_j ) }
	{ \sin^2 ( \theta_i - \phi_j ) }
	\right.
	}
	\\
	\qquad \qquad
	\displaystyle{
	+ \,
	\frac
	{ \displaystyle{
	\left\{
	r \cos ( \phi_j - \phi_i ) \sin ( \theta_i - \phi_j )
	+
	\rho \sin ( \phi_j - \phi_i ) \cos ( \theta_i - \phi_j )
	\right\}
	} }
	{ \sin ( \theta_i - \phi_j ) }
	}
	\\
	\qquad \qquad \quad
	\displaystyle{
	\left.
	\times
	\left\{
	\frac 1
	{ \displaystyle{
	\sin ( \theta_i - \phi_j ) - \rho \sin ( \phi_j - \phi_i ) \sin ( \theta_i - \phi_i )
	} }
	-
	\frac 1
	{ \sin ( \theta_i - \phi_j ) }
	\right\}
	\right]
	}
	\\
	\quad
	=
	\displaystyle{
	- r
	\sum_{ j \in J }
	\frac { n_j \cos ( \phi_j - \phi_i ) }
	{ v \sin ( \theta_i - \phi_i ) \sin ( \theta_i - \phi_j ) }
	}
	\\
	\quad \qquad
	\displaystyle{
	- \,
	r^2 \left\{ r + 2 \cot ( \theta_i - \phi_i ) \right\}
	\sum_{ j \in J }
	\frac
	{ n_j \sin ( \theta_i - \phi_i ) \cos ( \phi_j - \phi_i ) }
	{ v \sin ( \theta_i - \phi_j ) }
	}
	\\
	\quad \qquad
	\displaystyle{
	- \,
	\sum_{ j \in J }
	\frac
	{ n_j \rho \sin ( \phi_j - \phi_i ) \cos ( \theta_i - \phi_j ) }
	{ v \sin ( \theta_i - \phi_i ) \sin^2 ( \theta_i - \phi_j ) }
	}
	\\
	\quad
	\qquad
	\displaystyle{
	- \,
	r \left\{ r + 2 \cot ( \theta_i - \phi_i ) \right\}
	\sum_{ j \in J }
	\frac { n_j \rho \sin ( \theta_i - \phi_i ) \sin ( \phi_j - \phi_i ) \cos ( \theta_i - \phi_j ) }
	{ v \sin^2 ( \theta_i - \phi_j ) }
	}
	\\
	\quad \qquad
	\displaystyle{
	- \,
	\left\{
	r^2 + 2 r \cot ( \theta_i - \phi_i ) + \mathrm{cosec}^2 ( \theta_i - \phi_i )
	\right\}
	}
	\\
	\qquad \qquad
	\displaystyle{
	\times
	\sum_{ j \in J }
	\frac
	{ n_j r \rho \sin^2 ( \theta_i - \phi_i ) \cos ( \phi_j - \phi_i ) \sin ( \phi_j - \phi_i ) }
	{ \displaystyle{
	v \sin ( \theta_i - \phi_j )
	\left\{
	\sin ( \theta_i - \phi_j ) - \rho \sin ( \phi_j - \phi_i ) \sin ( \theta_i - \phi_i )
	\right\}
	} }
	}
	\\
	\quad \qquad
	\displaystyle{
	- \,
	\left\{
	r^2 + 2 r \cot ( \theta_i - \phi_i ) + \mathrm{cosec}^2 ( \theta_i - \phi_i )
	\right\}
	}
	\\
	\qquad \qquad
	\displaystyle{
	\times
	\sum_{ j \in J }
	\frac
	{ n_j \rho^2 \sin^2 ( \theta_i - \phi_i ) \sin^2 ( \phi_j - \phi_i ) \cos ( \theta_i - \phi_j ) }
	{ \displaystyle{
	v \sin^2 ( \theta_i - \phi_j )
	\left\{
	\sin ( \theta_i - \phi_j ) - \rho \sin ( \phi_j - \phi_i ) \sin ( \theta_i - \phi_i )
	\right\}
	} }
	}
	.
	\end{array}
\]
By using $ \displaystyle{ \sum_{ j \in J } n_j \cot ( \theta_i - \phi_j ) = 0 } $ again,
the coefficient of the linear terms are simplified as follows:
\[
	\begin{array}{l}
	\displaystyle{
	\sum_{ j \in J }
	\frac { n_j \cos ( \phi_j - \phi_i ) }
	{ \sin ( \theta_i - \phi_i ) \sin ( \theta_i - \phi_j ) }
	}
	=
	\displaystyle{
	\sum_{ j \in J }
	\frac
	{ n_j \cos \left\{ ( \phi_j - \theta_i ) + ( \theta_i - \phi_i ) \right\} }
	{ \sin ( \theta_i - \phi_i ) \sin ( \theta_i - \phi_j ) }
	}
	\\
	\quad
	=
	\displaystyle{
	\sum_{ j \in J }
	\frac
	{ n_j
	\left\{
	\cos ( \phi_j - \theta_i ) \cos ( \theta_i - \phi_i )
	-
	\sin ( \phi_j - \theta_i ) \sin ( \theta_i - \phi_i )
	\right\} }
	{ \sin ( \theta_i - \phi_i ) \sin ( \theta_i - \phi_j ) }
	}
	\\
	\quad
	=
	\displaystyle{
	\cot ( \theta_i - \phi_i )
	\sum_{ j \in J }
	n_j \cot ( \theta_i - \phi_j )
	+
	\sum_{ j \in J } n_j
	=
	\sum_{ j \in J } n_j
	= n - 2
	}
	,
	\end{array}
\]
\[
	\begin{array}{l}
	\displaystyle{
	\sum_{ j \in J }
	\frac { n_j \sin ( \phi_j - \phi_i ) \cos ( \theta_i - \phi_j ) }
	{ \sin ( \theta_i - \phi_i ) \sin^2 ( \theta_i - \phi_j ) }
	}
	=
	\displaystyle{
	\sum_{ j \in J }
	\frac
	{ n_j \sin \left\{ ( \phi_j - \theta_i ) + ( \theta_i - \phi_i ) \right\} \cos ( \theta_i - \phi_j ) }
	{ \sin ( \theta_i - \phi_i ) \sin^2 ( \theta_i - \phi_j ) }
	}
	\\
	\quad
	=
	\displaystyle{
	\sum_{ j \in J }
	\frac
	{ n_j
	\left\{
	\sin ( \phi_j - \theta_i ) \cos ( \theta_i - \phi_i )
	+
	\cos ( \phi_j - \theta_i ) \sin ( \theta_i - \phi_i )
	\right\}
	\cos ( \theta_i - \phi_j ) }
	{ \sin ( \theta_i - \phi_i ) \sin^2 ( \theta_i - \phi_j ) }
	}
	\\
	\quad
	=
	\displaystyle{
	-
	\cot ( \theta_i - \phi_i )
	\sum_{ j \in J }
	n_j \cot ( \theta_i - \phi_j )
	+
	\sum_{ j \in J }
	n_j \cot^2 ( \theta_i - \phi_j )
	}
	\\
	\quad
	=
	\displaystyle{
	\sum_{ j \in J }
	n_j \cot^2 ( \theta_i - \phi_j )
	.
	}
	\end{array}
\]
Consequently,
we get \pref{equationrrho1} if
\[
	\sum_{ j \in J } n_j \cot^2 ( \theta_i - \phi_j ) = \gamma ( n - 2 ) ,
\]
which we shall prove next.
To do this,
we use Fact \ref{thetai}.
\par
Type II:
Since $ J = \{ 0 , 1 \} $,
$ \gamma = \sharp J - 1 = 1 $,
$ \phi_j = \frac j2 \pi $,
and $ \tan^2 \theta_0 = \frac { n_0 } { n_1 } $,
we get
\[
	\begin{array}{rl}
	\displaystyle{
	\sum_{ j \in J } n_j \cot^2 ( \theta_0 - \phi_j )
	}
	= & \zume
	\displaystyle{
	n_0 \cot^2 \theta_0 + n_1 \cot^2 \left( \theta_0 - \frac \pi 2 \right)
	=
	n_0 \cot^2 \theta_0 + n_1 \tan^2 \theta_0
	}
	\\
	= & \zume
	\displaystyle{
	n_0 \cdot \frac { n_1 } { n_0 }
	+
	n_1 \cdot \frac { n_0 } { n_1 }
	=
	n_1 + n_0
	=
	\sum_{ j \in J } n_j
	=
	\gamma ( n - 2 ) .
	}
	\end{array}
\]
\par
As in the proof of Fact \ref{thetai},
we have the following for Types III--V.
When $ \pm j \in J $,
we have $ \phi_{ -j } = - \phi_j $,
$ n_{ -j } = n_j $.
Therefore,
it holds that
\[
	\cot^2 ( \theta_i - \phi_{-j} ) + \cot^2 ( \theta_i - \phi_j )
	=
	\frac
	{
	2 \left( \sin^2 \theta_i \cos^2 \theta_i + \sin^2 \phi_j \cos^2 \phi_j \right)
	}
	{
	\left(
	\sin^2 \theta_i \cos^2 \phi_j - \cos^2 \theta_i \sin^2 \phi_j
	\right)^2
	}
	.
\]
Furthermore $ - \max J \not \in J $,
$ n_0 = n_{\max J} $,
$ \phi_0 = 0 $ and $ \phi_{\max J} = \frac \pi 2 $ for Types IV and V.
For these cases
\[
	\cot^2 ( \theta_i - \phi_0 ) + \cot^2 ( \theta_i - \phi_{\max J} )
	=
	\frac { 2 ( 2 - \sin^2 2 \theta_i ) } { \sin^2 2 \theta_i }
\]
\par
Type III:
Since $ J = \{ -1 , 0 , 1 \} $,
$ \gamma = \sharp J - 1 = 2 $,
$ \phi_j = \frac j3 \pi $,
$ n_j \equiv n_0 $,
and $ \sin^2 \theta_i = \frac 14 $,
$ \cos^2 \theta_i = \frac 34 $ for all $ i \in J $,
we have
\[
	\begin{array}{l}
	\displaystyle{
	\sum_{ j \in J } n_j \cot^2 ( \theta_i - \phi_j )
	=
	n_0
	\left\{
	\cot^2 ( \theta_i - \phi_{-1} + \cot^2 ( \theta_i - \phi_1 ) + \cot^2 \theta_i
	\right\}
	}
	\\
	\quad
	=
	\displaystyle{
	n_0 \left\{
	\frac
	{ 2 \left( \sin^2 \theta_i \cos^2 \theta_i + \sin^2 \phi_1 \cos^2 \phi_1 \right) }
	{ \left( \sin^2 \theta_i \cos^2 \phi_1 - \cos^2 \theta_i \sin^2 \phi_1 \right)^2 }
	+
	\frac { \cos^2 \theta_i } { \sin^2 \theta_i }
	\right\}
	}
	\\
	\quad
	=
	\displaystyle{
	n_0
	\left\{
	\frac { 2 \left( \frac 14 \cdot \frac 34 + \frac 34 \cdot \frac 14 \right) }
	{ \left( \frac 14 \cdot \frac 14 - \frac 34 \cdot \frac 34 \right)^2 }
	+
	\frac { \frac 34 } { \frac 14 }
	\right\}
	}
	\\
	\quad
	=
	\displaystyle{
	n_0
	\left(
	\frac { \frac 34 } { \frac 14 } + 3
	\right)
	=
	6 n_0
	=
	2 \sum_{ j \in J } n_j
	=
	\gamma ( n - 2 ) .
	}
	\end{array}
\]
\par
Type IV:
Since $ J = \{ - 1 , 0 , 1 , 2 \} $,
$ \gamma = \sharp J - 1 = 3 $,
$ \phi_j = \frac j4 \pi $,
$ n_{-1} = n_1 = \ell $,
$ n_0 = n_2 = k $,
and
\[
	\sin^2 2 \theta_i = \frac k { k + \ell } ,
	\quad
	\cos^2 2 \theta_i = \frac \ell { k + \ell }
\]
for all $ i \in J $,
we have
\[
	\begin{array}{l}
	\displaystyle{
	\sum_{ j \in J } n_j \cot^2 ( \theta_i - \phi_j )
	}
	\\
	\quad
	=
	\displaystyle{
	\ell \left\{
	\cot^2 \left( \theta_i + \frac \pi 4 \right) + \cot^2 \left( \theta_i - \frac \pi 4 \right)
	\right\}
	+
	k \left\{
	\cot^2 \theta_i + \cot^2 \left( \theta_i - \frac \pi 2 \right)
	\right\}
	}
	\\
	\quad
	=
	\displaystyle{
	\frac
	{ 2 \ell \left(
	\sin^2 \theta_i \cos^2 \theta_i + \sin^2 \frac \pi 4 \cos^2 \frac \pi 4
	\right) }
	{ \left(
	\sin^2 \theta_i \cos^2 \frac \pi 4 - \cos^2 \theta_i \sin^2 \frac \pi 4
	\right)^2
	}
	+
	\frac
	{ 2 k \left( 2 - \sin^2 2 \theta_i \right) } { \sin^2 2 \theta_i }
	}
	\\
	\quad
	=
	\displaystyle{
	\frac
	{ 2 \ell \left(
	\sin^2 \theta_i \cos^2 \theta_i + \frac 14
	\right) }
	{ \frac 14 \left( \sin^2 \theta_i - \cos^2 \theta_i \right)^2 }
	+
	\frac
	{ 2 k \left( 1 + \cos^2 2 \theta_i \right) } { \sin^2 2 \theta_i }
	}
	\\
	\quad
	=
	\displaystyle{
	\frac
	{ 2 \ell \left( 1 + \sin^2 2 \theta_i \right) } { \cos^2 2 \theta_i }
	+
	\frac
	{ 2 k \left( 1 + \cos^2 2 \theta_i \right) } { \sin^2 2 \theta_i }
	}
	\\
	\quad
	=
	\displaystyle{
	2 ( k + \ell + k ) + 2 ( k + \ell + \ell )
	=
	6 ( k + \ell )
	=
	3 \sum_{ j \in J } n_j
	= \gamma ( n - 2 ) .
	}
	\end{array}
\]
\par
Type V:
Since $ J = \{ -2 , -1 , 0 , 1 , 2 , 3 \} $,
and $ n_j \equiv n_0 $,
we have
\[
	\begin{array}{l}
	\displaystyle{
	\sum_{ j \in J } n_j \cot^2 ( \theta_i - \phi_j )
	}
	\\
	\quad
	=
	\displaystyle{
	n_0
	\left[
	\sum_{ j=1 }^2
	\left\{
	\cot^2 ( \theta_i - \phi_{-j} ) + \cot^2 ( \theta_i - \phi_j )
	\right\}
	+
	\cot^2 \theta_i + \cot^2 ( \theta_i - \phi_3 )
	\right]
	}
	\\
	\quad
	=
	\displaystyle{
	n_0
	\left\{
	\sum_{ j=1 }^2
	\frac
	{ 2 \left( \sin^2 \theta_i \cos^2 \theta_i + \sin^2 \phi_j \cos^2 \phi_j \right) }
	{ \left( \sin^2 \theta_i \cos^2 \phi_j - \cos^2 \theta_i \sin^2 \phi_j \right)^2 }
	+
	\frac
	{ 2 \left( 2 - \sin^2 2 \theta_i \right) } { \sin^2 2 \theta_i }
	\right\}
	}
	.
	\end{array}
\]
When $ i = - 2 $,
$ 1 $,
it holds that
\[
	\sin^2 \theta_i = \cos^2 \theta_i = \frac 12 ,
	\quad
	\sin^2 2 \theta_i = 1 .
\]
Using $ \gamma = \sharp J - 1 = 5 $ and $ \phi_j =\frac j6 \pi $,
we get
\[
	\begin{array}{rl}
	\displaystyle{
	\sum_{ j \in J } n_j \cot^2 ( \theta_i - \phi_i )
	}
	= & \zume
	\displaystyle{
	n_0
	\left\{
	\sum_{ j=1 }^2
	\frac { 2 \left( \frac 14 + \sin^2 \phi_j \cos^2 \phi_j \right) }
	{ \frac 14 \left( \cos^2 \phi_j - \sin^2 \phi_j \right)^2 }
	+ 2 ( 2 - 1 )
	\right\}
	}
	\\
	= & \zume
	\displaystyle{
	n_0
	\left\{
	\sum_{ j=1 }^2
	\frac { 2 \left( 2 + \sin^2 2 \phi_j \right) } { \cos^2 2 \phi_j }
	+ 2
	\right\}
	}
	\\
	= & \zume
	\displaystyle{
	n_0
	\left\{
	\frac { 2 \left( 1 + \sin^2 \frac \pi 3 \right) } { \cos^2 \frac \pi 3 }
	+
	\frac { 2 \left( 1 + \sin^2 \frac 23 \pi \right) } { \cos^2 \frac 23 \pi }
	+
	2
	\right\}
	}
	\\
	= & \zume
	\displaystyle{
	n_0
	\left\{
	\frac { 2 \left( 1 + \frac 34 \right) } { \frac 14 }
	+
	\frac { 2 \left( 1 + \frac 34 \right) } { \frac 14 }
	+
	2
	\right\}
	}
	\\
	= & \zume
	\displaystyle{
	30 n_0
	= 5 \sum_{ j \in J } n_j
	= \gamma ( n - 2 ) .
	}
	\end{array}
\]
When $ i = -1 $,
$ 0 $,
$ 2 $,
it holds that
\[
	\sin^2 2 \theta_i = \frac 14 ,
	\quad
	\sin^2 \theta_i = \frac { 2 - \sqrt 3 } 4 ,
	\quad
	\cos^2 \theta_i = \frac { 2 + \sqrt 3 } 4 .
\]
Therefore,
we obtain
\[
	\begin{array}{l}
	\displaystyle{
	\sum_{ j \in J } n_j \cot^2 ( \theta_i - \phi_j )
	}
	\\
	\quad
	=
	\displaystyle{
	n_0
	\left[
	\sum_{ j=1 }^2
	\frac
	{ 2 \left(
	\frac { 2 - \sqrt 3 } 4 \cdot \frac { 2 + \sqrt 3 } 4
	+
	\sin^2 \phi_j \cos^2 \phi_j
	\right) }
	{ \frac 1 {16} \left\{
	\left( 2 - \sqrt 3 \right) \cos^2 \phi_j
	-
	\left( 2 + \sqrt 3 \right) \sin^2 \phi_j
	\right\}^2 }
	+
	\frac { 2 \left( 2 - \frac 14 \right) } { \frac 14 }
	\right]
	}
	\\
	\quad
	=
	\displaystyle{
	n_0
	\left\{
	\sum_{ j=1 }^2
	\frac
	{ 2 \left( 1 + 4 \sin^2 2 \phi_j \right) }
	{ \left( 2 \cos 2 \phi_j - \sqrt 3 \right)^2 }
	+
	\frac { 2 \cdot \frac 74 } { \frac 14 }
	\right\}
	}
	\\
	\quad
	=
	\displaystyle{
	n_0
	\left\{
	\frac { 2 \left( 1 + 4 \sin^2 \frac \pi 3 \right) }
	{ \left( 2 \cos \frac \pi 3 - \sqrt 3 \right)^2 }
	+
	\frac { 2 \left( 1 + 4 \sin^2 \frac 23 \pi \right) }
	{ \left( 2 \cos \frac 23 \pi - \sqrt 3 \right)^2 }
	+
	14
	\right\}
	}
	\\
	\quad
	=
	\displaystyle{
	n_0
	\left\{
	\frac { 2 ( 1 + 3 ) } { \left( 1 - \sqrt 3 \right)^2 }
	+
	\frac { 2 ( 1 + 3 ) } { \left( - 1 - \sqrt 3 \right)^2 }
	+
	14
	\right\}
	}
	\\
	\quad
	=
	\displaystyle{
	n_0
	\left(
	\frac 4 { 2 - \sqrt 3 } + \frac 4 { 2 + \sqrt 3 } + 14
	\right)
	}
	\\
	\quad
	=
	\displaystyle{
	n_0
	\left[
	4 \left\{ \left( 2 + \sqrt 3 \right) + \left( 2 - \sqrt 3 \right) \right\}
	+ 14
	\right]
	}
	\\
	\quad
	=
	\displaystyle{
	30 n_0
	= 5 \sum_{ j \in J } n_j
	= \gamma ( n - 2 )
	}
	\end{array}
\]

\end{document}